\definecolor{colorcita}{RGB}{21,86,130}
\definecolor{colorref}{RGB}{5,10,177}
\definecolor{colorweb}{RGB}{177,6,38}
\newtheorem{theorem}{Theorem}[section]
\newtheorem{proposition}[theorem]{Proposition}
\newtheorem{corollary}[theorem]{Corollary}
\newtheorem{lemma}[theorem]{Lemma}
\theoremstyle{definition}
\newtheorem{remark}[theorem]{Remark}
\theoremstyle{definition}
\theoremstyle{remark}
\newcommand{\dis}{\displaystyle}
\newcommand{\Jj}{\mathcal J}
\newcommand{\NN}{\mathbb N}
\newcommand{\zN}{\mathbb N}
\newcommand{\CC}{\mathbb C}
\newcommand{\RR}{\mathbb R}
\newcommand{\jj}{\mathbf{j}}
\newcommand{\ii}{\mathbf{i}}
\DeclareMathOperator{\mon}{mon}
\DeclareMathOperator{\card}{card}
\DeclareMathOperator{\id}{id}
\newcommand{\veps}{\varepsilon}
\newcommand{\Aa}{\mathcal{A}}
\newcommand{\Pp}{\mathcal{P}}
\newcommand{\Rr}{\mathcal{R}}
\newcommand{\Ff}{\mathcal{F}}
\newcommand\restrict[1]{\raisebox{-.5ex}{$\vert$}_{#1}}
\begin{document}
	\title{Monomial convergence on $\ell_r$}
	
	\author{Daniel Galicer \and Mart\'in Mansilla \and Santiago Muro \and Pablo Sevilla-Peris}

\thanks{This work was partially supported by projects CONICET PIP 11220130100329CO,   ANPCyT PICT 2015-2224,  ANPCyT PICT 2015-2299, ANPCyT PICT 2015 - 3085, UBACyT 20020130100474BA, UBACyT 20020130300052BA, UBACyT 20020130300057BA. The second author was supported by a CONICET doctoral fellowship. The fourth author was supported by \textsc{mineco} and \textsc{feder} Project MTM2017-83262-C2-1-P}

\begin{abstract}
For $1 < r \le 2$, we study the set of monomial convergence for spaces of holomorphic functions over $\ell_r$. For $ H_b(\ell_r)$, the space of  entire functions of bounded type in $\ell_r$,  we prove that $\mon H_b(\ell_r)$ is exactly the Marcinkiewicz sequence space $m_{\Psi_r}$ where the symbol $\Psi_r$ is given by $\Psi_r(n) := \log(n + 1)^{1 - \frac{1}{r}}$
for $n \in \NN_0$. 
For the space of $m$-homogeneous polynomials on $\ell_r$, we prove that the set of monomial convergence $\mon  \Pp (^m \ell_r)$ contains the sequence space $\ell_{q}$ where $q=(mr')'$. Moreover, we show that for any $q\leq s<\infty$, the Lorentz sequence space $\ell_{q,s}$ lies in $\mon  \Pp (^m \ell_r)$, provided that $m$ is large enough.
We apply our results to make an advance in the description of the  set of monomial convergence of $H_{\infty}(B_{\ell_r})$ (the space of bounded holomorphic on the unit ball of $\ell_r$). As a byproduct we close the gap on certain estimates related with the \emph{mixed} unconditionality constant for spaces of polynomials over classical sequence spaces.

\end{abstract}

\maketitle

\section{Introduction and main results}

A basic fact taught on every course of one complex variable is that every function that is differentiable at all points of a disc centred at $0$ can be represented as a power series, and vice-versa. In other words, the derivative $f'(z)$ exists (i.e. $f$ is differentiable at $z$) for every $\vert z \vert < r$ if and only if there is a sequence of coefficients $(c_{n}(f))_{n} \subseteq \mathbb{C}$ so that
\begin{equation} \label{franz}
f(z) = \sum_{n=0}^{\infty} c_{n}(f) z^{n}
\end{equation}
for every $\vert z \vert < r$ (i.e. it is analytic). In this case the coefficients can be computed either by differentiation or by the Cauchy integral formula, and the convergence is absolute and uniform on each compact subset of the disc. It also 
rather elementary to see that in fact this extends also to functions on several complex variables: a function defined on a Reinhardt domain $\mathcal{R} \subset \mathbb{C}^{n}$ (all needed definitions in this introduction can be found in Section~\ref{preliminares}), is differentiable at every $z \in \mathcal{R}$ if and only it is analytic (and has a power series representation as in \eqref{franz}). So, differentiability and analiticity are two equivalent ways to define holomorphy in one and several complex variables. \\

The idea of developing a sort of function theory in infinitely many variables (or, to put in nowadays terms, on infinite dimensional spaces) started at the beginning of the 20th century with the work, among others, 
of Hilbert, Fr\'echet and G\^ateaux. Here the problem becomes much more subtle. To begin with, while a notion such as differentiability can be considered for functions on any Banach space the idea of analiticiy, 
where one needs power expansions  with monomials of the form $z^{\alpha} = z_{1}^{\alpha_{1}} \cdots z_{n}^{\alpha_{n}}$, is much more restrictive. A Schauder basis, where an idea of 
`coordinate' makes sense, is at least needed. This shows that the approach to holomorphy through differentiability is much more far reaching than the one through analiticity. We say, then, that a
function $f : U \to \mathbb{C}$ (where $U$ is some open subset of a Banach space $X$) is holomorphic if it is Fr\'echet differentiable at every point of $U$ (or, equivalently, continuous and holomorphic when restricted to any one-dimensional affine subspace, see \cite{mujica1986,dineen1999complex}).\\
It is also worthy to explore the analytic approach whenever it makes sense  (as, for example Banach sequences spaces, the definition is given below). Let us succinctly explain how this works (a detailed account on this can be found in \cite[Chapter~15]{defant2019libro}). Let $f$ be a 
holomorphic function on some Reinhardt domain $\mathcal{R}$ in a Banach sequence space $X$. For each fixed $n$, the restriction of $f$ to $\mathcal{R}_{n} = \mathcal{R} \cap \mathbb{C}^{n}$ (which 
is a Reinhardt domain) is holomorphic and, therefore, has a monomial expansion with coefficients $(c_{\alpha}^{(n)}(f))_{\alpha  \in \mathbb{N}_{0}^{n}}$. It is easy to check that 
$c_\alpha^{(n)}=c_\alpha^{(n+1)}$ for $\alpha\in\mathbb N_0^n\subset \mathbb N_0^{n+1}$. In other words, we have a  a unique family $(c_\alpha(f))_{\alpha\in\mathbb N_0^{(\mathbb N)}}$, such that
\begin{equation} \label{savall}
f(z)=\sum_{\alpha\in\mathbb N_0^{(\mathbb N)}}c_\alpha z^\alpha
\end{equation}
for all $n\in\mathbb N$ and all $z\in \mathcal{R}_n$. The coefficients can be computed, for each $\alpha = (\alpha_{1} , \ldots , \alpha_{n},0,0, \ldots)$, by 
\begin{equation} \label{dowland}
     c_{\alpha}(f) = \frac{\partial^{\alpha}f(0)}{\alpha !} = \frac{1}{(2\pi i)^n} \int_{ \lbrace \vert z \vert = r \rbrace} \frac{f(z)}{z^{\alpha + 1}} dz,
\end{equation}
where $r>0$ such that $ \lbrace \vert z \vert \leq r \rbrace \subset \mathcal R$.
As usual, the power series $\sum_\alpha c_\alpha z^\alpha$ is called the monomial expansion of $f$.\\
One could expect that in the settings where these two approaches coexist  they are equivalent, just as in the finite dimensional setting. But this is not the case. When dealing with a totally different problem, related to the 
convergence of Dirichlet series, Toeplitz  gave in \cite{toeplitz1913ueber} an example that, to what we are concerned here, provided a holomorphic function on $c_{0}$ and a point in $c_{0}$ for which the monomial 
expansion does not converge absolutely. This shows that there are holomorphic functions that are not analytic (the converse, however, holds true: every analytic function is holomorphic).\\
Then the question arises in a natural way: for which $z$'s  does the monomial expansion of every holomorphic function converge absolutely? (note that when this is the case when the series converges to $f(z)$). 
From \eqref{savall} we have that this happens for every $z \in \mathcal{R}_{n}$ but, are there other ones? Ryan showed in \cite{ryan1980applications} that the monomial expansion of every 
holomorphic function on $\ell_{1}$ converges at every $z \in \ell_{1}$. Later Lempert in \cite{lempert1999dolbeault} proved that the monomial expansion of every holomorphic function on 
$\rho B_{\ell_{1}}$ (for $\rho >0$) converges at every $z \in \rho B_{\ell_{1}}$. This is a somewhat extremal case, where the analytic and differential approaches coincide. What happens in other spaces? or if 
we consider smaller families of holomorphic functions? To tackle this questions the set of monomial convergence of a family $\mathcal F(\mathcal{R})$ of holomorphic functions on $\mathcal{R}$ was defined in 
\cite{defant2009domains} as
\[
    \mon \mathcal F(\mathcal{R})=\Bigg\{z\in \mathbb{C}^{\mathbb{N}}  \,\,\, \colon \,\,\, \sum_{\alpha\in\mathbb N_0^{(\mathbb N)} } \big|c_\alpha(f) z^\alpha \big| < \infty \,\, \text{ for all } f \in \mathcal F(\mathcal{R})  \Bigg\} \,,
\]
and a systematic study was started. We are mostly interested in studying the set of monomial convergence of the following three families:
\begin{itemize}
\item $H_b(\ell_r)$ (the space of holomorphic functions of bounded type on $\ell_r$)

\item $H_\infty(B_{\ell_r})$ (the space of bounded holomorphic functions on the open unit ball of $\ell_r$)

\item $\mathcal P(^m\ell_r)$ (the space of $m$-homogeneous polynomials on $\ell_r$).
\end{itemize}
The results of Ryan and Lempert mentioned before imply $\mon H_b(\ell_1) = \mon \mathcal P(^m\ell_1) = \ell_{1}$ for every $m$ and $\mon H_\infty(B_{\ell_1}) = B_{\ell_{1}}$. On the other endpoint of the 
scale ($p=\infty$)  \cite{bayart2017multipliers} gives a precise description of $\mon\mathcal P(^m\ell_{\infty})$ as $\ell_{\frac{m-1}{2m}, \infty}$ and lower and upper inclusions for $\mon H_{\infty} (B_{\ell_{\infty}})$ that, although not optimal, are pretty tight. The study for $1 < r <\infty$ was started in \cite{defant2009domains}  and continued in \cite{bayart2016monomial}, where several interesting results in this direction for polynomials and bounded holomorphic functions were obtained. To our best knowledge, nothing has been done so far to describe the set of monomial convergence of the holomorphic functions of bounded type. In this note we make progress towards the description of these set of monomial convergence  in the case $1 < r \leq 2$.\\

In Theorem~\ref{thm: main theorem hb} we provide a complete characterization of the set of monomial convergence of the space of holomorphic functions of bounded type for $1 < r \le 2$ as
\[
\mon H_b(\ell_r) = \left\lbrace z \in \CC^\NN :  \sup_{n \ge 1} \frac{\sum_{l=1}^n z_l^* }{\log(n+1)^{1 - \frac{1}{r}}} < \infty \right\rbrace.
\]
The proof is given in Section~\ref{seccion Hb} and the main tool developed is a decomposition of the multi-indices (in an even and a pure tetrahedral part), which allows us to split the monomial expansion in different pieces, for which we are able to find proper bounds.\\

Regarding set of monomial convergence of bounded holomorphic functions on $B_{\ell_{r}}$ is considered, there are a number of deep results (see \cite[Example~4.9 (1)(a)]{defant2009domains}) that in the case we are dealing with here ($1 < r \leq 2$) imply 
\begin{equation} \label{saintsaens}
B_{\ell_{r}} \cap \ell_{1} \subsetneq \mon H_{\infty} (B_{\ell_{r}}) \subseteq B_{\ell_{r}} \cap \ell_{1+\varepsilon}  \text{ for every  } \varepsilon >0. 
\end{equation}
We give here some upper and lower inclusions, in the spirit of the ones obtained for $H_{\infty} (B_{\ell_{\infty}})$. We show in Theorem~\ref{teo: mon H_infty} that
\begin{multline*}
\Big\{ z \in \mathbb{C}^{\mathbb{N}} \colon  2e C_r \left(  \limsup_{n \to \infty} \frac{\sum_{k=1}^n z_k^*}{\log(n+1)^{1-1/r}}\right)^r  + \| z \|_{\ell_r}^r < 1  \Big\} \subset 
\mon H_{\infty} (B_{\ell_{r}}) \\
\subset \Big\{ z \in B_{\ell_r} \colon \limsup_{n \to \infty} \frac{\sum_{k=1}^n z_k^*}{\log(n+1)^{1-1/r}} \leq 1 \Big\} \,,
\end{multline*}
where $0< C_r \le \left( \dis\sum_{k = 1}^\infty \frac{\log(k+1)^{r-1}}{k^r} \right)^{1/r}$ and depends on the interplay between $\ell_r$ and the Marcinkiewicz sequence space $m_{\Psi_r}$ (see Remark~\ref{rem: desig coord}).
Let us point out that this is connected with the question stated in \cite[Remark~5.8]{bayart2016monomial}.
We will see in Remark~\ref{georgewinston} that these lower and upper inclusions recover \eqref{saintsaens}.\\

Regarding $m$-homogeneous polynomials we know from \cite[Theorem~5.1]{bayart2016monomial} and \cite[Example~4.6]{defant2009domains} that $\ell_{q- \varepsilon} \subset \mon \mathcal P(^m\ell_r) 
\subset \ell_{q,\infty}$ for every $\varepsilon>0$ (where $1 < r \leq 2$ and $q:= (mr')'$). Using elementary methods we show in Theorem~\ref{teo ellq} that we can even take $\varepsilon =0$ (this  proves a conjecture made by Defant, Maestre and Prengel in \cite{defant2009domains}). We go one step further, showing in Theorem~\ref{teo: main poli} that 
\[
\ell_{q, \frac{m}{\log m}} \subset \mon \mathcal{P}(^m\ell_r)
\]
for every $m \geq 5$ (we also give lower inclusions for $m\le 4$). The proof is technically involved and uses interpolation of linear operators defined  on cones. All this is presented in Section~\ref{mhomogeneous}.\\

Finally, as a byproduct, in Section~\ref{aplicaciones} we provide correct estimates of the asymptotic growth of the mixed-$(p,q)$ unconditional constant (a notion by Defant, Maestre and Prengel in \cite[Section~5]{defant2009domains}) as $n$ tends to infinity for every $1 \leq p, q \leq \infty$; closing the gap of the remaining cases of  \cite{galicer2016sup}.

\section{Preliminaries}

\label{preliminares}

For every $x,y \in \CC^\NN$ we denote by $|x|$  the sequence $(|x_1|,|x_2|, \ldots,|x_n|, \ldots )$. If $|x_i| \le |y_i| $ for every $i \in \NN$ we write $\vert x \vert \leq \vert y \vert$. A Banach sequence space is a Banach space $(X, \| \cdot \|_X)$  such that  $ \ell_1 \subset X \subset \ell_\infty$ 
satisfying that, if $ x \in \mathbb{C}^{\mathbb{N}}$ and $y \in X$ with  $|x| \le |y|$, then $x \in X$ and $\| x \|_X \le \| y \|_X$. 
A non-empty open set $\Rr \subset X$  is called a Reinhardt domain if given $ x \in \CC^\NN$ and $y \in \Rr$ such that $ |x| \le |y| $ then $x \in \Rr $.
Given a bounded sequence $x$ its decreasing rearrangement $x^*$ is the sequence defined as $x_n^* = \inf \lbrace \sup_{j \in \NN \setminus J}|x_j| : J \subset \NN , \card(J) < n \rbrace$. A Banach sequence space $(X, \| \cdot \|_X)$ is said to be symmetric if $x^* \in X$ whenever $x \in X$  and, moreover $\| x \|_X = \| x^* \|_X$. A set $A \subset X$ is symmetric if $x \in A $ if and only if $x^* \in A$.
For every $x \in c_0$ there is some injective mapping $\sigma : \NN \to \NN$ such that $x_n^* = |x_{\sigma(n)}|$ for all $n \in \NN$. We will say that a sequence $x \in \mathbb{C}^{\mathbb{N}}$ is decreasing whenever $\vert x \vert$ is decreasing.

We are going to deal basically with three classes of Banach sequence spaces: the classical Minkowski $\ell_{r}$ spaces, the Lorentz $\ell_{p,q}$ spaces and the Marcinkiewicz sequence spaces. Let us recall some 
definitions.  For $1 \leq p, q \leq \infty$ the space $\ell_{p,q}$ consists of those sequences $z$ for which (we use the convention $\frac{1}{\infty}=0$)
\[
\Vert z \Vert_{\ell_{p,q}} := \Big\Vert \big( z^{*}_{n} n^{\frac{1}{p} - \frac{1}{q}} \big)_{n=1}^{\infty}  \Big\Vert_{\ell_{q}} < \infty \,.
\]
Observe that in general this is a quasi-norm and only defines a norm for $1 \le q \le p \le \infty$. For $z \in \ell_{p,q}$ we define 
$$
\Vert z \Vert_{\ell_{p,q}}^* := \left( \sum_{n=1}^\infty n^{\frac{q}{p} -1} \left( \frac{1}{n} \sum_{k=1}^n z_k^* \right)^q  \right)^{1/q}.
$$
It should be noted (see \cite[Lemma~4.5]{bennett1988interpolation}) that for $1 \le p,q \le \infty$ and $z \in \ell_{p,q}$, it holds
\begin{equation*} 
\Vert z \Vert_{\ell_{p,q}} \le \Vert z \Vert_{\ell_{p,q}}^* \le p' \Vert z \Vert_{\ell_{p,q}},
\end{equation*}
so we can always work with the quasi-norm $\Vert \cdot \Vert_{\ell_{p,q}}$ and treat $(\ell_{p,q}, \Vert \cdot \Vert_{\ell_{p,q}})$ as a Banach sequence space at the expense of $p'$ (the conjugate exponent of $p$) as a price every time we do so .
Let  $\Psi = (\Psi (n) )_{n = 0}^\infty$ be an increasing sequence of nonnegative real numbers with $\Psi (0) = 0$ and $\Psi(n) > 0$ for every $n \in \NN$. These functions are usually known as symbols. The Marcinkiewicz sequence space associated to the symbol $\Psi$, denoted by $m_\Psi$, is the vector space of all bounded sequences $(z_n)_n$ such that
\[ 
\| z \|_{m_\Psi}:= \sup_{n \ge 1} \frac{\sum_{k=1}^n z_k^* }{\Psi(n)} < \infty.
\]

An $m$-homogeneous polynomial in $n$ variables is a function $P$ of the form
\[
P(z) = \sum_{ \genfrac{}{}{0pt}{}{\alpha \in \NN_{0}^{n}}{\alpha_{1} + \cdots + \alpha_{n}=m} } c_{\alpha} z_{1}^{\alpha_{1}} \cdots z_{n}^{\alpha_{n}}.
\]

Given $\alpha \in \mathbb{N}_{0}^{n}$ we write $\vert \alpha \vert = \alpha_{1} + \cdots + \alpha_{n}$ and $\Lambda (m,n) = \{ \alpha \in \mathbb{N}_{0}^{n} \colon \vert \alpha \vert =m  \}$. We also consider the set $\mathcal{J}(m,n) = \{ \mathbf{j} = (j_{1} , \ldots , j_{m}) \in \mathbb{N}^{m} \colon  1 \leq j_{1} \leq \cdots \leq j_{m} \leq n \}$. Each $\alpha \in \Lambda (m,n)$ defines $\mathbf{j}_{\alpha} = (1, \stackrel{\alpha_{1}}{\ldots} 1, 2, \stackrel{\alpha_{2}}{\ldots} 2, \ldots, n, \stackrel{\alpha_{n}}{\ldots} n)  \in \mathcal{J}(m,n)$.
Conversely, each $\mathbf{j} \in \mathcal{J}(m,n)$ defines $\alpha \in \Lambda (m,n)$ by $\alpha_{k} = \card \{ i \colon j_{i} = k \}$. In this way these two indexing sets are injective and, denoting $z_{1}^{\alpha_{1}} \cdots z_{n}^{\alpha_{n}} = z^{\alpha}$ and $z_{j_{1}} \cdots z_{j_{m}} = z_{\mathbf{j}}$ we can write each homogeneous polynomial in two alternative ways
\begin{equation} \label{eternidade}
P(z) = \sum_{\alpha \in \Lambda(m,n)} c_{\alpha} z^{\alpha}
= \sum_{1 \leq j_{1} \leq \cdots \leq j_{m} \leq n} c_{j_{1}, \ldots , j_{m}} z_{j_{1}} \cdots z_{j_{m}} 
= \sum_{\mathbf{j} \in  \mathcal{J}(m,n)} c_{\mathbf{j}}z_{\mathbf{j}} .
\end{equation}
We will freely change from the $\alpha$ to the $\mathbf{j}$ notation whenever it is more convenient (always assuming that $\alpha$ and $\mathbf{j}$ are related to each other). 
We write
\[
\vert \mathbf{j} \vert = \card \{ \mathbf{i} \in \mathbb{N}^{m} \colon \text{ there exists a permutation } \sigma \text{ of } {1, \ldots , m} \text{ so that } i_{\sigma(k)} = j_{k} \text{ for all } k  \} \,.
\]
Note that if $\mathbf{j}$ and $\alpha$ are associated to each other, then
\begin{equation}\label{strauss}
\vert \mathbf{j} \vert = \frac{m!}{\alpha_{1}! \cdots \alpha_{n}! } = \frac{m!}{\alpha!}.
\end{equation}
We will sometimes denote this by $\vert [\alpha] \vert$.
We write $\mathcal{P}(^{m}  \mathbb{C}^{n} )$ for the space of all $m$-homogeneous polynomials in $n$ variables. Each $\ell_{r}$-norm on $\mathbb{C}^{n}$ induces a different (though all equivalent) norm $\Vert P \Vert_{\mathcal{P}(^{m}  \ell_{r}^{n} )} = \sup_{\Vert z \Vert_{r} \leq 1} \vert P(z) \vert$.\\

We follow the theory of holomorphic functions on arbitrary Banach spaces as presented in \cite{mujica1986, dineen1999complex}. 
If $X$ is a (finite or infinite dimensional) Banach space, a function $P : X \to \mathbb{C}$ is a (continuous) $m$-homogeneous polynomial if there exists a (unique) continuous symmetric $m$-linear form (denoted by $\check{P}$) on $X$ such that $P(x) = \check{P} (x, \ldots ,x)$ for every $x$.
A function $f : U \to \mathbb{C}$ (where $U$ is some open subset of a Banach space $X$) is holomorphic if it is Fr\'echet differentiable at every point of $U$. If $U$ is balanced there are $P_{m}(f)$ for $m=0,1,2,\ldots$, each an $m$-homogeneous polynomial on $X$, such that $f = \sum_{m} P_{m}(f) $ uniformly on $U$. The space of all holomorphic functions on $U$ is denoted by $H(U)$. The space of bounded holomorphic functions on $B_{X}$ (the open unit ball of $X$) with the norm $\Vert f \Vert = \sup_{\Vert x \Vert \leq 1} \vert f(x) \vert$ is denoted by $H_{\infty}(B_{X})$. The space of $m$-homogeneous polynomials on $X$ is denoted by $\mathcal{P} (^{m} X)$, and is endowed with the norm $\Vert P\Vert = \sup_{\Vert x \Vert \leq 1} \vert P(x) \vert$. Every homogeneous polynomial is entire (holomorphic on $X$) and, then, its coefficients can be computed through \eqref{dowland}. Let us note that $c_{\alpha} (P) \neq 0$ only if $\vert \alpha \vert =m$ and that, if $\mathbf{j} \in \mathcal{J}(m,n)$ is associated to $\alpha$, then
\[
c_{\alpha}(P) = \frac{m!}{\alpha !} \check{P} (e_{j_{1}}, \ldots , e_{j_{m}}) \,.
\]
An entire function is said to be of bounded type if it is bounded on every bounded set of $X$. The space of entire functions of bounded type is denoted by $H_{b}(X)$. It is a Fr\'echet space with the family of seminorms defined by $p_{n} (f) =  \sup_{\Vert x \Vert \leq n} \vert f(x) \vert$.\\

We denote by $\mathbb{N}_{0}^{(\mathbb{N})}$ the set of eventually zero multi-indices. In other words, $\mathbb{N}_{0}^{(\mathbb{N})} = \bigcup_{n=1}^{\infty} \mathbb{N}_{0}^{n} \times \{0\}$. From now on we will identify $\mathbb{N}_{0}^{n} \times \{0\}$ with $\mathbb{N}_{0}^{n}$ without further notice.

\section{Rearrangement families of holomorphic functions.} \label{section: r.f.}

A very useful tool in the study of sets monomial convergence (see \cite{bayart2017multipliers}) is that usually, a sequence belongs to the set of monomial convergence if and only if its decreasing rearrangement does (see also \cite{defant2008bohr}). We isolate this property, and say in this case that  $\Ff \subset H(\Rr)$ is a \emph{rearrangement family} (where $\Rr$ is a  Reinhardt domain in a Banach sequence space $X$). In \cite{bayart2017multipliers} it was proved that $H_{\infty} (B_{c_{0}})$ and $\mathcal{P}(^{m} c_{0})$ are rearrangement families. The fact that this is also the case for $\ell_{r}$ for $1 \leq r < \infty$ is implicitly  used in \cite{bayart2016monomial}. Our aim now is to find other rearrangement families of holomorphic functions (compare this with \cite[Chapter~7]{schluters2015unconditionality} where similar results appear).\\

To this purpose we introduce another concept. We say a family $\Ff \subset H(\Rr)$ is \emph{linearly balanced} if $f \circ T\restrict{\Rr} \in \Ff$ for every $f \in \Ff$ and $T: X \to X$ linear with $ \| T \| =1$ and $T(\mathcal{R}) \subset \mathcal{R}$.

\begin{remark} \label{hindemith}
Rather straightforward arguments show that $H_b(X)$,  $\Aa_u(B_X)$ (all uniformly continuous and holomorphic functions on $B_{X}$), $H_\infty(B_X)$ and $\Pp(^m X)$ for every $m \ge 2$ are linearly balanced families.
\end{remark}

\begin{theorem}\label{teo: balanc + C0 es reord}
Let $\Rr$ be a symmetric Reinhardt domain of a symmetric Banach sequence space $X$ and $\Ff \subset H(\Rr)$ a linearly balanced family such that $\mon \Ff \subset c_0$, then $\Ff$ is a rearrangement family.
\end{theorem}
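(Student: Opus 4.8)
The plan is to show the two implications separately: if $z \in \mon \Ff$ then $z^* \in \mon \Ff$, and conversely. The key point is that $\mon \Ff \subset c_0$ forces every $z \in \mon \Ff$ to lie in $c_0$, so that there is an injective $\sigma\colon \NN \to \NN$ realizing the decreasing rearrangement, $z_n^* = |z_{\sigma(n)}|$. From such a $\sigma$ I would like to build a norm-one linear operator $T_\sigma\colon X \to X$ with $T_\sigma(\Rr) \subset \Rr$ that implements the rearrangement on the level of monomials, and then use the hypothesis that $\Ff$ is linearly balanced to transport the monomial expansion.

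First I would fix $z \in \mon \Ff$ and the associated $\sigma$. Define $T_\sigma\colon X \to X$ by $T_\sigma(x) = \big(x_{\sigma^{-1}(n)}\big)_n$ where we set the coordinate to $0$ if $n \notin \sigma(\NN)$; equivalently $T_\sigma$ maps $e_{\sigma(n)} \mapsto e_n$ and kills the basis vectors not in the range of $\sigma$. Since $X$ is symmetric and the map merely permutes a subset of coordinates and zeroes out the rest, one checks $\|T_\sigma\| = 1$ and, because $\Rr$ is a symmetric Reinhardt domain, $T_\sigma(\Rr) \subset \Rr$: indeed $|T_\sigma(x)| \le |x^*|$-type comparisons together with symmetry of $\Rr$ give membership. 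Then for $f \in \Ff$, linear balancedness yields $g := f \circ T_\sigma\restrict{\Rr} \in \Ff$. The monomial coefficients of $g$ are computed from those of $f$ by the change of variables induced by $\sigma$: for a multi-index $\alpha$ supported on $\{1,\dots,n\}$ one gets $c_\alpha(g) = c_{\sigma\cdot\alpha}(f)$, where $\sigma\cdot\alpha$ is the multi-index with $(\sigma\cdot\alpha)_{\sigma(k)} = \alpha_k$. Evaluating the monomial series of $g$ at the point $w$ with $w_n = z_n^* = |z_{\sigma(n)}|$, we have $g(w) = \sum_\alpha |c_\alpha(g)| |w^\alpha|$-type sums matching $\sum_\beta |c_\beta(f)| |z^\beta|$ over $\beta$ in the range of $\sigma$; since $c_\beta(f) = 0$ whenever $\beta$ is not supported where $z$ is nonzero (and $z \in c_0$ with $\sigma$ capturing exactly its support), these sums agree and are finite because $z \in \mon\Ff$. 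Hence $z^* \in \mon\Ff$.

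For the converse, suppose $z^* \in \mon \Ff$; again $z^* \in c_0$ so $z \in c_0$ and we get $\sigma$ with $z_n^* = |z_{\sigma(n)}|$. Now I would use the ``inverse'' operator $S_\sigma\colon X \to X$ sending $e_n \mapsto e_{\sigma(n)}$ (and extending by $0$ is not needed since this is just an injection of coordinates), which again has norm one and preserves $\Rr$ by symmetry. Then $f \circ S_\sigma\restrict{\Rr} \in \Ff$ and the same bookkeeping shows $\sum_\beta |c_\beta(f) z^\beta| \le \sum_\alpha |c_\alpha(f \circ S_\sigma) (z^*)^\alpha| < \infty$. Combining the two directions gives that $\Ff$ is a rearrangement family.

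The main obstacle, and the step deserving the most care, is the verification that the coordinate operators $T_\sigma$ and $S_\sigma$ genuinely have norm one and map $\Rr$ into $\Rr$ — this is exactly where symmetry of both $X$ and $\Rr$ is used, and where the restriction $\mon\Ff \subset c_0$ is essential (without it the relevant $z$ need not admit a rearranging injection $\sigma$ at all, and one could not even define $T_\sigma$). The remaining work — tracking how monomial coefficients transform under a coordinate permutation and checking the resulting series literally coincide — is bookkeeping, but one must be careful that coefficients $c_\beta(f)$ vanish off the support of $z$ so that passing between $z$ and $z^*$ loses nothing.
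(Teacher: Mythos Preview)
Your overall strategy coincides with the paper's: build norm-one coordinate operators from the rearranging injection $\sigma$, invoke linear balancedness to stay inside $\Ff$, and track how monomial coefficients transform. However, your execution of the direction $z \in \mon\Ff \Rightarrow z^* \in \mon\Ff$ contains a genuine error.

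The assertion ``$c_\beta(f) = 0$ whenever $\beta$ is not supported where $z$ is nonzero'' is false: the coefficients $c_\beta(f)$ depend only on $f$ and have no reason to vanish off the support of an unrelated point $z$. Fortunately you do not need this. Taking $T_\sigma$ to be the operator $(T_\sigma x)_n = x_{\sigma(n)}$ (this is your basis description $e_{\sigma(n)} \mapsto e_n$; your first formula $(x_{\sigma^{-1}(n)})_n$ describes a different map), one has $|T_\sigma z| = z^*$ and $c_\alpha(f) = c_{\alpha(\sigma)}(f \circ T_\sigma)$ for the relabeling $\alpha \mapsto \alpha(\sigma)$ determined by $T_\sigma(w)^\alpha = w^{\alpha(\sigma)}$. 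Since this relabeling is injective,
\[
\sum_\alpha |c_\alpha(f)\,(z^*)^\alpha| \;=\; \sum_\alpha \big|c_{\alpha(\sigma)}(f\circ T_\sigma)\, z^{\alpha(\sigma)}\big| \;\le\; \sum_\beta \big|c_\beta(f\circ T_\sigma)\, z^\beta\big| \;<\; \infty,
\]
the last bound because $f\circ T_\sigma \in \Ff$ and $z \in \mon\Ff$. Only an inequality is required, not the equality you claim. Note also that it is the monomial sum of $f$ at $z^*$ that must be bounded (and then related to a sum at $z$ for $f\circ T_\sigma$), not the monomial sum of $g = f\circ T_\sigma$ at $z^*$ as you wrote; the roles of the point and the function are swapped in your account.

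Your converse direction is essentially correct: there the terms with $\alpha$ supported outside $\sigma(\NN)$ really do drop out, but because $z^\alpha = 0$ for such $\alpha$ (since $z_k = 0$ for $k \notin \sigma(\NN)$), not because $c_\alpha(f) = 0$.
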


We give a series of preliminary results needed for the proof of Theorem~\ref{teo: balanc + C0 es reord}. Given an injective mapping $\sigma : \NN \to \NN$ we define two mappings in the following way. First
\begin{equation}\label{eq: op T}
\begin{split}
T_\sigma : \CC^{\NN} & \to \CC^{\NN} \\
x & \mapsto (x_{\sigma(k)})_{k \in \NN} \,.
\end{split}
\end{equation}
Second, $S_\sigma : \CC^\NN \to \CC^\NN$ is defined for $x \in \CC^{\NN}$ by 
\begin{equation}\label{eq: op S}
(S_{\sigma}x)_k =\begin{cases}
0 & \text{ if } k \notin \sigma(\NN) \\
x_{\sigma^{-1}(k)} & \text{ if } k \in \sigma(\NN).
\end{cases}    
\end{equation}
Both are clearly  linear and $T_{\sigma} (S_{\sigma} x) = x$ for every $x$.

\begin{remark}\label{rem: T,S achican la estrella }
Let us see now how these two mappings behave with the decreasing rearrangement of a bounded sequence $x$. Fixed $n \in \NN$ and $J \subset \NN$ such that $\card(J) < n$ we have
\[
    \sup_{\sigma(j) \in \NN \setminus J} |x_{\sigma(j)}|  
     =  \sup_{ j \in (\NN \setminus J) \cap \sigma(\NN)} |x_j| 
     \le \sup_{ j \in \NN \setminus J} |x_j| \,.
\]
Thus
\[
    \big( T_\sigma(x) \big)^*_n 
    =  \inf \lbrace \sup_{\sigma(j) \in \NN \setminus J} |x_{\sigma(j)}| : J \subset \NN, \text{card}(J) < n \rbrace 
    \le  \inf \lbrace \sup_{j \in \NN \setminus J} |x_j| : J \subset \NN, \text{card}(J) < n \rbrace = x_n^* .
\]
That is, $T_\sigma(x)^* \le x^* $. A similar argument shows that $(S_\sigma x)^* = x^*$. 
\end{remark}

The following lemma shows that the restrictions of $S_\sigma$ and $T_\sigma$ to symmetric Banach sequence spaces are endomorphisms of norm $1$.

\begin{lemma}\label{lem: b.d. T,S}
Let $X$ be a symmetric Banach sequence space and $\sigma : \NN \to \NN$ an injective mapping. Then $T_\sigma, S_\sigma: X \to X$ defined by \eqref{eq: op T} and \eqref{eq: op S} respectively are well defined, $\| T_\sigma \| =1$ and $ S_\sigma$ is an isometry. 
\end{lemma}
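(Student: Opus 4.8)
The plan is to verify the three claims — well-definedness, $\|T_\sigma\|=1$, and that $S_\sigma$ is an isometry — essentially by reducing everything to the symmetry property of $X$ together with the elementary behaviour of the decreasing rearrangement established in Remark~\ref{rem: T,S achican la estrella }. The key observation is that a symmetric Banach sequence space satisfies $\|x\|_X = \|x^*\|_X$ and that if $|x|\le|y|$ with $y\in X$ then $x\in X$ with $\|x\|_X\le\|y\|_X$; combining these two facts, if $x^*\le y^*$ (coordinatewise) and $y\in X$, then $\|x\|_X = \|x^*\|_X \le \|y^*\|_X = \|y\|_X$, so $x\in X$. This monotonicity-with-respect-to-rearrangement is the only tool needed.

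First I would treat $S_\sigma$. For any $x\in X$ we have, by Remark~\ref{rem: T,S achican la estrella }, that $(S_\sigma x)^* = x^*$; hence by symmetry of $X$ (and the observation above) $S_\sigma x\in X$ and $\|S_\sigma x\|_X = \|(S_\sigma x)^*\|_X = \|x^*\|_X = \|x\|_X$. Thus $S_\sigma$ maps $X$ into $X$ and preserves the norm, i.e. it is an isometry; in particular $\|S_\sigma\|=1$ (note $X\ne\{0\}$). Next I would treat $T_\sigma$. For $x\in X$, Remark~\ref{rem: T,S achican la estrella } gives $(T_\sigma x)^* \le x^*$ coordinatewise, so again by symmetry of $X$ we get $T_\sigma x\in X$ with $\|T_\sigma x\|_X = \|(T_\sigma x)^*\|_X \le \|x^*\|_X = \|x\|_X$; therefore $T_\sigma$ is well defined on $X$ and $\|T_\sigma\|\le 1$. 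For the reverse inequality one uses that $T_\sigma S_\sigma = \id$: given any $x\in X$, $\|x\|_X = \|T_\sigma(S_\sigma x)\|_X \le \|T_\sigma\|\,\|S_\sigma x\|_X = \|T_\sigma\|\,\|x\|_X$, and picking $x\ne 0$ forces $\|T_\sigma\|\ge 1$. Hence $\|T_\sigma\|=1$.

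There is essentially no hard obstacle here — the content is entirely in the interplay between the definition of a symmetric Banach sequence space and the two elementary rearrangement facts already recorded. The one point to be a little careful about is the logical order: one must first know $S_\sigma x$ and $T_\sigma x$ actually lie in $X$ (so that their norms make sense) before comparing norms, and this is exactly what the rearrangement comparison supplies via the defining monotonicity axiom of a Banach sequence space. It is also worth remarking explicitly that $X$ contains $\ell_1$, hence contains nonzero elements, so the normalizations $\|T_\sigma\|=1$ and "$S_\sigma$ is an isometry" are not vacuous. No use of holomorphy, of the Reinhardt domain, or of any deeper structure is needed; the lemma is purely a statement about symmetric sequence spaces.
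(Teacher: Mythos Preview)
Your argument is correct and follows exactly the same route as the paper: well-definedness, the isometry of $S_\sigma$, and the bound $\|T_\sigma\|\le 1$ all come from Remark~\ref{rem: T,S achican la estrella } plus symmetry, and the equality $\|T_\sigma\|=1$ from $T_\sigma S_\sigma = \id$. The paper merely compresses this into two sentences, while you have spelled out the details (and the correct logical order) more carefully.
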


\begin{proof}
Remark~\ref{rem: T,S achican la estrella } together with the symmetry of the space imply that both operators are well defined, that $S_{\sigma}$ is an isometry and $\Vert T_{\sigma} \Vert \leq 1$. 
The fact that $\| T_\sigma \| =1$ follows from the equality $T_\sigma (S_\sigma x_{0})=x_0$.
\end{proof}

Now we are able to give the proof of Theorem~\ref{teo: balanc + C0 es reord}.

\begin{proof}[Proof of Theorem~\ref{teo: balanc + C0 es reord}]
To begin with we take $z \in \mon \mathcal{F}$ and see that $z^{*} \in \mon \mathcal{F}$. As $\mon \Ff \subset c_0$ there is some injective mapping $\sigma: \NN \to \NN $ such that $z_k^* = |z_{\sigma(k)}|$ for every $k \in \NN$. Observe that $|T_\sigma(z)| = z^*$. We take $f \in \mathcal{F}$, then $f \circ T_{\sigma}$ also belongs to $\mathcal{F}$ and what we want to see first is that, if $\alpha(\sigma) \in \NN_0^{(\NN)}$ denotes the multi-index that fulfils $T_\sigma(z)^\alpha = z^{\alpha(\sigma)}$, then
\begin{equation} \label{carlomagno}
c_{\alpha} (f) = c_{\alpha (\sigma)} (f \circ T_{\sigma})
\end{equation}
for every $\alpha$. Take, then, some $\alpha \in \mathbb{N}_{0}^{(\mathbb{N})}$ and set $N= \max\{k \colon \alpha_{k} \neq 0 \}$. On one hand we have
\[
(f \circ T_{\sigma}) (w) = \sum_{\beta \in \mathbb{N}_{0}^{N}} c_{\beta} (f \circ T_{\sigma}) w^{\beta} \,,
\]
for all $w \in \mathbb{C}^{N} \cap \mathcal{R}$. Define $M= \max \{ \sigma(k) \colon k=1, \ldots, N  \}$ and note that $T_{\sigma} (w) \in  \mathbb{C}^{M} \cap \mathcal{R}$. Thus
\[
 (f \circ T_\sigma)(w) = f(T_\sigma(w))= \sum_{\gamma \in \NN_0^{N} } c_{\gamma} (f) T_\sigma(w)^{\gamma} =  \sum_{\gamma \in\NN_0^{N} } c_{\gamma} (f) w^{\gamma(\sigma)}.
\]
The uniqueness of the Taylor coefficients gives \eqref{carlomagno}. Once we have this we obtain (recall that $f \circ T_{\sigma} \in \mathcal{F}$ and $z \in \mon \mathcal{F}$)
\[
\sum_{\alpha \in \NN_0^{(\NN)}} |c_\alpha(f)(z^*)^\alpha| 
    =   \sum_{\alpha \in \NN_0^{(\NN)}} |c_\alpha(f)| |(T_\sigma(z))^\alpha|
= \sum_{\alpha \in \NN_0^{(\NN)}} |c_{\alpha(\sigma)}(f \circ T_\sigma)| |z^{\alpha(\sigma)}| \\
    \le  \sum_{\alpha \in \NN_0^{(\NN)}} |c_\alpha(f \circ T_\sigma) z^\alpha| < \infty,
\]
which proves our claim.\\

For the converse, suppose $z^* \in \mon \Ff$. Again, as $\mon \Ff \subset c_0$, there is some injective mapping $\sigma: \NN \to \NN $ such that $z_k^* = |z_{\sigma(k)}|$ for every $k \in \NN$. Now it will be useful to notice $| z | = S_\sigma(z^*)$. Given $f \in \Ff $ we have
\begin{align}\label{eq: taylor reord z*}
    \sum_{\alpha \in \NN_0^{(\NN)}} |c_\alpha(f) z^\alpha| 
    = &  \sum_{\alpha \in \NN_0^{(\NN)}} |c_\alpha(f)| |(S_\sigma(z^*))^\alpha|.
\end{align}
Besides,
\[
\sum_{\alpha \in \NN_0^N } c_\alpha (f \circ S_\sigma) w^\alpha = f (S_\sigma(w)) = \sum_{\alpha \in \NN_0^N } c_\alpha (f) S_\sigma(w)^\alpha = \sum_{\alpha \in \NN_0^N } c_\alpha (f) S_\sigma(w)^\alpha .
\]
Observe that for $\alpha \in \NN^{(\NN)}$, if there is $k \in \NN \setminus \sigma(\NN)$ such that $\alpha_ k \neq 0$ then $S_\sigma(w)^\alpha = 0$, otherwise we define $\alpha(\sigma^{-1}) \in \NN^{(\NN)}$ as the only multi-index which fulfils $S_\sigma(w)^\alpha = w^{\alpha(\sigma^{-1})}$. By the uniqueness of the coefficients of the Taylor expansion for $ f  \circ S_\sigma : \CC^N \to \CC$ it follows
\begin{equation*}
c_\alpha(f) S_\sigma(z^*)^\alpha = \begin{cases}
0 & \text{ if there is } k \notin \sigma(\NN) \text{ such that } \alpha_k \neq 0 \\
c_{\alpha(\sigma^{-1})}(f \circ S_\sigma) (z^*)^{\alpha(\sigma^{-1})}  & \text{ otherwise,}
\end{cases}    
\end{equation*}
then
\begin{equation}\label{basta la desi para z*}
\begin{split}
    \sum_{\alpha \in \NN_0^{(\NN)}}|c_\alpha(f) z^\alpha| 
  =   \sum_{\alpha \in \NN_0^{(\NN)}} &   |c_\alpha(f) | |(S_\sigma(z^*))^\alpha| \\
&    =  \sum_{\alpha \in (\sigma(\NN) \cup \lbrace 0 \rbrace)^{(\NN)}} 
\big\vert  c_{\alpha(\sigma^{-1})}(f \circ S_\sigma) (z^*)^{\alpha(\sigma^{-1})} \big\vert 
    \le  \sum_{\alpha \in \NN_0^{(\NN)}} |c_\alpha(f \circ S_\sigma)| |(z^*)^\alpha| < \infty,
\end{split}
\end{equation}
as we wanted.
\end{proof}

\begin{remark}\label{rem: familias con mon en c0}
Let $\Rr$ be a symmetric Reinhardt domain in a Banach sequence space $X$ and consider  a family of homolorphic functions $\Ff \subset H(\Rr)$ such that for some $m \ge 2$ the space $\Pp(^m X)$ lies inside $\Ff$. Then, as $X \subset \ell_\infty$ continuously  we have $\Pp(^m \ell_\infty) \subset \Pp(^m X) \subset \Ff$. With this, \cite[Theorem~2.1]{bayart2017multipliers} yields
\[
\mon \Ff \subset \mon \Pp(^m \ell_\infty) = \ell_{\frac{2m}{m-1},\infty}\subset c_0.
\]
\end{remark}

\begin{corollary}\label{cor: familias de reordenamiento}
For every symmetric Banach sequence space $X$ the families of holomorphic functions $H_b(X), \Aa_u(B_X), H_\infty(B_X)$ and $\Pp(^m X)$ with $m \ge 2$ are rearrangement families.
\end{corollary}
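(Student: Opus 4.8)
The plan is to deduce Corollary~\ref{cor: familias de reordenamiento} directly from Theorem~\ref{teo: balanc + C0 es reord} by verifying its two hypotheses for each of the four families. The theorem requires that the ambient object be a linearly balanced family $\Ff \subset H(\Rr)$ on a symmetric Reinhardt domain $\Rr$ inside a symmetric Banach sequence space $X$, and that $\mon \Ff \subset c_0$; under these hypotheses $\Ff$ is a rearrangement family. So the proof reduces to two checks.

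First, the linear-balancedness. For $H_b(X)$ and $H_\infty(B_X)$ we simply invoke Remark~\ref{hindemith}, which already records that $H_b(X)$, $\Aa_u(B_X)$, $H_\infty(B_X)$ and $\Pp(^m X)$ ($m \ge 2$) are linearly balanced. The domains involved are $\Rr = X$ (for $H_b(X)$ and $\Pp(^m X)$) and $\Rr = B_X$ (for $\Aa_u(B_X)$ and $H_\infty(B_X)$); both are symmetric Reinhardt domains whenever $X$ is a symmetric Banach sequence space — $X$ trivially so, and $B_X$ because $\|\cdot\|_X$ is a symmetric norm — so the structural hypotheses of Theorem~\ref{teo: balanc + C0 es reord} are met. (If one wants $H_b$ as an $H(\Rr)$-family one takes $\Rr = X$; note $H_b(X) \subset H(X)$.)

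Second, the condition $\mon \Ff \subset c_0$. For $\Ff = \Pp(^m X)$ with $m \ge 2$, this is exactly Remark~\ref{rem: familias con mon en c0}: since $X \subset \ell_\infty$ continuously one has $\Pp(^m \ell_\infty) \subset \Pp(^m X)$, hence $\mon \Pp(^m X) \subset \mon \Pp(^m \ell_\infty) = \ell_{\frac{2m}{m-1},\infty} \subset c_0$. For the other three families it suffices to observe that each of them \emph{contains} $\Pp(^m X)$ for some (indeed every) $m \ge 2$: an $m$-homogeneous polynomial is entire and bounded on bounded sets, so it lies in $H_b(X)$; its restriction to $B_X$ is bounded and uniformly continuous, so it lies in $\Aa_u(B_X) \subset H_\infty(B_X)$. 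Since $\mon$ is antimonotone with respect to inclusion of families (a larger family imposes more convergence requirements, hence a smaller monomial-convergence set), we get $\mon \Ff \subset \mon \Pp(^m X) \subset c_0$ for $\Ff \in \{H_b(X), \Aa_u(B_X), H_\infty(B_X)\}$ as well — this is precisely the mechanism spelled out in Remark~\ref{rem: familias con mon en c0}. With both hypotheses verified, Theorem~\ref{teo: balanc + C0 es reord} applies to each family and yields the claim.

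I do not expect a genuine obstacle here: the corollary is a packaging statement, and essentially all the work is in Theorem~\ref{teo: balanc + C0 es reord} and in Remarks~\ref{hindemith} and \ref{rem: familias con mon en c0}. The only points needing a sentence of care are (i) checking that $B_X$ is a symmetric Reinhardt domain when $X$ is symmetric — which follows immediately from $|x| \le |y|$ implying $\|x\|_X \le \|y\|_X$ together with $\|x^*\|_X = \|x\|_X$ — and (ii) making explicit the inclusions $\Pp(^m X) \subset H_b(X)$ and $\Pp(^m X)\restrict{B_X} \subset \Aa_u(B_X) \subset H_\infty(B_X)$ that feed the monotonicity argument for $\mon$. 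Both are routine, so the proof will be only a few lines.
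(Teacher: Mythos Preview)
Your proposal is correct and follows exactly the paper's approach: invoke Remark~\ref{hindemith} for linear balancedness, Remark~\ref{rem: familias con mon en c0} for $\mon \Ff \subset c_0$, and conclude via Theorem~\ref{teo: balanc + C0 es reord}. The paper's proof is a two-line version of what you wrote; your additional sentences on $B_X$ being a symmetric Reinhardt domain and on the inclusions $\Pp(^m X) \subset H_b(X)$, $\Aa_u(B_X)$, $H_\infty(B_X)$ simply make explicit what the paper leaves implicit.
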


\begin{proof}
Each of these families satisfies the condition in Remark~\ref{rem: familias con mon en c0}. Then Remark~\ref{hindemith} and Theorem~\ref{teo: balanc + C0 es reord} give the conclusion.
\end{proof}

\begin{remark}
As we have already pointed out, we are mainly interested in $H_{\infty} (B_{\ell_{r}})$,  $H_{b}(\ell_{r})$ and $\mathcal{P}(^{m} \ell_{r})$. The set of monomial convergence of each one of these spaces is, by Remark~\ref{rem: familias con mon en c0}  contained in $c_{0}$. But, as matter of fact, we can say more. By \cite[Proposition~20.3]{defant2019libro} we have 
$\mon H_{\infty} (B_{\ell_{r}}) \subseteq B_{\ell_{r}}$. Noting that every functional $f \in \ell_{r}^{*}$ belongs to $H_{b} (\ell_{r})$ and using the definition of the set of monomial convergence we have $\mon H_{b}(\ell_{r}) \subseteq \ell_{r}$. Finally, exactly the same argument as in \cite[Remark~10.7]{defant2019libro} shows that $\mon \mathcal{P}(^{m} \ell_{r}) \subseteq \mathcal{P}(^{1} \ell_{r}) = \mon \ell_{r}^{*}=\ell_{r}$.
\end{remark}

\section{Monomial convergence for holomorphic functions of bounded type on \texorpdfstring{$\ell_r$}.}\label{seccion Hb}

We can now describe the set of monomial convergence of $H_{b}(\ell_{r})$ for $1 < r \leq 2$. It happens to be a Marcinkiewicz space $m_{\Psi_r}$ where the symbol is given by 
\[
\Psi_r(n) := \log(n + 1)^{1 - \frac{1}{r}},
\]
for $n \in \NN_0$.

\begin{theorem}\label{thm: main theorem hb}
For $1 < r \le 2$,  
\[
\mon H_b(\ell_r) = m_{\Psi_r}:=  \left\lbrace z \in \CC^\NN :  \sup_{n \ge 1} \frac{\sum_{k=1}^n z_k^* }{\log(n+1)^{1 - \frac{1}{r}}} < \infty \right\rbrace.
\]
\end{theorem}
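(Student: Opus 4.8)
The strategy is to prove the two inclusions separately. For the easier inclusion, $m_{\Psi_r} \subseteq \mon H_b(\ell_r)$, I would take $z \in m_{\Psi_r}$; by Corollary~\ref{cor: familias de reordenamiento} the space $H_b(\ell_r)$ is a rearrangement family, so I may assume $z = z^*$ is decreasing and satisfies $\sum_{k=1}^n z_k \le C \log(n+1)^{1-1/r}$ for all $n$. Given $f \in H_b(\ell_r)$ with Taylor expansion $f = \sum_m P_m(f)$, I need to bound $\sum_{m} \sum_{\alpha : |\alpha|=m} |c_\alpha(f)| |z^\alpha|$. The key is a good estimate for $\sum_{|\alpha|=m} |c_\alpha(P)| |z^\alpha|$ in terms of $\|P\|_{\mathcal P(^m\ell_r)}$ and the $m_{\Psi_r}$-norm of $z$, together with the fact that for $f \in H_b$ the norms $\|P_m(f)\|$ decay fast enough (faster than any geometric rate after suitable scaling, since $f$ is bounded on all balls $nB_{\ell_r}$) to make the sum over $m$ converge. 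This is where the announced decomposition of multi-indices into an ``even'' part and a ``pure tetrahedral'' part enters: writing $\alpha = 2\beta + \gamma$ with $\gamma$ having all entries $0$ or $1$, one splits $z^\alpha = (z^2)^\beta z^\gamma$, handles the even/square part by Cauchy--Schwarz-type estimates (exploiting $r \le 2$ so that $z \in \ell_r \Rightarrow z^2 \in \ell_{r/2} \subset \ell_1$ type reasoning), and handles the tetrahedral part using known polynomial inequalities on $\ell_r$ (Bohnenblust--Hille / Bayart-type bounds), where the $\log(n+1)^{1-1/r}$ growth is exactly what is needed to control $\sum_{\gamma \text{ tetrahedral}} |c_\gamma| z^\gamma$.

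For the reverse inclusion $\mon H_b(\ell_r) \subseteq m_{\Psi_r}$, I would argue by contradiction: suppose $z \in \mon H_b(\ell_r)$ (WLOG $z = z^*$ decreasing, again by the rearrangement property) but $\sup_n \frac{\sum_{k=1}^n z_k}{\log(n+1)^{1-1/r}} = \infty$. I then want to construct a single $f \in H_b(\ell_r)$ whose monomial expansion diverges at $z$. The natural candidates are sums of homogeneous polynomials $f = \sum_m \lambda_m Q_{m}$, where each $Q_m$ is an $m$-homogeneous polynomial supported on a block of coordinates, built so that $\sum_{|\alpha|=m} |c_\alpha(Q_m)| z^\alpha$ is large (of order $\exp$ of something involving $\big(\sum_{k \le n_m} z_k\big)^{?}$) while $\|Q_m\|_{\mathcal P(^m\ell_r)}$ stays controlled; the coefficients $\lambda_m$ are chosen summable-against-norms so that $f \in H_b$ but $\sum_m |\lambda_m| \sum_{|\alpha|=m}|c_\alpha(Q_m)| z^\alpha = \infty$. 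A clean choice for $Q_m$ is a power $(\ell_m)^m$ of a linear functional $\ell_m = \sum_{k} a_k e_k^*$ supported on the block, or more refined extremal polynomials coming from the same inequalities used in the first half; the quantitative input is an estimate showing that if $\sum_{k=1}^{n} z_k$ is much larger than $\log(n+1)^{1-1/r}$ then one can make the monomial sum for an $m$-homogeneous polynomial of unit norm as large as one pleases (with $m$ depending on $n$).

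The main obstacle, as usual in these monomial-convergence statements, is the second (lower/necessity) inclusion: one must produce an honest entire function of bounded type, i.e., control simultaneously the norms $\|Q_m\|_{\mathcal P(^m\ell_r)}$ for \emph{all} $m$ (so that the seminorms $p_n(f) = \sup_{\|x\|\le n}|f(x)|$ are finite) and the growth of the monomial sums at the specific point $z$. The delicate balance is choosing the block sizes $n_m$, the coefficients of $Q_m$, and the scalars $\lambda_m$ so that the norm estimates on the polynomials (which involve the constants in the relevant polynomial inequalities on $\ell_r$, and hence the $\Psi_r$ symbol) beat the geometric-type decay needed for boundedness on every ball while the divergence at $z$ survives. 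I expect the even/tetrahedral decomposition to be the technical heart of \emph{both} directions: in the sufficiency part it gives the upper bound matching $\Psi_r$, and in the necessity part the extremal polynomials that saturate that bound are exactly the ones built from the tetrahedral piece.
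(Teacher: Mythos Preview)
Your plan for the inclusion $m_{\Psi_r}\subseteq\mon H_b(\ell_r)$ is essentially the paper's: the even/tetrahedral splitting of multi-indices, combined with the Bayart--Defant--Schl\"uters coefficient estimate (Lemma~\ref{lem: BDS}), yields a hypercontractive bound $\sum_{|\alpha|=m}|c_\alpha(P)z^\alpha|\le C_r^m\|z\|_{m_{\Psi_r}}^m\|P\|_{\mathcal P(^m\ell_r)}$ (Theorem~\ref{teo: hypercontrac 1 < r <2}), and then the bounded-type hypothesis gives summability in $m$.

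The gap is in the reverse inclusion $\mon H_b(\ell_r)\subseteq m_{\Psi_r}$. Your proposed extremal polynomials --- powers of linear functionals $(\sum_{k\le n}a_kx_k)^m$ --- are not sharp: taking $a_k=1$ gives $\|P\|_{\mathcal P(^m\ell_r^n)}=n^{m/r'}$, so the resulting bound is $\sum_{k\le n}z_k^*\le Cn^{1/r'}$, not $C\log(n+1)^{1/r'}$. The polynomials that saturate $\Psi_r$ are the \emph{random-sign} polynomials from~\eqref{polinomios de Bayart} (Boas--Bayart), whose norm is only $C_r(\log(m)\,m!)^{1-1/r}n^{1-1/r}$ while all monomial coefficients have modulus $m!/\alpha!$; the tetrahedral decomposition plays no role here. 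The paper also avoids your glueing construction entirely: instead of building a single bad $f$, it uses a closed-graph argument (Lemma~\ref{relacion mon}) to show that $z\in\mon H_b(\ell_r)$ forces $\sum_{|\alpha|=m}|c_\alpha(P)z^{*\alpha}|\le C_z^m\|P\|$ uniformly in $m$, then feeds in the random polynomial and optimises with $m=\lfloor\log(n+1)\rfloor$. This is both shorter and sidesteps the delicate bookkeeping of block supports and scalars $\lambda_m$.
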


We handle the upper and the lower inclusions separately in the following two sections.

\subsection{The upper inclusion \texorpdfstring{$\boldsymbol{\mon H_b(\ell_r) \subset m_{\Psi_r} }$}.}

Typically, the way to prove upper inclusions for a set of monomial convergence goes through providing polynomials satisfying certain convenient properties. Over the last years probabilistic techniques have shown to be extremely helpful to find such polynomials. This is, for instance, what is done in \cite[Theorem~2.2]{bayart2017multipliers}, where the probabilistic device is the well known Kahane-Salem-Zygmund inequality. Here we follow essentially the same lines, replacing the polynomials provided by this inequality by other ones. Following techniques of Boas and Bayart (see \cite{boas2000majorant},  \cite{bayart2012maximum} and also \cite[Corollary~17.6]{defant2019libro}) for every $1 \leq r  \leq 2$ there is a constant $C_{r}>0$ such that for all $n$ and $m\ge 2$ we can find a choice of signs $(\varepsilon_{\alpha})_{\alpha}$ so that
\begin{equation}\label{polinomios de Bayart}
\sup_{\Vert z \Vert_{r} < 1} \Big\vert \sum_{\alpha \in \Lambda(m,n)} \varepsilon_{\alpha} \frac{m!}{\alpha!} z^{\alpha} \Big\vert \leq C_{r}
  (\log(m) m! )^{1- \frac{1}{r}} \;n^{1- \frac{1}{r}}\,.
\end{equation}
These polynomials are the main tool for the proof of the upper inclusion. We also need the following result, an extension of \cite[Lemma~4.1]{defant2009domains} whose proof follows the same lines. 

\begin{lemma}\label{relacion mon}
Let  $\mathcal{R}$ be a Reinhardt domain in a Banach sequence space $X$ and let $(\mathcal F,(q_{n})_{n})$ be a Fr\'echet space of holomorphic functions  continuously included in $H_b(\mathcal{R})$.  Then, for each $z\in \mon(\mathcal F)$, there exist $C>0$ and $n$ such that 
\[
\sum_{\alpha\in \mathbb N_0^{(\mathbb N)}}  |c_{\alpha} z^\alpha| \le C q_{n}(f).
\]
for every $f\in \mathcal F$.
In particular, if $z \in \mon H_b(X)$, there exists $C>0$,  such that 
\[
\sum_{\alpha\in \Lambda(m,n)} |c_{\alpha}(P) z^\alpha| \le C^m \|P\|_{\mathcal P(^mX)},
\]
for every $P\in \mathcal P(^mX)$.
\end{lemma}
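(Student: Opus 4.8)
\textbf{Proof plan for Lemma~\ref{relacion mon}.}

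The plan is to mimic the proof of \cite[Lemma~4.1]{defant2009domains}, exploiting the structure of a Fr\'echet space of holomorphic functions together with a closed graph / Baire category argument. First I would consider the linear map
\[
\Phi : \mathcal F \longrightarrow \ell_1(\mathbb N_0^{(\mathbb N)}), \qquad \Phi(f) = \big( c_\alpha(f)\, z^\alpha \big)_{\alpha \in \mathbb N_0^{(\mathbb N)}},
\]
which is well defined precisely because $z \in \mon(\mathcal F)$. The key point is that $\Phi$ has closed graph: if $f_k \to f$ in $\mathcal F$ and $\Phi(f_k) \to (a_\alpha)$ in $\ell_1$, then since $\mathcal F$ is continuously included in $H_b(\mathcal R)$ and the Taylor coefficient functionals $f \mapsto c_\alpha(f)$ are continuous on $H_b(\mathcal R)$ (they are given by the Cauchy integral formula \eqref{dowland} over a fixed polydisc inside $\mathcal R$), we get $c_\alpha(f_k) \to c_\alpha(f)$ for each $\alpha$; comparing with coordinatewise convergence of $\Phi(f_k) \to (a_\alpha)$ forces $a_\alpha = c_\alpha(f) z^\alpha$, i.e. $(a_\alpha) = \Phi(f)$. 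Since $\mathcal F$ is a Fr\'echet space and $\ell_1$ is Banach (hence Fr\'echet), the closed graph theorem yields continuity of $\Phi$, which is exactly the assertion that there are $C>0$ and an index $n$ with $\sum_\alpha |c_\alpha(f) z^\alpha| = \|\Phi(f)\|_1 \le C\, q_n(f)$ for all $f \in \mathcal F$.

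For the ``in particular'' statement, I would apply the general assertion with $\mathcal F = H_b(X)$, whose defining seminorms are $q_n(f) = p_n(f) = \sup_{\|x\|\le n} |f(x)|$. Thus for $z \in \mon H_b(X)$ there exist $C_0 > 0$ and $n_0 \in \mathbb N$ such that $\sum_\alpha |c_\alpha(f) z^\alpha| \le C_0\, p_{n_0}(f)$ for every $f \in H_b(X)$. Now fix $m$ and an $m$-homogeneous polynomial $P \in \mathcal P(^m X)$. Since $P$ is entire of bounded type, homogeneity gives $p_{n_0}(P) = \sup_{\|x\|\le n_0} |P(x)| = n_0^m \|P\|_{\mathcal P(^m X)}$, and because $c_\alpha(P) \ne 0$ only for $|\alpha| = m$ we obtain
\[
\sum_{\alpha \in \Lambda(m,n)} |c_\alpha(P) z^\alpha| \le \sum_{\alpha \in \mathbb N_0^{(\mathbb N)}} |c_\alpha(P) z^\alpha| \le C_0\, p_{n_0}(P) = C_0\, n_0^m \|P\|_{\mathcal P(^m X)}.
\]
Setting $C := \max\{C_0, n_0\}$ (or simply $C$ large enough that $C^m \ge C_0 n_0^m$ for all $m \ge 1$, e.g. $C = C_0 n_0$ if $C_0 \ge 1$, adjusting otherwise) gives $\sum_{\alpha \in \Lambda(m,n)} |c_\alpha(P) z^\alpha| \le C^m \|P\|_{\mathcal P(^m X)}$ as claimed.

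The only genuine obstacle is verifying the continuity of the coefficient functionals $f \mapsto c_\alpha(f)$ on $H_b(\mathcal R)$ (equivalently on $\mathcal F$ via the continuous inclusion), which is where the hypothesis that $\mathcal R$ is a Reinhardt domain and that $\mathcal F \hookrightarrow H_b(\mathcal R)$ continuously is used: one picks a polydisc $\{|z| \le \rho\} \subset \mathcal R_N$ (with $N = \max\{k : \alpha_k \ne 0\}$), applies \eqref{dowland}, and bounds the integral by $\sup$ of $|f|$ on a bounded subset of $\mathcal R$, hence by a suitable seminorm of $H_b(\mathcal R)$ and thereby by some $q_n$ on $\mathcal F$. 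Everything else is the routine closed graph argument and the homogeneity scaling, exactly as in \cite{defant2009domains}.
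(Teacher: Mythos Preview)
Your proposal is correct and follows exactly the approach the paper intends: the paper does not give its own proof but refers to \cite[Lemma~4.1]{defant2009domains}, which is precisely the closed-graph argument you outline (define $\Phi:\mathcal F\to\ell_1$, use continuity of the coefficient functionals via the Cauchy formula to show the graph is closed, then apply the closed graph theorem for Fr\'echet spaces). Your deduction of the ``in particular'' part via homogeneity, $p_{n_0}(P)=n_0^m\|P\|_{\mathcal P(^mX)}$, and the choice of $C$ large enough that $C^m\ge C_0 n_0^m$ for all $m$ is exactly right.
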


We have now everything at hand to proceed with the proof of the upper inclusion.

\begin{proof}[Proof of the upper inclusion in Theorem~\ref{thm: main theorem hb}]
Fix $1 < r \le 2$ and choose $z \in \mon H_{b} (\ell_{r})$. Now fix $n,m$, choose signs as in \eqref{polinomios de Bayart} and define the polynomial $P(w):=  \sum_{\alpha \in \Lambda(m,n)} \varepsilon_{\alpha} \frac{m!}{\alpha!} w^{\alpha}$. By Corollary~\ref{cor: familias de reordenamiento} we know that $z^{*} \in \mon H_b(\ell_r)$. Using first the multinomial formula, then Lemma~\ref{relacion mon} and finally \eqref{polinomios de Bayart} we have
\begin{equation} \label{elenitagentil}
\begin{split}
\left( \sum_{j = 1}^n |z_j^*|  \right)^m = \sum_{ \alpha \in \Lambda(m,n)} \frac{m!}{\alpha!} & |(z^*)^\alpha| 	 =  \sum_{ \alpha \in \Lambda(m,n)} \Big\vert \varepsilon_\alpha \frac{m!}{\alpha!} (z^*)^\alpha \Big\vert \\
& \leq C_{z^{*}}^m \sup_{u \in B_{\ell_{r}^{n}}} \left| \sum_{ \alpha \in \Lambda(m,n)} \varepsilon_\alpha \frac{m!}{\alpha!} u^\alpha\right|_{\Pp(^{m} \ell_r^n)} 
\leq C_{z^{*},r}^m (\log(m) m! n)^{1 - \frac{1}{r}}.
\end{split}
\end{equation}
Taking the power $1/m$ and using Stirling's formula ($ m! \le \sqrt{2 \pi m} e^{\frac{1}{12m}} m^m e^{-m}$) yield
\begin{equation} \label{desigBayart}
 \sum_{j = 1}^n |z_j^*| \le C_{z^*,r} \left[\log(m)^{\frac{1}{m}} (2 \pi m)^{\frac{1}{2m}} e^{\frac{1}{12m^2}}  \frac{m}{e} n^{\frac{1}{m}}\right]^{1 - \frac{1}{r}}.   
\end{equation}
Finally, choosing $m = \lfloor \log(n+1) \rfloor$ gives that the term $\frac{1}{\log(n+1)^{1-\frac{1}{r}}} \sum_{k = 1}^n |z_n^*|$ (for every $n \ge 2$) is bounded independently of $n$, so $z \in m_{\Psi_r}$.
\end{proof}

\subsection{The lower inclusion \texorpdfstring{$\boldsymbol{m_{\Psi_r} \subset \mon H_b(\ell_r)}$}.}

We face now the proof of the lower inclusion in  Theorem~\ref{thm: main theorem hb}. The main tool is the following result, the proof of which requires some work, that we perform all along this section.

\begin{theorem}\label{teo: hypercontrac 1 < r <2}
Fix $1 < r \le 2$. For every $\varepsilon > 0$ there is $C_{r}=C_r(\varepsilon)>0$ such that for every $m,n \in \NN$, every $m$-homogeneous polynomial in $n$ complex variables  $P$ and every $z \in \mathbb{C}^{n}$, we have 
\[
 \sum_{\jj \in \Jj(m,n)} | c_\jj(P) z_\jj^{*} |  
\le C_r(\varepsilon) m^{2+\frac{1}{r}} ((1+\varepsilon)2e)^{\frac{m}{r}}\| \id: m_{\Psi_r} \to \ell_r \|^{m} 
\| z \|_{m_{\Psi_r}}^m \| P \|_{\Pp(^m \ell_r^n)}\,.
\]
\end{theorem}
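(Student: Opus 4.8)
\textbf{Proof plan for Theorem~\ref{teo: hypercontrac 1 < r <2}.}

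The target is a "hypercontractivity"-type inequality: we want to bound the $\ell_1$ sum of monomial coefficients against $z^*$ on $\Jj(m,n)$ by (essentially) a geometric constant times $\|z\|_{m_{\Psi_r}}^m \|P\|$. The natural strategy is to first reduce to the case $z = z^*$ decreasing (which costs nothing since we already have $z^*_{\jj}$ on the left), then to replace $z$ by the "model" sequence in $m_{\Psi_r}$ that dominates all unit-ball elements of $m_{\Psi_r}$ coordinatewise, and finally to prove the estimate for that fixed model sequence. Concretely, by Remark~\ref{rem: desig coord} (referenced in the introduction as governing the interplay between $\ell_r$ and $m_{\Psi_r}$) one expects a decreasing sequence $\omega = (\omega_k)_k$ with $\|\omega\|_{m_{\Psi_r}} \le 1$ such that every $z$ with $\|z\|_{m_{\Psi_r}} \le 1$, $z$ decreasing, satisfies $|z_k| \lesssim \omega_k$; after homogeneity one reduces to proving
\[
\sum_{\jj \in \Jj(m,n)} |c_\jj(P)| \, \omega_{\jj} \le C_r(\varepsilon)\, m^{2+1/r} \big((1+\varepsilon)2e\big)^{m/r} \,\|\id : m_{\Psi_r}\to\ell_r\|^{m} \, \|P\|_{\Pp(^m\ell_r^n)}.
\]

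The heart of the argument should be a duality / Hölder step combined with the comparison of $\Jj(m,n)$-indexed coefficient functionals with the polynomial norm. The plan is: (i) pass from $c_\jj(P)$ back to the symmetric $m$-linear form via $c_\alpha(P) = \frac{m!}{\alpha!}\check P(e_{j_1},\dots,e_{j_m})$ and split $\sum_\jj |c_\jj(P)|\omega_\jj = \sum_\jj \frac{m!}{\alpha!}|\check P(e_{j_1},\dots,e_{j_m})|\,\omega_\jj$, or equivalently symmetrize so as to sum over all of $\{1,\dots,n\}^m$ with a $\frac{1}{m!}$ factor; (ii) apply Hölder in the index variable together with the mixed-norm/unconditionality estimates for $m$-linear forms on $\ell_r$ — this is precisely where the constant $(2e)^{m/r}$ and the polynomial norm enter, and is the analogue of the classical Bohnenblust–Hille / Blei-type passage. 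One wants an inequality of the form $\sum_{j_1,\dots,j_m} |\check P(e_{j_1},\dots,e_{j_m})| \, x^{(1)}_{j_1}\cdots x^{(m)}_{j_m} \le K^m \|P\| \prod_i \|x^{(i)}\|_{\ell_{r'}}$ when $r \le 2$ (the mixed $(1,\dots,1)$-unconditionality over $\ell_r$), with $K$ of the right order; the factor $\omega_\jj$ gets distributed as $\omega_\jj^{1/m}$ to each slot, so one needs $\|\omega^{1/m}\|_{\ell_{r'}}$, and here is where $\|\id : m_{\Psi_r} \to \ell_r\|$ surfaces: $\|(\omega_k^{1/m})_k\|_{\ell_{r'}}^m$ should be comparable to $\|(\omega_k)_k\|_{\ell_{r/?}}$-type quantities controlled by the embedding norm, up to the polynomial factor $m^{2+1/r}$ coming from counting multiplicities $\frac{m!}{\alpha!} \le m!$ versus the $\frac{1}{m!}$ symmetrization and from summing over the finitely many "levels" of the decomposition.

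The step I expect to be the main obstacle is controlling the interaction between the combinatorial weight $\frac{m!}{\alpha!} = |[\alpha]|$ (the number of orderings of a multi-index) and the weight $\omega_\jj$ while keeping the constant geometric in $m$ rather than super-exponential. Naively bounding $\frac{m!}{\alpha!}$ by $m!$ and using Stirling would produce an unwanted $m^m$-type blow-up; instead one must exploit that the monomials where $\alpha$ has large repeated entries are few, and handle the "tetrahedral"/square-free part and the repeated part separately — this is exactly the even/pure-tetrahedral decomposition of multi-indices advertised in the introduction for Theorem~\ref{thm: main theorem hb}. So the plan is to split $P = \sum_{\text{levels}} P_{\text{level}}$ according to how many variables are repeated, bound each piece with the mixed-unconditionality estimate on $\ell_r$ (where for the square-free part one gets the clean constant, and the repeated part is absorbed into a lower-degree polynomial on fewer effective variables paying a factor governed by $\|\id: m_{\Psi_r}\to\ell_r\|$), and sum the $O(m)$-many levels, losing only the stated polynomial factor $m^{2+1/r}$. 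Finally, choosing the auxiliary parameter in the mixed-unconditionality estimate appropriately produces the $(1+\varepsilon)$ slack, and collecting constants yields $C_r(\varepsilon)$.
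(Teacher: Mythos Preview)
Your plan has a genuine gap at the central step. The inequality you propose,
\[
\sum_{j_1,\dots,j_m} |\check P(e_{j_1},\dots,e_{j_m})|\, x^{(1)}_{j_1}\cdots x^{(m)}_{j_m} \le K^m \|P\|_{\Pp(^m\ell_r)} \prod_i \|x^{(i)}\|_{\ell_{r'}},
\]
is false for $1<r<2$ (already for $m=1$: take $a_j=x_j=n^{-1/r'}$ and let $n\to\infty$). There is no global ``mixed $(1,\dots,1)$-unconditionality over $\ell_r$'' with $\ell_{r'}$-weights, and so the passage where you distribute $\omega_{\jj}$ as $\omega^{1/m}$ into each slot and then invoke $\|\omega^{1/m}\|_{\ell_{r'}}$ cannot be made to work. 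This is not a technicality: the whole difficulty is that one does \emph{not} have such a clean duality, and the paper's route is designed precisely to bypass it.

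What the paper actually does is quite different in structure. The only place the polynomial norm enters is through Lemma~\ref{lem: BDS} (the Bayart--Defant--Schl\"uters estimate \eqref{eq: BDS}), which controls $\big(\sum_{k\ge j_{m-1}} |c_{(\ii,k)}(P)|^{r'}\big)^{1/r'}$ by $m e^{1+(m-1)/r}|\ii|^{1/r}\|P\|$ for each fixed $(m-1)$-index $\ii$. One applies H\"older in the last variable only, then uses this lemma, and peels off the $(m-1)$-st coordinate as well (Lemma~\ref{lem: 1er lem 1 < r < 2}); this produces the factor $e^{m/r}$, the two powers $\|z\|_{m_{\Psi_r}}^2$, and leaves a purely combinatorial quantity $\sum_{\ii\in\Jj(m-2,k)} |z_\ii|\,|\ii|^{1/r}$ with a weight $\log(k+1)^{2/r'}k^{-1-1/r'}$ in $k$. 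Only \emph{then} does the tetrahedral/even decomposition come in (Lemmas~\ref{lem: tetra 1 < r < 2}--\ref{lem: general}), and it is applied to the index set $\Lambda(m-2,k)$, not to $P$: the tetrahedral piece contributes $k^{1/((1+\varepsilon)r')}$ (this is the origin of the $\varepsilon$), the even piece contributes $\|\id: m_{\Psi_r}\to\ell_r\|^{m-2}$, and the splitting itself costs $2^{(m-2)/r}$ via $|[\alpha]|\le 2^M|[\alpha_T]||[\alpha_E]|$. Summing the resulting convergent series in $k$ gives $C_r(\varepsilon)$. So the constants $(2e)^{m/r}$ and $\|\id\|^m$ do not come from a Bohnenblust--Hille/Blei step as you suggest; they arise from \eqref{eq: BDS} and from the even part of the decomposition, respectively. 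The ``model sequence'' reduction you propose is neither used nor needed.
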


Before we start with the proof of this result, let us see how, having it at hand, we can prove the lower inclusion we are aiming at.

\begin{proof}[Proof of the lower inclusion in Theorem~\ref{thm: main theorem hb}]
Choose $z \in m_{\Psi_r}$ and let us see that $z \in \mon H_{b}(\ell_{r})$. By Corollary~\ref{cor: familias de reordenamiento} we may assume without loss of generality $z=z^{*}$. Given $f \in H_b(\ell_r)$ (recall that we denote $P_m(f)$ for the $m$-homogeneous part of the Taylor expansion) and Theorem~\ref{teo: hypercontrac 1 < r <2} (with $\varepsilon = 1$) gives
\begin{multline*}
\sum_{\alpha \in \mathbb{N}_0^{(\mathbb{N})} } \vert  c_\alpha(f) z^\alpha \vert = \sup_{n \in \mathbb{N}} \sum_{m = 0}^\infty \sum_{\jj \in \mathcal J(m,n) } \vert c_{\jj}(f) z_{\jj} \vert \\
  \leq \sup_{n \in \mathbb{N}} \sum_{m = 0}^\infty  C_r m^{2+\frac{1}{r}} (4e)^{\frac{m}{r}} \| \id \|^m  \| z \|_{m_{\Psi_r}}^m 
  \sup_{u \in B_{\ell_{r}^{n}}} \left\vert \sum_{\jj \in \mathcal J(m,n) } c_{\jj}(f) u_{\jj} \right\vert \\
 = C_r \sum_{m = 0}^\infty (m^{(2+\frac{1}{r})\frac{1}{m}} (4e)^{\frac{1}{r}} \| \id \| \| z \|_{m_{\Psi_r}})^m \| P_m(f) \|_{\Pp(^m \ell_r)}.
\end{multline*}
Let us see that this sum is finite. Take $R>\sup_{m} \big( m^{(2+\frac{1}{r})\frac{1}{m}} (4e)^{\frac{1}{r}} \| \id \| \| z \|_{m_{\Psi_r}} \big)$, then by the homogeneity of $P_m(f)$
\begin{multline*}  
\sum_{m = 0}^\infty (m^{(2+\frac{1}{r})\frac{1}{m}} (4e)^{\frac{1}{r}} \| \id \| \| z \|_{m_{\Psi_r}})^m \| P_m(f) \|_{\Pp(^m \ell_r)} \\ 
= \sum_{m = 0}^\infty \left(\frac{m^{(2+\frac{1}{r})\frac{1}{m}} (4e)^{\frac{1}{r}} \| \id \| \| z \|_{m_{\Psi_r}}}{R}\right)^m \sup_{w \in R\cdot B_{\ell_r}} | P_m(f)(w)| \\
 \leq  \sum_{m = 0}^\infty \left(\frac{m^{(2+\frac{1}{r})\frac{1}{m}} (4e)^{\frac{1}{r} } \| \id \| \| z \|_{m_{\Psi_r}}}{R}\right)^m  \sup_{w \in R\cdot B_{\ell_r}} | f(w)|< \infty,
\end{multline*}
where the last step is due to Cauchy's inequality. This completes the proof.
\end{proof}

We start now the way to the proof of Theorem~\ref{teo: hypercontrac 1 < r <2}. We begin with a simple remark.

\begin{remark}\label{rem: desig coord}
If $z \in m_{\Psi_r}$, then
\[
n \vert z_n^{*} \vert \le \sum_{l = 1}^n z_l^* \le \| z \|_{m_{\Psi_r}} \log(n +1)^{\frac{1}{r'}}.
\]
That is
\[
\vert z_n^{*} \vert  \le \| z \|_{m_{\Psi_r}} \frac{\log(n+1)^\frac{1}{r'}}{n}
\]
for every $n \in \NN$. This gives
\[
\sum_{j=1}^{n} \vert z_{j} \vert^{r} \leq \sum_{j=1}^{n} \vert z_{j}^{*} \vert^{r} 
\leq \| z \|_{m_{\Psi_r}}^{r} \sum_{j=1}^{n}  \frac{\log(j+1)^\frac{r}{r'}}{j^{r}} \,.
\]
This implies $\Vert \id : m_{\Psi_r} \to \ell_{r} \Vert \leq \Big( \sum_{j=1}^{\infty}  \frac{\log(j+1)^\frac{r}{r'}}{j^{r}} \Big)^{1/r}$ (note that this series is convergent for $1 < r$).
\end{remark}

Our first ingredient is the following lemma, that follows with a careful analysis of the proof of \cite[Lemma~3.5]{bayart2016monomial}, that relates the summability of certain coefficients of a polynomial and its uniform norm in $\ell_r^n$. It has been very useful to provide a proof `at an elementary level' (in the sense that it does not require tools from the local theory of Banach space)  of the asymptotic growth of the unconditional constant of the space of $m$-homogeneous polynomials on $\ell_r^n$   as $n$ goes to infinite  with suitable care on the dependence of $m$ (in fact this has been proved for general index sets, see \cite[Theorem~3.2]{bayart2016monomial}). As a consequence the behaviour of the Bohr radii of holomorphic functions on $\ell_r$  for $1 \leq r \leq 2$ has been described in \cite[Theorem~3.9]{bayart2016monomial}. It has recently been used also to  study the asymptotic growth of the mixed Bohr radii in \cite{galicer2017mixed}. In some sense, for $1 \leq r \leq 2$, it plays  the role  of the  Bohnenblust--Hille inequality for the case  $r=\infty$.

\begin{lemma}\label{lem: BDS}
Let $1 \leq r \leq \infty$ and  $P$ be an $m$-homogeneous polynomial in $n$ variables. Then for each $\ii \in \Jj(m-1,n)$ with associated multi-index $\alpha(\mathbf i) \in \Lambda (m-1,n)$ we have
\begin{equation}\label{BDS modif}
        \left(\sum_{k=j_{m-1}}^n |c_{(\ii,k)}(P)|^{r'}\right)^{\frac{1}{r'}} \leq em \Big(\frac{(m-1)^{m-1}}{\alpha(\mathbf i)^{\alpha(\mathbf i)}}\Big)^{\frac{1}{r}} \|P\|_{\mathcal P(^m\ell_r^n)}.
    \end{equation}
\end{lemma}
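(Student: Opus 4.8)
\textbf{Plan of proof for Lemma~\ref{lem: BDS}.}

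The plan is to reduce the $m$-homogeneous case to a one-variable Bernstein/Markov-type estimate by isolating, for a fixed $\ii \in \Jj(m-1,n)$, the piece of $P$ that carries the monomials $z_\ii z_k$ for $k \ge j_{m-1}$. First I would recall that if $\check P$ denotes the symmetric $m$-linear form associated to $P$, then $c_{(\ii,k)}(P)$ is, up to the combinatorial factor coming from \eqref{strauss}, equal to $\check P(e_{j_1},\dots,e_{j_{m-1}},e_k)$; so the vector $\big(c_{(\ii,k)}(P)\big)_{k=j_{m-1}}^n$ is controlled (in the $\ell_{r'}$-norm) by the norm of the linear functional $w \mapsto \check P(e_{j_1},\dots,e_{j_{m-1}},w)$ on $\ell_r^n$, which by duality is exactly $\big\|\check P(e_{j_1},\dots,e_{j_{m-1}},\cdot)\big\|_{(\ell_r^n)^*}$. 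The point is then to bound this norm by $\|P\|_{\mathcal P(^m\ell_r^n)}$ times the stated constant. This is where the polarization-type inequality enters: evaluating $\check P$ at a fixed repeated direction $v := \sum_{l} t_l e_{j_l}$ (with the $t_l$ grouped according to the multi-index $\alpha(\ii)$) recovers a directional derivative of $P$, and a standard computation — essentially the sharp polarization constant for the monomial $t_1^{\alpha_1}\cdots$, which is $\frac{(m-1)^{m-1}}{\alpha(\ii)^{\alpha(\ii)}}$ after optimizing over the $t_l$ on the simplex — produces the factor $\big(\frac{(m-1)^{m-1}}{\alpha(\ii)^{\alpha(\ii)}}\big)^{1/r}$; the surviving $em$ is the one-dimensional Bernstein/Cauchy factor for passing from $P$ to its derivative on the unit ball of $\ell_r$.

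More concretely, I would argue as follows. Fix $\ii$ and $\alpha:=\alpha(\ii)\in\Lambda(m-1,n)$, and fix a unit vector $w\in\ell_r^n$ supported on $\{j_{m-1},\dots,n\}$ (so that the pairing picks out exactly the claimed coefficients). Consider the one-variable polynomial $g(\lambda) := P\big(\lambda w + \mu_1 e_{j_1}+\cdots\big)$ obtained by plugging in a suitable combination of the $e_{j_l}$'s with weights chosen on the probability simplex; expanding $P$ by the multinomial/Taylor formula, the coefficient of the appropriate monomial in $\lambda$ and the $\mu$'s is a multiple of $\check P(e_{j_1},\dots,e_{j_{m-1}},w)$, and extracting it by the Cauchy integral formula over the polytorus — using $\|P\|_{\mathcal P(^m\ell_r^n)}$ as the uniform bound on $\{$unit ball of $\ell_r^n\}$ — gives the estimate with the constant $em\big((m-1)^{m-1}/\alpha^\alpha\big)^{1/r}$ after optimizing the simplex weights (the optimum is attained at $\mu_l \propto \alpha_l/(m-1)$, producing $\alpha^\alpha/(m-1)^{m-1}$ and its reciprocal to the power $1/r$). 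Finally, taking the supremum over all such unit $w$ and invoking $\ell_{r'}$–$\ell_r$ duality on the coordinates $k=j_{m-1},\dots,n$ yields \eqref{BDS modif}. Since this mirrors \cite[Lemma~3.5]{bayart2016monomial}, I would cite that argument for the bookkeeping and only point out the (minor) modification: there the inequality is stated for a full index of length $m-1$ against all $n$ coordinates, whereas here the summation range is restricted to $k\ge j_{m-1}$, which only makes the left-hand side smaller, so the same proof applies verbatim.

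The main obstacle I anticipate is the combinatorial optimization that produces the exact constant $(m-1)^{m-1}/\alpha(\ii)^{\alpha(\ii)}$: one must carefully track the multinomial coefficients generated when a symmetric $m$-linear form is restricted to a repeated argument, identify which term in the expansion of $P(\lambda w + \sum_l \mu_l e_{j_l})$ isolates $\check P(e_{j_1},\dots,e_{j_{m-1}},w)$, and then verify that maximizing $\prod_l \mu_l^{\alpha_l}$ over $\sum_l \mu_l \le 1$ gives precisely $\alpha^\alpha/(m-1)^{m-1}$ (a Lagrange-multiplier computation, with the power $1/r$ coming from how the $\ell_r$-norm of $w$ interacts with the rescaling). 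Getting the $em$ factor, rather than something larger, requires using the sharp one-variable coefficient bound $|g^{(j)}(0)/j!| \le \|g\|_\infty$ together with the elementary estimate $(1+1/(m-1))^{m-1}\le e$; everything else is routine. I would therefore present the proof as a short adaptation of \cite[Lemma~3.5]{bayart2016monomial} with the restricted summation range noted explicitly.
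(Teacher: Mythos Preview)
Your plan is correct and matches the route the paper has in mind: the paper does not give an independent proof but states that the lemma ``follows with a careful analysis of the proof of \cite[Lemma~3.5]{bayart2016monomial}'', and your outline (pass to the symmetric $m$-linear form, extract the coefficient of $\prod_l \lambda_l^{\alpha_l}\mu$ in $P(\sum_l \lambda_l e_l+\mu w)$ via the Cauchy integral on a polytorus inside $B_{\ell_r^n}$, then optimize the radii) is exactly that argument, and the optimization you describe --- with the substitution $\mu_l=\rho_l^{\,r}$ reducing it to maximizing $\prod_l \mu_l^{\alpha_l}$ on the simplex and producing $\alpha(\ii)^{\alpha(\ii)}/(m-1)^{m-1}$, hence the factor $\big((m-1)^{m-1}/\alpha(\ii)^{\alpha(\ii)}\big)^{1/r}$ together with $(1+1/(m-1))^{m-1}\le e$ for the residual $em$ --- is the right computation. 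One small correction: the ``minor modification'' relative to \cite[Lemma~3.5]{bayart2016monomial} is not the summation range (that lemma already restricts to $k\ge j_{m-1}$, since $(\ii,k)\in\Jj(m,n)$ forces it) but rather that one keeps the sharper constant $\big((m-1)^{m-1}/\alpha(\ii)^{\alpha(\ii)}\big)^{1/r}$ instead of passing to the cruder $e^{(m-1)/r}|\ii|^{1/r}$; this is precisely what the paper means by ``careful analysis''.
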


Since $\frac{(m-1)^{m-1}}{\alpha(\mathbf i)^{\alpha(\mathbf i)}} \leq e^{m-1} \vert \mathbf{i} \vert$ we immediately have
\begin{equation}\label{eq: BDS}
   \left(\sum_{k=j_{m-1}}^n |c_{(\ii,k)}(P)|^{r'}\right)^{\frac{1}{r'}} \leq m e^{1+\frac{m-1}r} |\ii|^{\frac{1}{r}} \|P\|_{\mathcal P(^m\ell_r^n)}.
\end{equation}
This is in fact the statement of \cite[Lemma~3.5.]{bayart2016monomial}. 
With it we can give the first step towards the proof of Theorem~\ref{teo: hypercontrac 1 < r <2}.

\begin{lemma}\label{lem: 1er lem 1 < r < 2}
Let $1 < r \le 2$, there is $A_{r}>0$ such that for every $m,n \in \NN$, every $P \in \Pp(^m \CC^n)$ and every decreasing $z \in \mathbb{C}^{n}$ we have
\[
 \sum_{\jj \in \Jj(m,n)} | c_\jj(P) z_\jj |  
\le A_{r} m^{1+\frac{1}{r}} e^{\frac{m}{r}}  \| z \|_{m_{\Psi_r}}^2 
\left( \sum_{k = 1}^n \frac{\log(k+1)^\frac{2}{r'}}{k^{1 + \frac{1}{r'}}} 
\sum_{\ii \in \Jj(m-2,k)} |z_\ii| |\ii|^{\frac{1}{r} } \right)  \| P \|_{\Pp(^m \ell_r^n)}.
\]
\end{lemma}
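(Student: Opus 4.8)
The plan is to bound the left-hand sum $\sum_{\jj \in \Jj(m,n)} |c_\jj(P) z_\jj|$ by peeling off the two largest indices from each $\jj \in \Jj(m,n)$ and applying the Khinchin-type estimate of Lemma~\ref{lem: BDS} (in the form \eqref{eq: BDS}) twice. Write each $\jj \in \Jj(m,n)$ as $(\ii, k, l)$ with $\ii \in \Jj(m-2,k)$ and $k \le l \le n$; more precisely, group the sum according to the two top entries. First I would fix the last entry $l$: inside $\sum_\jj$, separate $\sum_l z_l^{*}\big(\sum_{(\ii,k):\, k\le l} |c_{(\ii,k,l)}(P)|\, |z_{(\ii,k)}|\big)$, and apply H\"older with exponents $r'$ and $r$ to the inner $k$-sum against the coefficients, invoking \eqref{eq: BDS} applied to the $(m-1)$-homogeneous "slice" polynomial $P_l$ whose coefficients are $c_{(\cdot, l)}(P)$ — note $\|P_l\|$ is controlled by $\|P\|_{\Pp(^m\ell_r^n)}$ up to the combinatorial factor, and $|(\ii,k)|^{1/r}$ emerges. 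Then repeat: the surviving sum over $(\ii,k)$ with the weight $|(\ii,k)|^{1/r}$ gets its top entry $k$ peeled off again via \eqref{eq: BDS}, producing a second factor of $m e^{1+\frac{m-1}{r}}$ and the weight $|\ii|^{1/r}$ together with $z_k^{*}$.

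The key steps in order: (1) decompose $\Jj(m,n) \ni \jj \mapsto (\ii,k,l)$ and reorganize the sum so the two outermost indices $l \ge k$ are isolated with $z_l^*, z_k^*$ in front; (2) apply H\"older $(r',r)$ in the $l$-variable and use \eqref{eq: BDS} on the partial-derivative polynomial to extract $m e^{1+\frac{m-1}{r}}$, a factor $|(\ii,k)|^{1/r}$, and $\|P\|_{\Pp(^m\ell_r^n)}$; what is left is a sum $\sum_l z_l^* (\cdots)$ which, using Remark~\ref{rem: desig coord} to bound $z_l^* \le \|z\|_{m_{\Psi_r}} \frac{\log(l+1)^{1/r'}}{l}$, contributes one factor $\log(l+1)^{1/r'}/l$ and one $\|z\|_{m_{\Psi_r}}$; (3) apply H\"older and \eqref{eq: BDS} a second time in the $k$-variable, producing the second $m e^{1+\frac{m-1}{r}}$, the weight $|\ii|^{1/r}$, and — again by Remark~\ref{rem: desig coord} — a factor $\log(k+1)^{1/r'}/k$ and the second $\|z\|_{m_{\Psi_r}}$; (4) collect constants: two copies of $m e^{1+\frac{m-1}{r}}$ give $m^2 e^{2}e^{\frac{2(m-1)}{r}} \le A_r m^2 e^{\frac{2m}{r}}$, and the combinatorial passage between $c_\jj$, $z_\jj$ (with multiplicities $|\jj|$) and the slice polynomials must be tracked carefully so that no extra $m$-dependent blow-up appears beyond the stated $m^{1+1/r}$; (5) rename the remaining double sum over $k$ and $\ii \in \Jj(m-2,k)$, absorbing the two weights $\log(k+1)^{1/r'}/k$ and $\log(l+1)^{1/r'}/l$ into $\sum_{k=1}^n \frac{\log(k+1)^{2/r'}}{k^{1+1/r'}} \sum_{\ii\in\Jj(m-2,k)} |z_\ii| |\ii|^{1/r}$, which is exactly the target (the inner sum over $l \ge k$ of $\log(l+1)^{1/r'}/l$ is comparable to $\log(k+1)^{1/r'}$, supplying the missing $1/r'$ in the exponent of $\log(k+1)$ and the extra $1/k^{1/r'}$... here one must be slightly careful about whether the weight comes out as $\log(k+1)^{2/r'}/k^{1+1/r'}$ or with a tail estimate $\sum_{l\ge k}\log(l+1)^{1/r'}/l \asymp \log(k+1)^{1+1/r'}$ — I would verify the bookkeeping gives precisely $\frac{\log(k+1)^{2/r'}}{k^{1+1/r'}}$ as claimed).

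The main obstacle I anticipate is the combinatorial/normalization bookkeeping in step (4)–(5): relating the coefficients $c_{(\ii,k,l)}(P)$ of the full polynomial to those of its iterated partial derivatives (or one-variable slices) involves the multiplicities $|\jj| = m!/\alpha!$ from \eqref{strauss}, and one has to be sure these do not contribute a factor worse than $m^{1+1/r}$. Concretely, \eqref{eq: BDS} is a statement about $\ell_{r'}$-sums of $c_{(\ii,k)}(P)$ for an $m$-homogeneous $P$, and applying it twice (first in $l$, then in $k$) requires that after the first peel the remaining object is genuinely an $(m-1)$-homogeneous polynomial in the right sense with norm $\lesssim \|P\|_{\Pp(^m\ell_r^n)}$ — the derivative/restriction map has norm controlled, but the precise constant (a single $m$ from differentiation, as in the proof of Lemma~\ref{relacion mon}) must be isolated. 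A secondary obstacle is the estimate $\sum_{l=k}^{n} \frac{\log(l+1)^{1/r'}}{l} \lesssim \log(k+1)^{1/r'} \cdot \log(n/k)$ vs. the cleaner claimed weight; I expect the resolution is that one does \emph{not} sum over $l$ freely but keeps $z_l^*$ and uses monotonicity ($z_l^* \le z_k^*$ since $l \ge k$ and $z$ decreasing) to collapse the $l$-dependence before invoking Remark~\ref{rem: desig coord}, which is cheaper and yields exactly the stated exponents. Everything else is Hölder's inequality plus the convergent series in Remark~\ref{rem: desig coord}.
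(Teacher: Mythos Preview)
Your plan has a genuine gap: you cannot apply \eqref{eq: BDS} twice. The inequality \eqref{eq: BDS} converts an $\ell_{r'}$-sum of polynomial coefficients into a bound by $\|P\|_{\Pp(^m\ell_r^n)}$; once you have invoked it in step~(2), the coefficients $c_\jj(P)$ are gone and what remains is the purely combinatorial sum
\[
\sum_{\jj'\in\Jj(m-1,n)} |z_{\jj'}|\,|\jj'|^{1/r}\Big(\sum_{l\ge j'_{m-1}}|z_l|^r\Big)^{1/r}.
\]
There is no polynomial here to which a second instance of \eqref{eq: BDS} could be applied. Your step~(3), ``apply H\"older and \eqref{eq: BDS} a second time in the $k$-variable'', therefore has no content. (Your first paragraph seems to sense this, proposing instead to apply \eqref{eq: BDS} to a slice $P_l$, but then step~(2) reverts to H\"older in $l$; the two descriptions are inconsistent.) Even granting a second application by some device, two copies of $m\,e^{1+(m-1)/r}$ would produce $m^2 e^{2m/r}$, strictly worse than the claimed $m^{1+1/r}e^{m/r}$.

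What the paper actually does after the single application of \eqref{eq: BDS} is elementary. Writing $\jj'=(\ii,k)$ with $\ii\in\Jj(m-2,k)$, it uses the trivial bound $|(\ii,k)|\le (m-1)\,|\ii|$, which is where the extra factor $(m-1)^{1/r}$ comes from (and explains why the final exponent is $1+\tfrac1r$, not $2$). The remaining factor $|z_k|\big(\sum_{l\ge k}|z_l|^r\big)^{1/r}$ is then bounded pointwise: Remark~\ref{rem: desig coord} gives $|z_j|\le \|z\|_{m_{\Psi_r}}\log(j+1)^{1/r'}/j$ for each $j$, and an integral comparison shows
\[
\sum_{l\ge k}\frac{\log(l+1)^{r/r'}}{l^r}\;\lesssim_r\;\frac{\log(k+1)}{k^{r-1}},
\]
which assembles into $|z_k|\big(\sum_{l\ge k}|z_l|^r\big)^{1/r}\lesssim_r \|z\|_{m_{\Psi_r}}^{2}\,\log(k+1)^{2/r'}/k^{1+1/r'}$. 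This is precisely the weight in the statement; no second H\"older and no second BDS are needed.
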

\begin{proof}
Consider $P  \in \Pp(^m \CC^n)$ as in \eqref{eternidade} and $z \in \mathbb{C}^{n}$ decreasing. Using first H\"older's inequality and then  \eqref{eq: BDS} we have 
\begin{equation}\label{desig 1er lemma 1 < r < 2}
\begin{split}
\sum_{\jj \in \Jj(m,n)} | c_\jj(P) z_\jj | 
    & = \sum_{\jj \in \Jj(m-1,n)}  \sum_{j_m = j_{m-1}}^n | c_{(\jj,j_m)}(P) z_\jj z_{j_m} | \\
	& \le \sum_{\jj \in \Jj(m-1,n)} |z_\jj| \Big( \sum_{j_m = j_{m-1}}^n |c_{(\jj,j_m)}(P)|^{r'} \Big)^{\frac{1}{r'}}  \Big( \sum_{j_m = j_{m-1}}^n |z_{j_m}|^{r} \Big)^{\frac{1}{r}}\\
	& \le e^{1-\frac{1}{r}} m e^{\frac{m}{r}} \| P \|_{\Pp(^m \ell_r)} \sum_{\jj \in \Jj(m-1,n)} |z_\jj| |\jj|^{\frac{1}{r}} \Big( \sum_{ j_m = j_{m-1}}^n |z_{j_m}|^{r} \Big)^{\frac{1}{r}} \\
	& =  e^{1-\frac{1}{r}} m e^{\frac{m}{r}} \| P \|_{\Pp(^m \ell_r)} \sum_{j_{m-1}=1}^n |z_{j_{m-1}}| \sum_{\ii \in \Jj(m-2,j_{m-1})} |z_\ii| |(\ii,j_{m-1})|^{\frac{1}{r}} \Big( \sum_{ j_m = j_{m-1}}^n |z_{j_m}|^{r} \Big)^{\frac{1}{r}}  \\
    &  \le e^{1-\frac{1}{r}} m e^{\frac{m}{r}} \| P \|_{\Pp(^m \ell_r)} (m-1)^{\frac{1}{r}} \sum_{j_{m-1}=1}^n |z_{j_{m-1}}| \Big( \sum_{j_m = j_{m-1} }^n |z_{j_m}|^r \Big)^{\frac{1}{r}} \sum_{\ii \in \Jj(m-2,j_{m-1})} |z_\ii| |\ii|^\frac{1}{r} ,
\end{split}
\end{equation}
where the last inequality is due to the fact that $|(\ii,j_{m-1})| \le (m-1) |\ii|$ for every $ \ii \in \Jj(m-2, j_{m-1})$.

We now bound the factor $|z_{j_{m-1}}| \Big( \sum_{j_m = j_{m-1} }^n |z_{j_m}|^r \Big)^{\frac{1}{r}} $. For each $1 \leq j \leq n$ we use Remark~\ref{rem: desig coord} to obtain (note that $\frac{r}{r'}-1=r-2 \leq 0$).
\begin{multline*}
    |z_{j}| \Big( \sum_{k = j}^n |z_{k}|^r \Big)^{\frac{1}{r}} 
    \le  \| z \|_{m_{\Psi_r}}^2 \frac{\log(j+1)^\frac{1}{r'}}{j} 
\Big( \sum_{k = j}^n \frac{\log(k+1)^{\frac{r}{r'}}}{k^r}  \Big)^{\frac{1}{r}} \\
    \le \| z \|_{m_{\Psi_r}}^2 \frac{\log(j+1)^\frac{1}{r'}}{j} 
\log(j+1)^{\frac{1}{r'} - \frac{1}{r}} 
\Big( \sum_{k = j}^n \frac{\log(k+1)}{k^r}  \Big)^{\frac{1}{r}} \,.
    \end{multline*}
We deal with the last sum
\begin{multline*}
    \sum_{k= j}^n \frac{\log(k+1)}{k^r} \leq \Big( 1 + \frac{1}{j} \Big)^{r} \sum_{k= j}^n \frac{\log(k+1)}{(k+1)^r} 
\leq 2^{r} \sum_{k = j+1 }^{n+1} \frac{\log(k)}{k^r} 
\leq 2^{r+2} \int_{j}^{n +1}\frac{\log(x)}{x^r} dx 
\\
\leq  2^{r+2} \frac{(r-1) \log(j) +1}{(r-1)^{2} j^{r-1}}
\leq 2^{r+2} \frac{2r}{(r-1)^{2}}  \frac{\log(j+1)}{j^{r-1}}   \,,
\end{multline*}
and
\[
    |z_{j}| \Big( \sum_{k = j}^n |z_{k}|^r \Big)^{\frac{1}{r}}  
\le 2^{r+2} \frac{2r}{(r-1)^{2}}  \| z \|_{m_{\Psi_r}}^2 \frac{\log(j+1)^\frac{2}{r'}}{j^{1+\frac{1}{r'}}} 
\]
This and \eqref{desig 1er lemma 1 < r < 2} give the conclusion
\end{proof} 

In view of Lemma~\ref{lem: 1er lem 1 < r < 2}, now we need to bound $\sum_{\ii \in \Jj(m-2,k)} |z_\ii| |\ii|^{\frac{1}{r} }$ in a suitable way (depending on $k$). To this purpose we switch to the $\alpha$-notation of multi-indices (recall \eqref{eternidade}), that is going to be more convenient. Then the sum reads
\begin{equation} \label{pontes}
\sum_{\alpha \in \Lambda (m-2,k)} \vert z \vert^{\alpha} \vert [\alpha] \vert
\end{equation}
and the strategy is to decompose this sum into two sums: a tetrahedral and an even part and, then, bound each one of these.  This lies in the general philosophy of decomposing index sets into some smaller subset in which a certain problem results easier and, at the same time, are the bricks in which any general index can be recovered. This philosophy has alredy been used in \cite{galicer2017mixed}.\\

Let us be more precise and introduce some notation. A multi-index $\alpha$ is tetrahedral if all its entries are either $0$ or $1$. We consider the set  of tetrahedral multi-indices
\[
\Lambda_T(m,n) = \big\lbrace \alpha \in \Lambda(m,n) : \alpha_i \in \{0,1\}  \big\rbrace.
\]
A multi-index is called even if all its non-zero entries are even (note that this forces the multi-index to have even order). We consider then the set 
\[
\Lambda_E(m,n) = \big\lbrace \alpha \in \Lambda(m,n) :  \alpha_i \text{ is even for every } i=1,\ldots,n \big\rbrace.
\]
Observe that for every $\alpha \in \Lambda_E(m,n)$ there is a unique $\beta \in \Lambda(m/2,n)$ such that $\alpha = 2 \beta$.

\begin{remark}\label{rem: fact indices}
Given $\alpha \in \Lambda (M,N)$ define $\alpha_{T}$ (the tetrahedral part) and $\alpha_{E}$ (the even part) as
\[
\big(\alpha_{T} \big)_{i} = \begin{cases}
1 & \text{ if } \alpha_{i} \text{ is odd} \\
0 & \text{ if } \alpha_{i} \text{ is even} 
\end{cases}
\quad \text{ and } \quad
\big(\alpha_{E} \big)_{i} = \begin{cases}
\alpha_{i}-1 & \text{ if } \alpha_{i} \text{ is odd} \\
\alpha_{i} & \text{ if } \alpha_{i} \text{ is even} 
\end{cases}
\,.
\]
If $0 \leq k \leq M$ is the number of odd entries in $\alpha$, then clearly $\alpha_{T} \in \Lambda_T(k,N)$  and $\alpha_E \in \Lambda_E(M-k,N)$ and $\alpha = \alpha_T + \alpha_E$. 
As $(\alpha_E)_i \le \alpha_i$ for every $i$ then $\alpha_E! \le \alpha!$. On the other hand, $\alpha_T! = 1$, then $\alpha_T! \alpha_E! \le \alpha!$, and
\[
|[\alpha]| = \frac{M!}{\alpha!} \le \frac{M!}{\alpha_T! \alpha_E!} = \frac{M!}{(M-k)! k!} \frac{k!}{\alpha_T!} \frac{(M-k)!}{\alpha_E!} = \left( \genfrac{}{}{0pt}{}{M}{k} \right) |[\alpha_T]||[\alpha_E]|  
\le 2^M |[\alpha_T]||[\alpha_E]|.
\]
\end{remark}

Our next step is to bound a sum as in \eqref{pontes} when we just consider even or tetrahedral indices. We start with the latter.

\begin{lemma}\label{lem: tetra 1 < r < 2}
For every $1 < r \le 2$ and $M,N \in \NN$, and every decreasing $z \in \mathbb{C}^{N}$ we have 
\[
\sum_{\alpha \in \Lambda_T(M,N)} |z^\alpha| |[\alpha]|^{\frac{1}{r}} \le  2 (1+\varepsilon)^{\frac{M}{r'}} 
\| z\|_{m_{\Psi_r}}^M N^{\frac{1}{(1 +\varepsilon)r'}},
\]
for every $\varepsilon > 0$ and
\[
\sum_{\alpha \in \Lambda_E(M,N)} |z^\alpha| |[\alpha]|^{\frac{1}{r}} 
\le \| z \|_{\ell_r}^M \le  \| \id : m_{\Psi_r} \to \ell_r \|^M \| z\|_{m_{\Psi_r}}^M.
\]
\end{lemma}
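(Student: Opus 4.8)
The two bounds require rather different treatments, so I would handle them separately.

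\smallskip

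\emph{The even part.} This is the easy direction. If $\alpha \in \Lambda_E(M,N)$, write $\alpha = 2\beta$ with $\beta \in \Lambda(M/2,N)$. Since $z$ is decreasing (and in particular we may assume $|z| = z^*$) the plan is to show that the multinomial-type sum collapses into a power of the $\ell_r$-norm. The key observation is that $|z^\alpha| |[\alpha]|^{1/r} = |z^{2\beta}| |[\alpha]|^{1/r} = |(z^2)^\beta|\,|[\alpha]|^{1/r}$, and one wants to compare $\sum_{\beta} |(z^2)^\beta| |[\alpha]|^{1/r}$ with $\left(\sum_j |z_j|^r\right)^{M/r} = \left(\sum_j (|z_j|^2)^{r/2}\right)^{M/r}$. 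Since $r/2 \le 1$ for $1 < r \le 2$, the map $t \mapsto t^{r/2}$ is subadditive, so $\left(\sum_j (|z_j|^2)^{r/2}\right)^{M/r} \ge \sum_{\beta \in \Lambda(M/2,N)} \frac{(M/2)!}{\beta!} (|z_j|^2)^\beta$ raised to an appropriate power — more carefully, one uses that $|[2\beta]| = \frac{M!}{(2\beta)!} \le \frac{(M/2)!^2 \, 2^M}{\beta!^2} $ or a cleaner bound, and that $|[2\beta]|^{1/r} \le |[2\beta]|$ when $|[2\beta]|\ge 1$, to dominate the sum term-by-term by the multinomial expansion of $\|z\|_{\ell_r}^M$. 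The final inequality $\|z\|_{\ell_r}^M \le \|\id : m_{\Psi_r}\to \ell_r\|^M \|z\|_{m_{\Psi_r}}^M$ is immediate from Remark~\ref{rem: desig coord} once $z \in m_{\Psi_r}$; if $z \notin m_{\Psi_r}$ the right-hand side is interpreted as $+\infty$ and there is nothing to prove.

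\smallskip

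\emph{The tetrahedral part.} Here $\alpha_i \in \{0,1\}$, so $z^\alpha = \prod_{i \in S} z_i$ for the support set $S$ with $\card(S) = M$, and $|[\alpha]| = M!$ for every such $\alpha$. Thus
\[
\sum_{\alpha \in \Lambda_T(M,N)} |z^\alpha| |[\alpha]|^{1/r} = (M!)^{1/r} \sum_{\substack{S \subset \{1,\dots,N\} \\ \card(S) = M}} \prod_{i \in S} |z_i| = (M!)^{1/r} \, e_M(|z_1|,\dots,|z_N|),
\]
where $e_M$ is the $M$-th elementary symmetric polynomial. The plan is to bound $e_M$ using the Maclaurin-type inequality $e_M(|z|) \le \binom{N}{M} \left(\frac{p_1(|z|)}{N}\right)^M$ where $p_1(|z|) = \sum_{j=1}^N |z_j|$, OR, more in the spirit of this paper, to split the sum at an index $k_0 \sim N^{1/(1+\varepsilon)}$ (or similar), using $z$ decreasing: for the ``large'' coordinates $i \le k_0$ bound the product trivially by $|z_{k_0}|^{(\text{count})}$ and count subsets, and for the ``small'' tail coordinates use $\sum_{j > k_0} |z_j| \le \|z\|_{m_{\Psi_r}} \log(N+1)^{1/r'}$ from the $m_{\Psi_r}$ norm. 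The crucial quantitative input is the bound $|z_n^*| \le \|z\|_{m_{\Psi_r}} \frac{\log(n+1)^{1/r'}}{n}$ from Remark~\ref{rem: desig coord}. Combining, one wants $(M!)^{1/r}$ times the subset count times the appropriate power of $\|z\|_{m_{\Psi_r}}$ and of the logarithm to telescope against $N^{1/((1+\varepsilon)r')}$ after using Stirling on $M!$ and on $\binom{N}{M}$; the $\varepsilon$ and the extra factor $2$ in $2(1+\varepsilon)^{M/r'}$ are the slack absorbed in this estimate.

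\smallskip

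\emph{Main obstacle.} The tetrahedral estimate is the hard part. The difficulty is getting the exponent of $N$ down to $\frac{1}{(1+\varepsilon)r'}$ — note it does \emph{not} grow with $M$ — while only paying $(1+\varepsilon)^{M/r'} \|z\|_{m_{\Psi_r}}^M$ in $M$-dependent factors, and in particular making the factor $(M!)^{1/r}$ (which is \emph{large}) cancel. The cancellation must come from $e_M(|z|)$ being tiny: since $z$ is decreasing and in $m_{\Psi_r}$, the product $\prod_{i\in S} |z_i|$ over an $M$-element set $S$ is smallest when $S$ consists of large indices, and even the best case $S = \{1,\dots,M\}$ gives $\prod_{i=1}^M |z_i| \le \|z\|_{m_{\Psi_r}}^M \prod_{i=1}^M \frac{\log(i+1)^{1/r'}}{i} = \|z\|_{m_{\Psi_r}}^M \frac{(\log\text{-stuff})}{M!}$, which is exactly what kills $(M!)^{1/r}$ — or rather $(M!)^{1/r}/M! = (M!)^{-1/r'}$, leaving room to spare. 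Carefully optimizing the split point and controlling $\prod_{i=1}^M \log(i+1)^{1/r'}$ (which is roughly $(M!)^{\text{something small}}$, harmless after taking logs) against the surviving $N^{1/((1+\varepsilon)r')}$ via an integral comparison is the technical heart; the $\varepsilon$ is precisely what lets one be wasteful in this comparison so that no genuine optimization over the split point is needed.
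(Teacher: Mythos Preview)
Your tetrahedral plan is right in its first option but you have undersold it. The Maclaurin-type bound (or, equivalently, just $M!\,e_M(|z|)\le\bigl(\sum_{k=1}^N|z_k|\bigr)^M$) already gives
\[
\sum_{\alpha\in\Lambda_T(M,N)}|z^\alpha|\,|[\alpha]|^{1/r}=(M!)^{1/r}e_M(|z|)=\frac{M!\,e_M(|z|)}{(M!)^{1/r'}}\le\frac{1}{(M!)^{1/r'}}\Bigl(\sum_{k=1}^N|z_k|\Bigr)^M\le\frac{\|z\|_{m_{\Psi_r}}^M\log(N+1)^{M/r'}}{(M!)^{1/r'}},
\]
and then one only needs the elementary calculus fact $\log(N)^M\le N^{1/(1+\varepsilon)}\bigl((1+\varepsilon)M/e\bigr)^M$ together with $M!\ge(M/e)^M$ to finish. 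This is exactly the paper's proof; no splitting at an index $k_0$, no per-term analysis of $\prod_{i\in S}|z_i|$ is needed, and your ``Main obstacle'' paragraph is chasing a cancellation mechanism that is already handled by the single line above. The $(M!)^{1/r}$ is killed by the multinomial bound, not by smallness of individual products.

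For the even part your sketch has the right skeleton but two of the steps you list are misleading: the bound $|[2\beta]|^{1/r}\le|[2\beta]|$ is far too wasteful, and the bound $|[2\beta]|\le 2^M|[\beta]|^2$ would introduce an unwanted $2^{M/r}$. The paper instead proves the \emph{sharp} inequality $|[2\beta]|\le|[\beta]|^2$ (via $\binom{M}{M/2}\prod_i\binom{2\beta_i}{\beta_i}^{-1}\le 1$, since $2^M=\prod_i 2^{2\beta_i}\le\prod_i\binom{2\beta_i}{\beta_i}$), and then writes
\[
|(z^2)^\beta|\,|[\beta]|^{2/r}=\bigl(|(z^r)^\beta|\,|[\beta]|\bigr)^{2/r},
\]
after which $2/r\ge 1$ lets one pass from the $\ell_{2/r}$-sum to the $\ell_1$-sum, and the multinomial identity gives $\bigl(\sum_\beta|(z^r)^\beta|\,|[\beta]|\bigr)^{2/r}=\|z\|_{\ell_r}^M$ on the nose. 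Your subadditivity remark about $t\mapsto t^{r/2}$ points in this direction, but the clean move is this $\ell_1\hookrightarrow\ell_{2/r}$ step, not a term-by-term comparison.
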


\begin{proof}
We begin with the first inequality, observing that it is obvious if $N=1$. We may, then, assume $N \geq 2$. Then, given $\alpha \in \Lambda_T(M,N)$, note that $\alpha !=1$ and $|[\alpha]|$ is exactly $M!$. Then,  
\begin{multline*}
\sum_{\alpha \in \Lambda_T(M,N)} |z^\alpha| |[\alpha]|^{\frac{1}{r}}
	 = \sum_{\alpha \in \Lambda_T(M,N)} |z^\alpha| |[\alpha]| \frac{1}{|[\alpha]|^{\frac{1}{r'}}} 
     \le \Big( \sum_{k=1}^N |z_k| \Big)^{M} \frac{1}{M!^{\frac{1}{r'}}} \\
     \le \| z \|_{m_{\Psi_r}}^M \log(N+1)^{\frac{M}{r'}} \frac{1}{M!^{\frac{1}{r'}}} 
     \le 2 \| z \|_{m_{\Psi_r}}^M \Big( \frac{\log(N)^{M}}{M!}\Big)^{\frac{1}{r'}}  \,.
\end{multline*}
A simple calculus argument shows that  the function $f:[1,\infty[  \rightarrow \RR$ given by $f(x)= \frac{\log(x)^{M}}{x^{1/(1 + \varepsilon)}}$ is bounded by $\big(\frac{(1+\varepsilon)M}{e} \big)^{M}$, then $ \log(N)^M \le N^{1/(1+\varepsilon)}  \big(\frac{(1+\varepsilon)M}{e} \big)^{M}$ . On the other hand 
$M! \ge \left( \frac{M}{e} \right)^M$.This gives the conclusion.\\
For the proof of the second inequality let us recall first that for each $\alpha \in \Lambda_E(M,N)$ there is a unique $\beta \in {\Lambda}(M/2,N)$ such that $\alpha = 2 \beta$ and, moreover,
\[
|[\alpha]| = \frac{M!}{\alpha_1 ! \cdots \alpha_N!} = \Big(\frac{(M/2)!}{\beta_1 ! \cdots \beta_N!}\Big)^2 \frac{M!}{(M/2)!(M/2)!} \prod_{i=1}^N \frac{\beta_i! \beta_i!}{(2\beta_i)!} \le |[\beta]|^2,
\]
where last inequality holds because $2^k \le \frac{(2k)!}{k!^2} \le 2^{2k}$ and then
\[
\frac{M!}{(M/2)!(M/2)!} \prod_{i=1}^N \frac{\beta_i! \beta_i!}{(2\beta_i)!} \le 2^M \prod_{i=1}^N \frac{1}{2^{\beta_i}} = 1.
\]
Then (note that, since $2/r \ge 1$, the $\ell_1$ norm bounds the $\ell_{2/r}$ norm)
\begin{multline*}
\sum_{\alpha \in \Lambda_E(M,N)} |z^\alpha| |\alpha|^{\frac{1}{r}} 
	 \le \dis\sum_{\beta \in {\Lambda}(M/2,N)} |(z^2)^\beta| |\beta|^{2/r} 
     =  \dis\sum_{\beta \in {\Lambda}(M/2,N)} \Big( |(z^r)^\beta| |\beta| \Big)^{2/r}\\
     \le \Big( \dis\sum_{\beta \in {\Lambda}(M/2,N)} |(z^r)^\beta| |\beta| \Big)^{2/r} 
     =  \Big( \sum_{l=1}^N |z_l|^r \Big)^{M/r} 
     \le \|\id : m_{\Psi_r} \to \ell_r \|^M  \| z \|_{m_{\Psi_r}}^M \,.
\end{multline*}
\end{proof}

\begin{lemma}\label{lem: general}
Given $1 < r \le 2$ there is a constant $K_{r}\geq 1$ such that for every  $M,N \in \NN$, and every decreasing $z \in \mathbb{C}^{N}$ we have 
\[
\sum_{\alpha \in \Lambda(M,N)} |z^\alpha| |[\alpha]|^{\frac{1}{r}} 
\le K_{r} (M+1) (1+\varepsilon)^{\frac{M}{r'}} 2^{\frac{M}{r}+1} N^{\frac{1}{(1+\varepsilon)r'}} \|\id : m_{\Psi_r} \to \ell_r \|^M \| z\|_{m_{\Psi_r}}^M,
\]
for every $\varepsilon>0$.
\end{lemma}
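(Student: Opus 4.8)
The plan is to reduce the general sum over $\Lambda(M,N)$ to the two special cases already handled in Lemma~\ref{lem: tetra 1 < r < 2}, using the tetrahedral/even decomposition of a multi-index from Remark~\ref{rem: fact indices}. Given $\alpha \in \Lambda(M,N)$ with $k$ odd entries, we write $\alpha = \alpha_T + \alpha_E$ with $\alpha_T \in \Lambda_T(k,N)$ and $\alpha_E \in \Lambda_E(M-k,N)$, and we use the bound $|[\alpha]| \le 2^M |[\alpha_T]||[\alpha_E]|$ established there, hence $|[\alpha]|^{1/r} \le 2^{M/r} |[\alpha_T]|^{1/r} |[\alpha_E]|^{1/r}$. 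Also $|z^\alpha| = |z^{\alpha_T}| \, |z^{\alpha_E}|$. The key combinatorial point is that the map $\alpha \mapsto (\alpha_T, \alpha_E)$ is injective, so summing over all $\alpha \in \Lambda(M,N)$ is dominated by summing independently: for each $k \in \{0,1,\dots,M\}$ we let $\alpha_T$ range over $\Lambda_T(k,N)$ and $\alpha_E$ over $\Lambda_E(M-k,N)$, which gives
\[
\sum_{\alpha \in \Lambda(M,N)} |z^\alpha||[\alpha]|^{\frac{1}{r}}
\le 2^{\frac{M}{r}} \sum_{k=0}^{M} \Big( \sum_{\alpha \in \Lambda_T(k,N)} |z^\alpha||[\alpha]|^{\frac{1}{r}} \Big)
\Big( \sum_{\alpha \in \Lambda_E(M-k,N)} |z^\alpha||[\alpha]|^{\frac{1}{r}} \Big).
\]

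Next I would simply insert the two estimates from Lemma~\ref{lem: tetra 1 < r < 2}. The tetrahedral factor is at most $2(1+\varepsilon)^{k/r'} \|z\|_{m_{\Psi_r}}^k N^{1/((1+\varepsilon)r')}$ and the even factor is at most $\|\id: m_{\Psi_r}\to\ell_r\|^{M-k}\|z\|_{m_{\Psi_r}}^{M-k}$. Multiplying, the $\|z\|_{m_{\Psi_r}}$ powers recombine to $\|z\|_{m_{\Psi_r}}^M$; since $1 < r \le 2$ we have $\|\id: m_{\Psi_r}\to\ell_r\| \ge 1$ (indeed $K_r := \max(1,\|\id\|)$ absorbs any slack, though in fact $\|\id\|\ge 1$ holds directly because $\|e_1\|_{m_{\Psi_r}} = 1/\Psi_r(1) $ is comparable to $\|e_1\|_{\ell_r}=1$ up to a constant — either way one folds the discrepancy into $K_r$), so $\|\id\|^{M-k} \le K_r\|\id\|^{M}$; and $(1+\varepsilon)^{k/r'} \le (1+\varepsilon)^{M/r'}$ since $0 \le k \le M$. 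The factor $N^{1/((1+\varepsilon)r')}$ appears once for each $k$ (with the convention that for $k=0$ the tetrahedral sum is just $1$, which is $\le 2 N^{1/((1+\varepsilon)r')}$ anyway for $N\ge 1$). Summing the $M+1$ terms $k=0,\dots,M$ produces the factor $(M+1)$, and the constant $2$ from the tetrahedral bound gives the $2^{M/r+1}$... actually the $2$ is a fixed constant, so collecting everything yields
\[
\sum_{\alpha \in \Lambda(M,N)} |z^\alpha||[\alpha]|^{\frac{1}{r}}
\le 2\, K_r\, (M+1)\, (1+\varepsilon)^{\frac{M}{r'}}\, 2^{\frac{M}{r}}\, N^{\frac{1}{(1+\varepsilon)r'}}\, \|\id: m_{\Psi_r}\to\ell_r\|^{M}\, \|z\|_{m_{\Psi_r}}^{M},
\]
which is the claimed bound after renaming $K_r$ (the $2^{\frac{M}{r}+1}$ in the statement absorbs the leading $2$).

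There is essentially no hard step here — it is a bookkeeping argument. The one point that deserves a line of care is the injectivity of $\alpha \mapsto (\alpha_T,\alpha_E)$ and the resulting domination of the double-indexed sum; this is immediate from the explicit formulas for $\alpha_T$ and $\alpha_E$ in Remark~\ref{rem: fact indices}, since $\alpha$ is recovered as $\alpha_T+\alpha_E$. A second minor subtlety is the edge cases $k=0$ (purely even $\alpha$) and $k=M$ (purely tetrahedral $\alpha$), where one of the two sums is an empty product equal to $1$; both are consistent with the stated bounds for $N \ge 1$, and the trivial case $N=1$ can be checked separately or simply noted to be subsumed. So the main obstacle, such as it is, is just making sure all the exponent bookkeeping (the powers of $1+\varepsilon$, of $2$, of $\|\id\|$, and of $\|z\|_{m_{\Psi_r}}$) is carried out correctly so that everything recombines into exactly the $M$-th powers appearing in the statement, with only the harmless polynomial factor $M+1$ left over from the sum over $k$.
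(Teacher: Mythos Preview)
Your proposal is correct and follows essentially the same route as the paper: decompose each $\alpha$ into its tetrahedral and even parts via Remark~\ref{rem: fact indices}, bound $|[\alpha]|^{1/r}\le 2^{M/r}|[\alpha_T]|^{1/r}|[\alpha_E]|^{1/r}$, split the sum over $k=0,\dots,M$, and apply the two estimates of Lemma~\ref{lem: tetra 1 < r < 2}. The only cosmetic difference is that the paper manipulates the resulting sum over $k$ as a geometric series in $2^{k(1-2/r)}$ (which is bounded by a constant for $1<r<2$ and equals $M+1$ for $r=2$), whereas you simply bound each of the $M+1$ summands uniformly; both routes yield the stated inequality after absorbing constants into $K_r$.
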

\begin{proof}
Choose some decreasing $z$ and use  by Remark~\ref{rem: fact indices} and Lemma~\ref{lem: tetra 1 < r < 2} to get
\begin{align*}
\sum_{\alpha \in \Lambda(M,N)} |z^\alpha| |[\alpha]|^{\frac{1}{r}}
& =  \sum_{k = 0}^M \sum_{\alpha_{T} \in \Lambda_T(k,N)} \sum_{\alpha_{E} \in \Lambda_E(M-k,N)}|z^{(\alpha_{T} + \alpha_{E})}| |[\alpha_{T} + \alpha_{E}]|^\frac{1}{r}\\
& \leq 2^{\frac{M}{r}} \sum_{k = 0}^M 
\left( \sum_{\alpha_{T} \in \Lambda_T(k,N)} |z^\alpha_{T} | | [\alpha_{T}]|^{\frac{1}{r}} \right) 
\left( \sum_{\alpha_{E} \in \Lambda_E(M-k,N)}|z^\alpha_{E}| |[\alpha_{E}]|^\frac{1}{r} \right) \\
& \leq 2^{\frac{M}{r}} \sum_{k = 0}^M 
\left((1+\varepsilon)^{\frac{k}{r'}} \| z \|_{m_{\Psi_r}}^k N^{\frac{1}{(1+\varepsilon)r'}} \right) 
\left( \| \id: m_{\Psi_r} \to \ell_r  \|^{M-k}  \| z \|_{m_{\Psi_r}}^{M-k} \right) \\
& \leq 2^{\frac{M}{r}+1} (1 + \varepsilon)^M \| \id: m_{\Psi_r} \to \ell_r  \|^{M} \| z \|_{m_{\Psi_r}}^{M} N^{\frac{1}{(1 + \varepsilon)r'}}  \sum_{k = 0}^M  2^{k(1-\frac{2}{r})} \,.
\end{align*}
For $r=2$ the last sum is exactly $M+1$. If $1<r<2$ the series converges to $\frac{2^{2/r}}{2^{2/r} - 2}$. This completes the proof
\end{proof}

We are finally in the position to give the proof of Theorem~\ref{teo: hypercontrac 1 < r <2} from which (as we already saw) the lower inclusion in Theorem~\ref{thm: main theorem hb} follows.

\begin{proof}[Proof of Theorem~\ref{teo: hypercontrac 1 < r <2}]
Fix  $1 < r \le 2$ and $n,m$. Pick then $P \in P \in \Pp(^m \CC^n)$ and $z \in \mathbb{C}^{n}$. Since $\Vert z \Vert_{m_{\Psi_r}} = \Vert z^{*} \Vert_{m_{\Psi_r}}$, we may assume $z=z^{*}$. Applying  Lemma~\ref{lem: general} with $M = m-2$ and $N = k$ after Lemma~\ref{lem: 1er lem 1 < r < 2} yields
\begin{multline*}
\sum_{\jj \in \Jj(m,n)} | c_\jj(P) z_\jj | \\
\leq 2 A_{r} m^{1+\frac{1}{r}} e^{\frac{m}{r}} \Vert z \Vert_{m_{\Psi_r}}^{2}
\left( \sum_{k = 1}^n \frac{\log(k+1)^\frac{2}{r'}}{k^{1 + \frac{1}{r'}}} K_{r} (m-1) 2^{\frac{(m-2)}{r}}(1+\varepsilon)^{\frac{m-2}{r'}} \| \id \|^{m-2} k^{\frac{1}{(1+\varepsilon)r'}} \Vert z \Vert_{m_{\Psi_r}}^{m-2} \right)  \| P \|_{\Pp(^m \ell_r^{n})} \\
\leq 2 A_{r}K_{r} m^{2+\frac{1}{r}} ((1+\varepsilon)2e)^{\frac{m}{r}}\| \id \|^{m} \Vert z \Vert_{m_{\Psi_r}}^{m}
\left( \sum_{k = 1}^n \frac{\log(k+1)^\frac{2}{r'}}{k^{1 + \frac{\varepsilon}{(1+\varepsilon)r'}}}    \right)  \| P \|_{\Pp(^m \ell_r^{n})} \,.
\end{multline*}
Since $r>1$ the series $\sum_{k = 1}^\infty \frac{\log(k+1)^\frac{2}{r'}}{k^{1 + \frac{\varepsilon}{(1+\varepsilon)r'}}}$ is convergent. This completes the proof.
\end{proof}

\section{Monomial convergence for bounded holomorphic functions on \texorpdfstring{$B_{\ell_r}$}.}

We change now our focus to the space $H_{\infty} (B_{\ell_{r}})$ of bounded holomorphic functions on $B_{\ell_r}$. Our main contribution in this side is the following theorem, that provides with lower and upper inclusions for the set of 
monomial convergence of these spaces. It recovers (see Remark~\ref{georgewinston} and Corollary~\ref{stabatmater}) some previously known results.

\begin{theorem}\label{teo: mon H_infty}
Let $1 <r \leq 2$ then, 
\begin{multline*}
\Big\{ z \in \mathbb{C}^{\mathbb{N}} \colon  2e \| \id: m_{\Psi_r} \to \ell_r \|^r \left(  \limsup_{n \to \infty} \frac{\sum_{k=1}^n z_k^*}{\log(n+1)^{1-1/r}}\right)^r  + \| z \|_{\ell_r}^r < 1  \Big\} 
\subset \\
\mon H_{\infty} (B_{\ell_{r}}) \subset 
\Big\{ z \in B_{\ell_r} \colon \limsup_{n \to \infty} \frac{\sum_{k=1}^n z_k^*}{\log(n+1)^{1-1/r}} \leq 1 \Big\} \,.
\end{multline*}
\end{theorem}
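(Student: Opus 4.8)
The plan is to handle the two inclusions separately, leaning on the machinery already built for $H_b(\ell_r)$ together with standard scaling tricks for $H_\infty(B_{\ell_r})$. For the lower inclusion I would first observe that by Corollary~\ref{cor: familias de reordenamiento} the family $H_\infty(B_{\ell_r})$ is a rearrangement family, so it suffices to take a decreasing $z$ satisfying $2e\|\id\colon m_{\Psi_r}\to\ell_r\|^r\,L^r+\|z\|_{\ell_r}^r<1$, where $L:=\limsup_n \frac{\sum_{k=1}^n z_k^*}{\log(n+1)^{1-1/r}}$, and show $z\in\mon H_\infty(B_{\ell_r})$. Given $f\in H_\infty(B_{\ell_r})$, write $f=\sum_m P_m(f)$ with $\|P_m(f)\|_{\Pp(^m\ell_r)}\le\|f\|_\infty$ (Cauchy estimates on the balanced ball). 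The key point is to apply Theorem~\ref{teo: hypercontrac 1 < r <2} to each $P_m(f)$: for every $\varepsilon>0$,
\[
\sum_{\jj\in\Jj(m,n)}|c_\jj(P_m(f))\,z_\jj|\le C_r(\varepsilon)\,m^{2+\frac1r}\big((1+\varepsilon)2e\big)^{m/r}\|\id\|^m\|z\|_{m_{\Psi_r}}^m\|f\|_\infty,
\]
but since $\|z\|_{m_{\Psi_r}}$ counts the \emph{supremum} over all $n$ (not the limsup) this crude bound is not summable under the stated hypothesis. The fix is the standard truncation/tail argument: split $z=z'+z''$ where $z'$ is a finite initial segment and $z''$ has $\|z''\|_{m_{\Psi_r}}$ as close as we like to $L$ (this uses that the limsup governs the tail behaviour of the averages $\frac1{\Psi_r(n)}\sum_{k\le n}z_k^*$), handle the finitely many variables of $z'$ trivially (polynomial expansions in finitely many variables always converge absolutely), and for the tail apply Theorem~\ref{teo: hypercontrac 1 < r <2} — choosing $\varepsilon$ small — to get geometric decay in $m$ precisely when $(1+\varepsilon)2e\,\|\id\|^r\,L^r+\|z''\|_{\ell_r}^r<1$. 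Summing the geometric series over $m$ then gives $\sum_\alpha|c_\alpha(f)z^\alpha|<\infty$.

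For the upper inclusion I would argue by contraposition, essentially reusing the Bayart-type polynomials from \eqref{polinomios de Bayart} exactly as in the proof of the upper inclusion of Theorem~\ref{thm: main theorem hb}. Suppose $z\in\mon H_\infty(B_{\ell_r})$; then in particular $z\in B_{\ell_r}$ by \cite[Proposition~20.3]{defant2019libro}. Using the rearrangement property we may take $z=z^*$. Now for fixed $m$, the normalized polynomial $Q_{m,n}(w):=\frac{1}{C_r(\log(m)m!)^{1-1/r}n^{1-1/r}}\sum_{\alpha\in\Lambda(m,n)}\varepsilon_\alpha\frac{m!}{\alpha!}w^\alpha$ has sup-norm at most $1$ on $B_{\ell_r^n}$, hence extends to an element of $H_\infty(B_{\ell_r})$ of norm $\le 1$; since $z\in\mon H_\infty(B_{\ell_r})$, the series $\sum_\alpha|c_\alpha(Q_{m,n})z^\alpha|$ is bounded uniformly (one needs here that the bound is uniform over the family $\{Q_{m,n}\}$, which follows because $\mon H_\infty(B_{\ell_r})$ applied to a single function gives a finite value, but to get uniformity one instead fixes $m$ and lets $n\to\infty$, using that the partial sums $\sum_{|\alpha|=m}|c_\alpha(Q_{m,n})z^\alpha|$ are all bounded by $\sum_\alpha|c_\alpha(f)z^\alpha|$ for a single cleverly chosen $f$, e.g. built from these polynomials via a lacunary series). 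This yields, after the multinomial identity as in \eqref{elenitagentil},
\[
\Big(\sum_{j=1}^n z_j^*\Big)^m\le C\,(\log(m)\,m!\,n)^{1-1/r},
\]
and then taking $m$-th roots, applying Stirling, and optimizing $m=\lfloor\log(n+1)\rfloor$ exactly as before forces $\limsup_n\frac{\sum_{k\le n}z_k^*}{\log(n+1)^{1-1/r}}\le 1$.

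The main obstacle, and the place requiring genuine care, is the \textbf{uniformity issue in the upper inclusion}: the definition of $\mon\Ff$ only asserts finiteness of $\sum_\alpha|c_\alpha(f)z^\alpha|$ for each individual $f$, so to extract an inequality valid for all $(m,n)$ simultaneously one must package the whole family of test polynomials into a single bounded holomorphic function. The clean way to do this is to take a lacunary-in-degree sum $f=\sum_k \lambda_k\, Q_{m_k,n_k}$ with rapidly decaying $\lambda_k$ so that $f\in H_\infty(B_{\ell_r})$, and note that $\sum_\alpha|c_\alpha(f)z^\alpha|<\infty$ already controls each block; alternatively one invokes a closed-graph / Baire-category argument on the Fréchet-type structure, in the spirit of Lemma~\ref{relacion mon}, to get a single constant $C$ with $\sum_\alpha|c_\alpha(g)z^\alpha|\le C\|g\|_\infty$ for all $g\in H_\infty(B_{\ell_r})$ once $z\in\mon H_\infty(B_{\ell_r})$. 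A secondary technical point is the passage from the $m_{\Psi_r}$-supremum norm appearing in Theorem~\ref{teo: hypercontrac 1 < r <2} to the limsup appearing in the hypothesis of the lower inclusion; this is exactly the tail-splitting described above and is where the precise shape of the constant $2e\|\id\|^r$ gets pinned down (cf. Remark~\ref{rem: desig coord} for the relation between $\|\cdot\|_{m_{\Psi_r}}$ and $\|\cdot\|_{\ell_r}$).
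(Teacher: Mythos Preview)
Your treatment of the \textbf{upper inclusion} is correct and matches the paper: the paper also uses the closed-graph argument (stated as Remark~\ref{relacion mon H_infty}) to obtain a uniform constant $C_{z^*}$ with $\sum_\alpha |c_\alpha(g) z^\alpha| \le C_{z^*}\|g\|_\infty$, then plugs in the Bayart polynomials \eqref{polinomios de Bayart} exactly as in \eqref{elenitagentil}--\eqref{desigBayart}. The key difference with the $H_b$ case is that here the constant is $C_{z^*}$ rather than $C_{z^*}^m$, so after the $m$-th root you get $C_{z^*}^{1/m}\to 1$, which is what forces the limsup to be $\le 1$ rather than merely bounded; your displayed inequality reflects this correctly.

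The \textbf{lower inclusion} has a genuine gap at the ``handle the finitely many variables of $z'$ trivially'' step, and this is exactly where your condition goes wrong. Writing $z=z'+z''$ with $z'$ supported on $\{1,\dots,N\}$, the sum $\sum_\alpha |c_\alpha(f) z^\alpha|$ does \emph{not} split into a $z'$-part and a $z''$-part; rather
\[
\sum_\alpha |c_\alpha(f) z^\alpha| = \sum_{k_1,\dots,k_N\ge 0} |z_1|^{k_1}\cdots|z_N|^{k_N}\sum_{\beta}|c_{(k_1,\dots,k_N,\beta)}(f)|\,|z_{N+1}|^{\beta_1}|z_{N+2}|^{\beta_2}\cdots,
\]
and to control the inner sum you must recognize it as $\sum_\beta |c_\beta(f_{k_1,\dots,k_N})|\,|z''|^\beta$ for the auxiliary functions
\[
f_{k_1,\dots,k_N}(\nu)=\frac{1}{(2\pi i)^N}\int_{|w_1|=a_1}\!\cdots\!\int_{|w_N|=a_N}\frac{f(w_1,\dots,w_N,\nu_{N+1},\dots)}{w_1^{k_1+1}\cdots w_N^{k_N+1}}\,dw,
\]
which are only bounded on the \emph{contracted} ball $(1-a)^{1/r}B_{\ell_r}$ with $a>\sum_{k\le N}|z_k|^r$ (this is the content of the paper's Proposition~\ref{prop: achicar coord}). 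Consequently the tail $z''$ must lie in $\mon H_\infty\big((1-\rho)^{1/r}B_{\ell_r}\big)$ for some $\rho>\sum_{k\le N}|z_k|^r$, and combining this with Lemma~\ref{lem: 1er result H_infty} and the scaling \eqref{eq: dilatado mon} yields the requirement $2e\|\id\|^r L^r + \rho < 1$. This is where the $\|z\|_{\ell_r}^r$ term enters: it is the mass of the \emph{head} $z'$ (in the limit $N\to\infty$), not of the tail. Your stated condition $(1+\varepsilon)2e\|\id\|^r L^r + \|z''\|_{\ell_r}^r < 1$ cannot be right, since $\|z''\|_{\ell_r}\to 0$ as $N\to\infty$ and would make the $\ell_r$ term disappear entirely. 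Theorem~\ref{teo: hypercontrac 1 < r <2} by itself only produces the Marcinkiewicz norm; the $\ell_r$ norm is the price paid for the Cauchy-integral reduction in the first $N$ variables.
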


The upper inclusion follows using probabilistic techniques, as in the case of $\mon H_{b}(\ell_{r})$. The lower inclusion, on the other hand, relies on Theorem~\ref{teo: hypercontrac 1 < r <2} and requires some preliminary work that we 
start with the following remark.

\begin{remark}\label{relacion mon H_infty}
Given a Reinhardt domain $\mathcal{R}$ in a Banach sequence space $X$, a simple closed-graph argument (see \cite[Lemma~4.1]{defant2009domains} or \cite[Remark~20.1]{defant2019libro}) shows that $z \in \mon H_{\infty} (\mathcal{R})$ if and only if there is a constant $C_{z}>0$ such that 
\[
\sum_{\alpha\in \mathbb N_0^{(\mathbb N)}}  |c_{\alpha}(f) z^\alpha| \le C_z \| f \|_{\mathcal{R}}
\]
for every $f \in H_{\infty} (\mathcal{R})$.
\end{remark}

\begin{lemma}\label{lem: 1er result H_infty}
Let $1< r \le 2$ then, $\frac{1}{\| \id: m_{\Psi_r} \to \ell_r \|(2e)^{1/r}}B_{m_{\Psi_r}} \subset \mon H_\infty (B_{\ell_r})$.
\end{lemma}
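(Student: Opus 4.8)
The goal is to show that if $z \in \mathbb{C}^\NN$ satisfies $\|z\|_{m_{\Psi_r}} < \frac{1}{\|\id : m_{\Psi_r} \to \ell_r\|(2e)^{1/r}}$, then $z \in \mon H_\infty(B_{\ell_r})$. By Remark~\ref{relacion mon H_infty}, it suffices to produce a finite constant $C_z$ so that $\sum_{\alpha} |c_\alpha(f) z^\alpha| \le C_z \|f\|_{B_{\ell_r}}$ for every $f \in H_\infty(B_{\ell_r})$. The natural first step is to decompose $f$ into its homogeneous Taylor expansion $f = \sum_{m \ge 0} P_m(f)$, where $P_m(f) \in \Pp(^m \ell_r)$, and to use the Cauchy estimates $\|P_m(f)\|_{\Pp(^m \ell_r)} \le \|f\|_{B_{\ell_r}}$, valid because $B_{\ell_r}$ is balanced. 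Then one sums the contributions of each homogeneous layer separately.

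\textbf{Key steps.} First I would fix $n$ and work with the truncation to $\CC^n$, reducing to controlling $\sum_{\jj \in \Jj(m,n)} |c_\jj(P_m(f)) z_\jj|$ uniformly in $n$. Since $\|\cdot\|_{m_{\Psi_r}}$ is rearrangement-invariant and $H_\infty(B_{\ell_r})$ is a rearrangement family (Corollary~\ref{cor: familias de reordenamiento}), I may assume $z = z^*$ is decreasing. Now apply Theorem~\ref{teo: hypercontrac 1 < r <2} with a fixed $\varepsilon > 0$ to bound each homogeneous piece:
\[
\sum_{\jj \in \Jj(m,n)} |c_\jj(P_m(f)) z_\jj^*| \le C_r(\varepsilon)\, m^{2 + \frac{1}{r}} \big((1+\varepsilon)2e\big)^{\frac{m}{r}} \|\id\|^m \|z\|_{m_{\Psi_r}}^m \|P_m(f)\|_{\Pp(^m \ell_r^n)}.
\]
Summing over $m$ and using the Cauchy estimate $\|P_m(f)\|_{\Pp(^m \ell_r^n)} \le \|f\|_{B_{\ell_r}}$, the total is bounded by $\|f\|_{B_{\ell_r}}$ times $\sum_{m \ge 0} C_r(\varepsilon)\, m^{2+\frac{1}{r}} \big((1+\varepsilon)2e\big)^{\frac{m}{r}} \|\id\|^m \|z\|_{m_{\Psi_r}}^m$. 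This geometric-type series converges precisely when $((1+\varepsilon)2e)^{1/r} \|\id\| \|z\|_{m_{\Psi_r}} < 1$, i.e. when $\|z\|_{m_{\Psi_r}} < \frac{1}{\|\id\|((1+\varepsilon)2e)^{1/r}}$; the polynomial factor $m^{2+\frac{1}{r}}$ does not affect convergence of the series. Letting $\varepsilon \to 0^+$ gives the stated radius $\frac{1}{\|\id\|(2e)^{1/r}}$: for any $z$ with $\|z\|_{m_{\Psi_r}}$ strictly less than this value, one can choose $\varepsilon$ small enough that the series still converges, yielding a finite $C_z$ independent of $n$, and hence $z \in \mon H_\infty(B_{\ell_r})$ by Remark~\ref{relacion mon H_infty}.

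\textbf{Main obstacle.} The genuine work has already been front-loaded into Theorem~\ref{teo: hypercontrac 1 < r <2}, so this lemma is essentially a packaging argument; the only point requiring a little care is the interchange of the supremum over $n$ with the sum over $m$ (to get a bound uniform in $n$), which is fine because every estimate above is independent of $n$ once $z = z^*$ is fixed, together with checking that the strict inequality in the hypothesis survives the limit $\varepsilon \to 0$. I would also note explicitly that the polynomial prefactor $m^{2+1/r}$ is harmless: $\limsup_m (m^{2+1/r})^{1/m} = 1$, so the radius of convergence of the power series in $\|z\|_{m_{\Psi_r}}$ is governed entirely by $((1+\varepsilon)2e)^{1/r}\|\id\|$.
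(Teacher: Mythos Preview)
Your proposal is correct and follows essentially the same route as the paper: reduce to decreasing $z$ via the rearrangement-family property, apply Theorem~\ref{teo: hypercontrac 1 < r <2} to each homogeneous piece, bound $\|P_m(f)\|$ by $\|f\|_{B_{\ell_r}}$ via Cauchy's inequalities, and then pick $\varepsilon>0$ (depending on $z$) small enough that the resulting series in $m$ converges. The paper's proof is organized identically, and your remark that the polynomial prefactor $m^{2+1/r}$ is harmless for the radius of convergence is exactly the observation the paper uses to conclude.
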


\begin{proof}
In order to keep thinsg readable we write $K= \| \id: m_{\Psi_r} \to \ell_r \|(2e)^{1/r}$. We first show that if $z \in \frac{1}{K} B_{m_{\Psi_r}}$ is non-decreasing, then $z \in \mon H_\infty (B_{\ell_r})$. The general result follows 
form the fact that $B_{m_{\Psi_r}}$ and $\mon H_\infty (B_{\ell_r})$ are both symmetric (Corollary~\ref{cor: familias de reordenamiento}). We choose now $f \in H_{\infty} (B_{\ell_r})$ and fix $\varepsilon >0$ so that 
$(1+\varepsilon)^{1/r} \Vert z \Vert_{m_{\Psi_{r}}} K <1$. By Theorem~\ref{teo: hypercontrac 1 < r <2} we can find $C_{r}(\varepsilon)>0$ so that 
\begin{align*}
\sum_{\alpha \in \mathbb{N}_0^{(\mathbb{N})} } \vert  c_\alpha(f) z^\alpha \vert & = \sup_{n \in \mathbb{N}} \sum_{m = 0}^\infty \sum_{\jj \in \mathcal J(m,n) } \vert c_{\jj}(f) z_{\jj} \vert \\
&  \leq \sup_{n \in \mathbb{N}} \sum_{m = 0}^\infty  C_r(\varepsilon) m^{2+\frac{1}{r}} (1+\varepsilon)^{\frac{m}{r}} K^{m} \| z \|_{m_{\Psi_r}}^m \sup_{u \in B_{\ell_{r}^{n}}} \left\vert \sum_{\jj \in \mathcal J(m,n) } c_{\jj}(f) u_{\jj} \right\vert \\
& \leq \sum_{m = 0}^\infty  C_r(\varepsilon) \left(m^{\frac{1}{m}(2+\frac{1}{r})} (1+\varepsilon)^{\frac{1}{r}} K \| z \|_{m_{\Psi_r}} \right)^m \| P_m(f) \|_{\Pp(^m \ell_r)} \\
& \le \| f \|_{B_{\ell_r}}  C_r(\varepsilon) \sum_{m = 0}^\infty \left(m^{\frac{1}{m}(2+\frac{1}{r})} (1+\varepsilon)^{\frac{1}{r}} K \| z \|_{m_{\Psi_r}} \right)^m.
\end{align*}
The choice of $\varepsilon$ and fact that $m^{\frac{1}{m}(2+\frac{1}{r})} \to 1$ as $m \to \infty$ immediately give that the series converges and complete the proof.
\end{proof}

A useful tool when dealing with $\mon H_{\infty} (B_{c_{0}})$ is that, if a sequence belong to such a  set of monomial convergence and we modify finitely many coordinates, then the resulting sequence remains in the set of monomial 
convergence (see  \cite[Lemma~2]{defant2008bohr} or \cite[Proposition~10.14]{defant2019libro}).  It is unknown whether or not an analogous result result holds for $\ell_r$ (see the comments regarding this problem in 
\cite[Chapter~10]{schluters2015unconditionality}). We overcome this with the following proposition, a weaker version of this, but enough for our purposes.

\begin{proposition}\label{prop: achicar coord}
Let $1< r < \infty$ and $u,z \in B_{\ell_r}$ be such that $ |u_n| \le |z_n|$ for $1 \le n \le N$ and $|u_n| = |z_n|$ for $n > N$. Suppose that there exists $\rho > \sum_{n=1}^{N}|z_n|^r$ so that $u \in \mon H_{\infty}((1-\rho)^{1/r} B_{\ell_r})$. Then $z \in \mon H_{\infty} (B_{\ell_r})$.
\end{proposition}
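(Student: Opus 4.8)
The plan is to fix $f\in H_\infty(B_{\ell_r})$ with $\|f\|_{B_{\ell_r}}\le 1$ and show that $\sum_{\alpha\in\NN_0^{(\NN)}}|c_\alpha(f)\,z^\alpha|<\infty$; by Remark~\ref{relacion mon H_infty} this gives $z\in\mon H_\infty(B_{\ell_r})$. Deleting the indices $j\le N$ with $z_j=0$ (for which also $u_j=0$, and which contribute nothing to any series below), we may assume $z_1,\dots,z_N\neq 0$. Write each multi-index as $\alpha=\beta+\gamma$ with $\beta\in\NN_0^N$ supported on $\{1,\dots,N\}$ and $\gamma$ supported on $\{N+1,N+2,\dots\}$, and put $z'=(z_1,\dots,z_N)$, $z''=(z_{N+1},z_{N+2},\dots)$, so $z^\alpha=(z')^\beta(z'')^\gamma$. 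Since $\sum_{j=1}^N|z_j|^r<\rho$, fix $t>1$ with $t^r\sum_{j=1}^N|z_j|^r<\rho$; this slack is exactly what the hypothesis $\rho>\sum_{j\le N}|z_j|^r$ buys us.

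The core step is to bound, for each fixed $\beta$, the tail sum $\sum_\gamma|c_{\beta+\gamma}(f)|\,|(z'')^\gamma|$ by $C_u\,t^{-|\beta|}|(z')^\beta|^{-1}$ with $C_u$ independent of $\beta$ and $f$. Expand $f$ in its first $N$ variables: writing $y$ for the tail variable, $f(w',y)=\sum_\beta a_\beta(y)(w')^\beta$, where each $a_\beta$ is holomorphic on the (Reinhardt, tail) domain $D=\{y:\|y\|_{\ell_r}^r<1-t^r\sum_{j=1}^N|z_j|^r\}$, with monomial expansion $a_\beta(y)=\sum_\gamma c_{\beta+\gamma}(f)\,y^\gamma$ (this follows from \eqref{dowland} applied first in the head and then in the tail variables, plus uniqueness of coefficients). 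Taking the supremum over $|w_j'|=t|z_j|$ and using the Cauchy estimates gives $\|a_\beta\|_{H_\infty(D)}\le t^{-|\beta|}|(z')^\beta|^{-1}$, since for such $w'$ and $y\in D$ one has $\|(w',y)\|_{\ell_r}^r=t^r\sum_{j=1}^N|z_j|^r+\|y\|_{\ell_r}^r<1$. Because $t^r\sum_{j=1}^N|z_j|^r<\rho$, forgetting the first $N$ coordinates embeds $(1-\rho)^{1/r}B_{\ell_r}$ into $D$, so $\tilde a_\beta(x):=a_\beta(x_{N+1},x_{N+2},\dots)$ defines an element of $H_\infty\big((1-\rho)^{1/r}B_{\ell_r}\big)$ with $\|\tilde a_\beta\|_{(1-\rho)^{1/r}B_{\ell_r}}\le t^{-|\beta|}|(z')^\beta|^{-1}$, whose monomial coefficients are $c_\gamma(\tilde a_\beta)=c_{\beta+\gamma}(f)$ for $\gamma$ supported on $\{N+1,\dots\}$ and $0$ otherwise.

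Now apply the closed-graph fact of Remark~\ref{relacion mon H_infty} to $u\in\mon H_\infty\big((1-\rho)^{1/r}B_{\ell_r}\big)$: there is $C_u>0$ with $\sum_\alpha|c_\alpha(g)\,u^\alpha|\le C_u\|g\|_{(1-\rho)^{1/r}B_{\ell_r}}$ for all such $g$. Applied to $g=\tilde a_\beta$ and using $u_n=z_n$ for $n>N$, this yields $\sum_\gamma|c_{\beta+\gamma}(f)|\,|(z'')^\gamma|\le C_u\,t^{-|\beta|}|(z')^\beta|^{-1}$. Summing over $\beta$ by Tonelli,
\[
\sum_\alpha|c_\alpha(f)\,z^\alpha|=\sum_{\beta\in\NN_0^N}|(z')^\beta|\sum_\gamma|c_{\beta+\gamma}(f)|\,|(z'')^\gamma|\le C_u\sum_{\beta\in\NN_0^N}t^{-|\beta|}=C_u\Big(\tfrac{t}{t-1}\Big)^N<\infty,
\]
which finishes the proof. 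I expect the main obstacle to be the middle paragraph: identifying the partial Taylor coefficients $a_\beta$ as genuine bounded holomorphic functions of the tail variables with the correct monomial coefficients, and — crucially — introducing the scaling parameter $t>1$ so that the per-$\beta$ bound carries the geometrically decaying factor $t^{-|\beta|}$; without that factor the final sum over $\beta\in\NN_0^N$ diverges. The rest is Tonelli's theorem together with the already established Remark~\ref{relacion mon H_infty}.
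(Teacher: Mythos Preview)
Your proof is correct and follows essentially the same approach as the paper: extract, for each head multi-index $\beta\in\NN_0^N$, a tail function via a Cauchy integral in the first $N$ variables, bound it by Cauchy estimates, apply Remark~\ref{relacion mon H_infty} at $u$, and sum the resulting geometric series over $\beta$. The only cosmetic difference is that the paper chooses arbitrary radii $a_j>|z_j|$ with $\sum a_j^r<\rho$ (which avoids the preliminary deletion of zero coordinates), whereas you take the uniform scaling $a_j=t|z_j|$.
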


\begin{proof}
Let $a_1, \ldots, a_{N} $  be positive real numbers such that $|z_i| < a_i$ for every $1 \le i \le N$ and 
\[
a := \sum_{n = 1}^{N} a_n^r < \rho.
\]
Given for $f \in H_{\infty}(B_{\ell_r})$ and $k_1, \ldots, k_{N} \in \NN$, we define (following the proof of  \cite[Lemma~2]{defant2008bohr})
\[
    f_{k_1, \ldots, k_{N}}(\nu) := \frac{1}{(2\pi i)^{N}} \int_{|w_1| = a_1} \cdots \int_{|w_{N}| = a_{N}} \frac{f(w_1, \ldots, w_{N}, \nu_{N+1}, \nu_{N+2}, \ldots)}{w_1^{k_1+1} \cdots w_{N}^{k_{N}+1}} dw_1 \cdots dw_{N}. 
\]
Note that $f_{k_1, \ldots,k_N}$ is well defined on the contracted ball $(1-a)^{1/r}B_{\ell_r}$ and, in fact, belongs to $H_{\infty}((1-a)^{1/r} B_{\ell_r})$ (because $f \in H_{\infty}(B_{\ell_r})$) and
\begin{equation}\label{cota f_(k_1,...k_n)}
\| f_{k_1, \ldots,k_{N}}\|_{(1-a)^{1/r} \cdot B_{\ell_r} } \le 
\frac{\| f \|_{B_{\ell_r}}}{a_1^{k_1} \cdots a_N^{k_N}}.
\end{equation}
Our next step is to understand the coefficients $c_\alpha(f_{k_1, \ldots, k_N})$ in relation to those of $f$. For each multi-index $\alpha = (\alpha_1, \ldots, \alpha_n, 0 , \ldots)$ with $\alpha_n \neq 0$, an application of the Cauchy integral formula yields
\begin{equation}\label{coeff de f_(k1,...,kN)}
c_{\alpha}(f_{k_1,\ldots,k_n}) = 
\begin{cases}
c_{(k_1, \ldots,k_N, \alpha_{N+1}, \ldots , \alpha_n)}(f) & \; \text{if} \; \alpha_1 = \cdots = \alpha_N = 0,\\
0 & \text{otherwise}.
\end{cases}
\end{equation}
We have now everything we need to proceed. Note that, since $a < \rho$, we have $u \in \mon H_\infty((1-\rho)^{1/r}B_{\ell_r}) \subset \mon H_\infty ((1-a)^{1/r}B_{\ell_r}) $. With Remark~\ref{relacion mon H_infty} and  \eqref{cota f_(k_1,...k_n)} we get
\begin{equation}\label{eq: u en mon de f_(k1,...,kN)}
    \sum_{\beta \in \NN_0^{(\NN)}} |c_\beta(f_{k_1, \ldots,k_N})| |u_{N+1}^{\beta_1} \cdots u_{N+2}^{\beta_2} \cdots| \le C_u \| f_{k_1,\ldots,k_N} \|_{(1-a)^{1/r}B_{\ell_r}} \le C_u \frac{\| f \|_{B_{\ell_r}}}{a_1^{k_1} \cdots a_N^{k_N}}.
\end{equation}
Now using \eqref{coeff de f_(k1,...,kN)} and \eqref{eq: u en mon de f_(k1,...,kN)} (recall that $\vert u_{n} \vert = \vert z_{n} \vert$ for $n \geq N+1$) we have
\begin{align*}
\sum_{\alpha \in \NN_0^{(\NN)}} |c_\alpha(f)||z^\alpha| 
    & = \sum_{(k_1, \ldots k_N) \in \NN_0^N} |z_1^{k_1} \cdots z_N^{k_N}| \sum_{\beta \in \NN_0^{(\NN)}} |c_{(k_1, \ldots,k_N,\beta)}(f)| |u_{N+1}^{\beta_1} \cdots u_{N+2}^{\beta_2} \cdots| \\
    & = \sum_{(k_1, \ldots k_N) \in \NN_0^N} |z_1^{k_1} \cdots z_N^{k_N}| \sum_{\beta \in \NN_0^{(\NN)}} |c_\beta(f_{k_1, \ldots,k_N})| |u_{N+1}^{\beta_1} \cdots u_{N+2}^{\beta_2} \cdots| \\
    & \le \sum_{(k_1, \ldots k_N) \in \NN_0^N} |z_1^{k_1} \cdots z_N^{k_N}| C_u \frac{\| f \|_{B_{\ell_r}}}{a_1^{k_1} \cdots a_N^{k_N}} \\
    & = C_u \| f \|_{B_{\ell_r}} \prod_{n = 1}^N \sum_{k_n \ge 0} \left( \frac{|z_n|}{a_n} \right)^{k_n} < \infty,
\end{align*}
as we wanted.
\end{proof}

Let us make a last observation before we proceed with the proof of Theorem~\ref{teo: mon H_infty}. Given a Banach sequence space $X$, for every $f \in H_{\infty}(tB_{X})$ and $t > 0$ the function $f_{t}$ given by $f_{t}(x)= f(tx)$ for $x \in B_{X}$ belongs to $H_{\infty}(B_{X})$ and 
$c_\alpha (f_t) = t^{|\alpha|} c_\alpha(f)$ for every $\alpha$. Then, if $z \in \mon H_{\infty}(B_{X})$ we have
\[
   \sum_{\alpha \in \NN_0^{(\NN)}} |c_\alpha(f) (t z)^{\alpha}| 
    = \sum_{\alpha \in \NN_0^{(\NN)}} |c_\alpha(f)t^{|\alpha|} z^{\alpha}| 
    = \sum_{\alpha \in \NN_0^{(\NN)}} |c_\alpha(f_{t}) z^{\alpha}| < \infty.
\]
This implies $t \mon H_\infty(B_X) \subset \mon H_\infty (t B_x)$ for every Banach sequence space $X$ and every $t>0$.\\
Noting that $tB_{X}$ is the open unit ball of the Banach sequence space $(X, t \Vert \cdot \Vert_{X})$, the previous inclusion yields
\[
t^{-1} \mon H_\infty (t B_X) \subset \mon H_\infty (t^{-1} t B_X) = \mon H_\infty (B_X).
\]
This altogether shows
\begin{equation}\label{eq: dilatado mon}
    \mon H_\infty (t B_X) = t \mon H_\infty (B_X)
\end{equation}
for every Banach sequence space $X$ and every $t>0$.
We are now in conditions of proving Theorem~\ref{teo: mon H_infty}.

\begin{proof}[Proof of Theorem~\ref{teo: mon H_infty}]
Let us start with the upper inclusion 
\[ \mon H_{\infty} (B_{\ell_{r}}) 
\subset \Big\{ z \in B_{\ell_r} \colon \limsup_{n \to \infty} \frac{\sum_{k=1}^n z_k^*}{\log(n+1)^{1-1/r}} \leq 1 \Big\} .
\]
Fix $z \in \mon H_\infty (B_{\ell_{r}})$. Arguing as in the proof of the upper inclusion of Theorem~\ref{thm: main theorem hb}, proceeding as in \eqref{elenitagentil}, replacing the role of Lemma~\ref{relacion mon} by Remark~\ref{relacion mon H_infty}, and as in \eqref{desigBayart} we get
\[
 \sum_{j = 1}^n |z_j^*| \le 
  C_{z^*,r}^{\frac{1}{m}} \left[\log(m)^{\frac{1}{m}} (2 \pi m)^{\frac{1}{2m}} e^{\frac{1}{12m^2}}  \frac{m}{e} n^{\frac{1}{m}}\right]^{1 - \frac{1}{r}}. 
 \]
where $C_{z^*,r}$ is a positive constant that depends only on $z^*$ and $r$.
Choosing $m = \lfloor \log(n+1) \rfloor $ we get
\[
\limsup_{n \to \infty} \frac{1}{\log(n+1)^{1-\frac{1}{r}}} \sum_{k = 1}^n |z_n^*| \le 1,
\]
which gives our claim.\\
We now face the proof of the lower inclusion 
\[
\Big\{ z \in \mathbb{C}^{\mathbb{N}} \colon  2e \| \id: m_{\Psi_r} \to \ell_r \|^r \left(  \limsup_{n \to \infty} \frac{\sum_{k=1}^n z_k^*}{\log(n+1)^{1-1/r}}\right)^r  + \| z \|_{\ell_r}^r < 1  \Big\} \subset \mon H_{\infty} (B_{\ell_{r}}) .
\]
In order to keep the notation as simple as possible, let $K= 2e \| \id: m_{\Psi_r} \to \ell_r \|^r$. Take $z \in \mathbb{C}^{\mathbb{N}}$ such that 
\[
  K \left(  \limsup_{n \to \infty} \frac{\sum_{k=1}^n z_k^*}{\log(n+1)^{1-1/r}}\right)^r  + \| z \|_{\ell_r}^r < 1,
\]
and note that this implies $z \in B_{\ell_{r}}$. Denote $L:= \dis\limsup_{n \to \infty} \frac{\sum_{k=1}^n z_k^*}{\log(n+1)^{1-1/r}} $, choose $\varepsilon > 0$ so that 
\begin{equation}\label{boulanger}
K \big(  (1+\varepsilon) L \big)^r  + \| z \|_{\ell_r}^r < 1,
\end{equation}
and $N \in \NN$ for which  
\[
\sup_{ n \ge N} \frac{\sum_{k=1}^n z_k^*}{\log(n+1)^{1-1/r}} < (1+\varepsilon) L.
\]
Let us observe that 
\begin{equation} \label{debussy}
z_{N}^{*} < \frac{\log(N+1)^{1-1/r}}{N} (1+\varepsilon) L,
\end{equation}
(this follows essentially as in Remark \ref{rem: desig coord}) and define $u = (\underbrace{z_{N}^{*}, \ldots, z_{N}^{*}}_{N}, z_{N +1}^{*}, z_{N +2}^{*},  \ldots)$. On the one hand, for every $n < N$ we have, using \eqref{debussy},
\[
\frac{\sum_{k=1}^n u_k^*}{\log(n+1)^{1-1/r}} 
< (1+\varepsilon) L .
\]
On the other hand, for $n \ge N$,
\[
\frac{\sum_{k=1}^n u_k^*}{\log(n+1)^{1-1/r}} =
\frac{\sum_{k=1}^n z_{n}}{\log(n+1)^{1-1/r}} < (1+\varepsilon) L.
\]
This altogether gives $\| u \|_{m_{\Psi_r}} < (1+\varepsilon) L$. We choose $\rho > \sum_{k=1}^{N} |z_k|^r$ such 
\[
\| \id: m_{\Psi_r} \to \ell_r \|^r (2e) (L(1+\varepsilon))^r  + \rho < 1,
\]
and, using \eqref{boulanger} we get
\[
\| u \|_{m_{\Psi_r}} < (1+\varepsilon) L < \frac{(1 - \rho)^{1/r}}{ \| \id: m_{\Psi_r} \to \ell_r \|(2e)^{1/r} }.
\]
Lemma~\ref{lem: 1er result H_infty} and equation \eqref{eq: dilatado mon} imply $u \in \mon H_\infty((1 - \rho)^{1/r} B_{\ell_r})$ and, then Proposition~\ref{prop: achicar coord} gives $z^{*} \in \mon H_{\infty} (B_{\ell_{r}})$. Finally, Corollary~\ref{reordenamiento} yields $z  \in \mon H_{\infty} (B_{\ell_{r}})$ and completes the proof.
\end{proof}

\begin{remark} \label{georgewinston}
Theorem~\ref{teo: mon H_infty} implies other known results which try to characterize the set of monomial convergence of $H_\infty(B_{\ell_r})$. Note first that, if $z \in \ell_{1}$, then 
\[
\limsup_{n \to \infty} \frac{\sum_{k=1}^n z_k^*}{\log(n+1)^{1-1/r}} = 0 .
\]
Thus
\[
B_{\ell_r} \cap \ell_1 \subset \Big\{ z \in \mathbb{C}^{\mathbb{N}} \colon  2e \| \id: m_{\Psi_r} \to \ell_r \|^r \left(  \limsup_{n \to \infty} \frac{\sum_{k=1}^n z_k^*}{\log(n+1)^{1-1/r}}\right)^r  + \| z \|_{\ell_r}^r < 1  \Big\}.
\]
On the other hand, if $z \in B_{\ell_{r}}$ is such that 
\[
\limsup_{n \to \infty} \frac{\sum_{k=1}^n z_k^*}{\log(n+1)^{1-1/r}} \leq 1
\]
then there is a constant $c>0$ so that
\[
z_{n}^{*} \leq c \frac{\log(n+1)^{1-1/r}}{n}.
\]
From this we easily get that $z \in \ell_{1 + \varepsilon}$ for every $\varepsilon >0$, and we recover \eqref{saintsaens} from Theorem~\ref{teo: mon H_infty}.
\end{remark}

The following corollary extends \cite[Theorem~5.5(1a) and Corollary~5.7]{bayart2016monomial} for $1 < r \le 2$.

\begin{corollary} \label{stabatmater}
Let $1 < r \le 2$. Then
\begin{equation} \label{pessoa}
\left( \frac{1}{n^{1/r'} \log(n+2)^\theta}\right)_{n \ge 1} \cdot B_{\ell_r} \subset \mon H_{\infty} (B_{\ell_r})
\end{equation}
for every  $\theta >0$. Also, denoting $K = \frac{1}{(2e \| \id: m_{\Psi_r} \to \ell_r \| +1)^{1/r}}$, we have
\begin{equation} \label{reis}
\left( \frac{1}{K n^{1/r'} } \right)_{n \ge 1} \cdot B_{\ell_r} \subset \mon H_{\infty} (B_{\ell_r}) .
\end{equation}
\end{corollary}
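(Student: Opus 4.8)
\emph{Plan of proof.} Both \eqref{pessoa} and \eqref{reis} are inclusions of the shape $(a_n)_{n\ge 1}\cdot B_{\ell_r}\subset\mon H_\infty(B_{\ell_r})$ for a fixed decreasing positive sequence $a$ (with $a_n=n^{-1/r'}\log(n+2)^{-\theta}$ in the first case, and $a_n=c\,n^{-1/r'}$ a fixed constant multiple of $n^{-1/r'}$ in the second). The plan is, given $z\in B_{\ell_r}$ and $w:=(a_nz_n)_n$, to show that $w$ already lies in the set on the left of the lower inclusion of Theorem~\ref{teo: mon H_infty}; that inclusion then hands us $w\in\mon H_\infty(B_{\ell_r})$ at once. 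So the goal is the estimate
\[
2e\,\|\id:m_{\Psi_r}\to\ell_r\|^r\Big(\limsup_{n\to\infty}\frac{\sum_{k=1}^n w_k^*}{\log(n+1)^{1-1/r}}\Big)^r+\|w\|_{\ell_r}^r<1 .
\]

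The first step is the elementary rearrangement bound $\sum_{k=1}^n w_k^*\le\sum_{k=1}^n a_k\,z_k^*$ for every $n$. Indeed $\sum_{k=1}^n w_k^*=\sup_{|A|=n}\sum_{k\in A}a_k|z_k|$, and for each $A$ of cardinality $n$ the finite Hardy--Littlewood rearrangement inequality, together with $(a|_A)^*_j\le a^*_j=a_j$ and $(z|_A)^*_j\le z^*_j$, gives $\sum_{k\in A}a_k|z_k|\le\sum_{j=1}^n a_j z^*_j$. Combining this with H\"older's inequality (exponents $r,r'$) and $\sum_k(z^*_k)^r=\|z\|_{\ell_r}^r$ yields $\sum_{k=1}^n w_k^*\le\|z\|_{\ell_r}\big(\sum_{k=1}^n a_k^{r'}\big)^{1/r'}$, while $\|w\|_{\ell_r}^r=\sum_n a_n^r|z_n|^r$ I would bound directly.

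For \eqref{pessoa} one has $a_k^{r'}=k^{-1}\log(k+2)^{-r'\theta}$, and an integral comparison shows that $\sum_{k=1}^n a_k^{r'}$ is bounded if $r'\theta>1$ and grows no faster than $\log(n)^{1-r'\theta}$ (or $\log\log n$, if $r'\theta=1$) otherwise; in every case $\big(\sum_{k=1}^n a_k^{r'}\big)^{1/r'}=o\big(\log(n+1)^{1-1/r}\big)$, so the $\limsup$ in the target inequality is $0$, and since $a_n\le1$ forces $\|w\|_{\ell_r}^r\le\|z\|_{\ell_r}^r<1$ the inequality holds. For \eqref{reis}, with $a_n=c\,n^{-1/r'}$ we get $a_k^{r'}=c^{r'}k^{-1}$, so $\sum_{k=1}^n\frac1k\le 1+\log n$ and $\limsup_n\frac{1+\log n}{\log(n+1)}=1$ give $\limsup_n\frac{\sum_{k=1}^n w_k^*}{\log(n+1)^{1-1/r}}\le c\,\|z\|_{\ell_r}$, while $\|w\|_{\ell_r}^r=c^r\sum_n n^{-(r-1)}|z_n|^r\le c^r\|z\|_{\ell_r}^r$. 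Plugging both bounds in turns the left-hand side of the target inequality into $c^r\|z\|_{\ell_r}^r\big(2e\|\id\|^r+1\big)$, so taking $c$ with $c^{-r}=2e\|\id:m_{\Psi_r}\to\ell_r\|^r+1$ — which is exactly the constant showing up in \eqref{reis} — makes it $\|z\|_{\ell_r}^r<1$, as required.

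The rearrangement inequality and the H\"older/integral estimates are routine; the one point that needs care is the constant bookkeeping in \eqref{reis}, where both summands of the criterion of Theorem~\ref{teo: mon H_infty} come out with the same factor $c^r$, and $c$ must be calibrated so that $c^{-r}=2e\|\id\|^r+1$, which is precisely why that constant appears in the statement. (Note that Theorem~\ref{teo: mon H_infty} does all the work here, so neither \eqref{eq: dilatado mon} nor Proposition~\ref{prop: achicar coord} is needed for this corollary.)
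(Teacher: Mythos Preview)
Your proof is correct and follows essentially the same route as the paper's: both verify the hypothesis of the lower inclusion in Theorem~\ref{teo: mon H_infty} by combining a rearrangement bound with H\"older's inequality and an integral comparison for $\sum_{k=1}^n a_k^{r'}$. The only cosmetic difference is that you isolate the inequality $\sum_{k=1}^n w_k^*\le\sum_{k=1}^n a_k z_k^*$ as a preliminary step, whereas the paper encodes it via the permutation $\sigma$ realizing the decreasing rearrangement and then uses that $k\mapsto a_k^{r'}$ is decreasing together with injectivity of $\sigma$; the substance is identical, and your calibration of the constant in~\eqref{reis} matches the paper's computation.
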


\begin{proof}
Let us begin by proving \eqref{pessoa}. Fix $\theta >0$ and choose $z \in \left( \frac{1}{n^{1/r'} \log(n+2)^\theta}\right)_{n \ge 1} B_{\ell_r}$. We can find $w \in B_{\ell_r}$ so that $z_n = \frac{w_n}{n^{1/r'}\log(n + 1)^\theta}$
for every $n \in \mathbb{N}$. Since $z \in c_{0}$, there is an injective $\sigma : \NN \to \NN$ such that $z_n^* = |z_{\sigma(n)}| = \frac{|w_{\sigma(n)}|}{\sigma(n)^{1/r'} \log(\sigma(n)+2)^\theta}$. Using H\"older's inequality we get
\begin{align*}
    \frac{1}{\log(n+1)^{1/r'}} \sum_{l = 1}^n  z_l^* 
    & = \frac{1}{\log(n+1)^{1/r'}}  \sum_{l = 1}^n \frac{|w_{\sigma(l)}|}{\sigma(l)^{1/r'} \log(\sigma(l)+2)^\theta} \\
    & \le \frac{1}{\log(n+1)^{1/r'}}  \left( \sum_{l = 1}^n |w_{\sigma(l)}|^r \right)^{1/r}  \left( \sum_{l = 1}^n\frac{1}{\sigma(l) \log(\sigma(l)+2)^{r'\theta}} \right)^{1/r'} \\
    & \le \frac{1}{\log(n+1)^{1/r'}} \left( \sum_{l = 1}^n\frac{1}{\sigma(l) \log(\sigma(l)+2)^{r'\theta}} \right)^{1/r'} \\
    & \le \frac{1}{\log(n+1)^{1/r'}} \left( \sum_{l = 1}^n\frac{1}{l \log(l+2)^{r'\theta}} \right)^{1/r'} ,
\end{align*}
where the last inequality holds because $x \mapsto \frac{1}{x \log(x+2)^{r'\theta}}$ defines a decreasing function for $x >1$. 
The last term, $\frac{1}{\log(n+1)^{1/r'}} \left( \sum_{l = 1}^n\frac{1}{l \log(l+2)^{r'\theta}} \right)^{1/r'}$, goes to $0$ as $n \to \infty$, and therefore
\[
\lim\sup_{n \to \infty} \frac{1}{\log(n+1)^{1/r'}} \sum_{l = 1}^n  z_l^* = 0.
\]
Indeed, suppose that $\theta < \frac{1}{r'}$ (which me may always asume since $\frac{1}{l \log(l+2)^{r'\theta}}$ is decreasing on $\theta$). Thus, there is some $C_{r',\theta}>0$ such that
\[
\left( \sum_{l = 1}^n\frac{1}{l \log(l+2)^{r'\theta}} \right)^{1/r'} \le C_{r',\theta} \left( \int_{l = 2}^n\frac{1}{x \log(x)^{r'\theta}} dx\right)^{1/r'}  = C_{r',\theta} \left( \int_{l = \log(2)}^{\log(n)} \frac{1}{y^{r'\theta}} dy \right)^{1/r'} \le C_{r',\theta} \log(n)^{-\theta + \frac{1}{r'}}, 
\]

Then, $\frac{1}{\log(n+1)^{1/r'}} \left( \sum_{l = 1}^n\frac{1}{l \log(l+2)^{r'\theta}} \right)^{1/r'} \leq C_{r',\theta} \log(n)^{-\theta} \to 0.$

On the other hand, $z \in B_{\ell_r}$ (note that $|z_{n}| \le |w_{n}|$ for every $n$ and $w \in B_{\ell_r}$), then
\[
2e \| \id: m_{\Psi_r} \to \ell_r \|^r \left(  \limsup_{n \to \infty} \frac{\sum_{k=1}^n z_l^*}{\log(n+1)^{1-1/r}}\right)^r  + \| z \|_{\ell_r}^r  = \| z \|_{\ell_r}^r < 1,
\]
and, by Theorem~\ref{teo: mon H_infty}, $z \in \mon H_\infty (B_{\ell_r})$. \\
We give now the proof of \eqref{reis}. Take $z= \left( \frac{1}{K n^{1/r'} }  w_n \right)_{n \ge 1}$ with $w \in B_{\ell_r}$, and note that $\| z \|_{\ell_r}^r < \frac{1}{K^r} $. Proceeding as before we get
\[
K \frac{1}{\log(n+1)^{1/r'}} \sum_{l = 1}^n  z_l^* 
     \le \frac{1}{\log(n+1)^{1/r'}} \left( \sum_{l = 1}^n\frac{1}{l } \right)^{1/r'} \le 1.
\]
Since $K = ( 2e \| \id: m_{\Psi_r} \to \ell_r \|^r + 1)^{1/r}$,
\[
    2e \| \id: m_{\Psi_r} \to \ell_r \|^r  \left(  \limsup_{n \to \infty} \frac{\sum_{k=1}^n z_l^*}{\log(n+1)^{1-1/r}}\right)^r  + \| z \|_{\ell_r}^r 
    < (2e \| \id: m_{\Psi_r} \to \ell_r \|^r + 1)\frac{1}{K^r}  = 1.
\]
Again Theorem~\ref{teo: mon H_infty} gives the conclusion.
\end{proof}

\section{Lower inclusions for the set of monomial convergence of \texorpdfstring{$\mathcal P(^m \ell_r)$}.}\label{mhomogeneous}

We turn now our attention to the set of monomial convergence of homogeneous polynomials. We fix $1 < r \leq 2$ and $m \geq 2$ and define $q=(mr')'=\frac{mr}{r(m-1)+1}$. As we already pointed out, we know from \cite[Theorem~5.1]{bayart2016monomial} and \cite[Example~4.6]{defant2009domains} that 
\[
\ell_{q- \varepsilon} \subset \mon \mathcal P(^m\ell_r) 
\subset \ell_{q,\infty}
\] 
for every $\varepsilon>0$. Our aim now is to tighten this lower bound. We find a lower inclusion that gets narrower when $m$ gets bigger. 

\begin{theorem}\label{teo: main poli}
Fix $1 < r \le 2$ and, for each $m \geq 2$, define $q := (mr')'$. Then $\displaystyle \ell_{q} \subset\mon \Pp(^2 \ell_r)$; $\displaystyle \ell_{q,2} \subset\mon \Pp(^3 \ell_r)$;  $\displaystyle \ell_{q,\frac{3+\sqrt{5}}{2}} \subset\mon \Pp(^4 \ell_r)$
and
\[
\ell_{q,\frac{m}{\log(m)}} \subset\mon \Pp(^m \ell_r).
\]
for $m \ge 5$.
\end{theorem}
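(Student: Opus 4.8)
The plan is to reduce the homogeneous case to a statement about a single linear operator acting on a cone of decreasing sequences, and then obtain the various Lorentz inclusions by interpolation. Since $\mathcal P(^m\ell_r)$ is a rearrangement family (Corollary~\ref{cor: familias de reordenamiento}), it suffices to show that every decreasing $z$ in the claimed Lorentz space belongs to $\mon\mathcal P(^m\ell_r)$, i.e. that $\sum_{\jj\in\Jj(m,n)}|c_\jj(P)z_\jj|$ is bounded by $C\|P\|_{\mathcal P(^m\ell_r^n)}$ uniformly in $n$ and $P$. The starting point is Lemma~\ref{lem: BDS} (in the form \eqref{eq: BDS}): iterating the estimate that extracts one index at a time — exactly as in the first steps of the proof of Lemma~\ref{lem: 1er lem 1 < r < 2}, but now peeling off a single variable rather than two and keeping the weights sharp — produces a bound of the shape
\[
\sum_{\jj\in\Jj(m,n)}|c_\jj(P)z_\jj| \;\le\; C(m)\,\|P\|_{\mathcal P(^m\ell_r^n)}\;\Big\| \Phi_m(z)\Big\|_{?},
\]
where $\Phi_m$ is an explicit (sub)linear operator built from the maps $z\mapsto\big(\sum_{k\ge j}|z_k|^r\big)^{1/r}$, the combinatorial factors $|\ii|^{1/r}$, and summations over $\Jj(s,k)$. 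The task then becomes: identify the optimal sequence space on which $\Phi_m$ is bounded when the target is $\ell_\infty$ (boundedness of the scalar $\sum|c_\jj z_\jj|$).

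The key structural observation — this is where interpolation on cones enters — is that $\Phi_m$ behaves like a positive operator with two natural endpoints. At one endpoint, when one estimates everything by the $\ell_r$ norm (using the even/tetrahedral splitting of Remark~\ref{rem: fact indices} and Lemma~\ref{lem: tetra 1 < r < 2}, discarding the logarithmic savings), one gets that $\Phi_m$ maps $\ell_q$ into the right space — this recovers Theorem~\ref{teo ellq}, $\ell_q\subset\mon\mathcal P(^m\ell_r)$, which handles $m=2$ (where $q=q$, nothing more is claimed). At the other endpoint, exploiting the factorials $1/|[\alpha]|^{1/r'}=(\alpha!/M!)^{1/r'}$ that appear in the tetrahedral bound — which behave like $1/M!^{1/r'}$ and produce, via Stirling, a gain of $(e/M)^{M/r'}$ — one sees that on decreasing sequences $\Phi_m$ gains a factor polynomial/exponential in $m$ that can be traded, through the logarithmic weights $\log(k+1)^{c}/k^{1+\cdots}$, for membership in a strictly larger Lorentz space $\ell_{q,s}$ with $s$ growing with $m$. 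Interpolating (Lorentz–Marcinkiewicz interpolation, or the real method applied to the cone of decreasing sequences, so that only decreasing inputs matter and the quasi-norms $\|\cdot\|_{\ell_{p,q}}^*$ of the Preliminaries are the correct functionals) between these two behaviours yields the scale $\ell_{q,m/\log m}\subset\mon\mathcal P(^m\ell_r)$. For the small exponents $m=3,4$ one tracks the constants explicitly rather than using the asymptotic form: the cases $s=2$ for $m=3$ and $s=\frac{3+\sqrt5}{2}$ for $m=4$ should come out of solving the elementary optimisation (a quadratic, whence the golden-ratio-type value) that balances the number of extracted indices against the Lorentz exponent one can afford.

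I would organise the write-up as: (i) a lemma peeling off indices one at a time from \eqref{eq: BDS}, producing the operator $\Phi_m$ and the weighted sums over $\Jj(s,k)$; (ii) a lemma, in the spirit of Lemma~\ref{lem: tetra 1 < r < 2} and Lemma~\ref{lem: general}, evaluating $\sum_{\alpha\in\Lambda(s,k)}|z^\alpha||[\alpha]|^{1/r}$ for decreasing $z$ in terms of $\|z\|_{\ell_{q,s}}$-type quantities, via the even/tetrahedral decomposition; (iii) an interpolation lemma on the cone of decreasing sequences identifying the source space of $\Phi_m$ with $\ell_{q,s(m)}$; (iv) assembling and optimising in the number of peeled indices. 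The main obstacle I anticipate is step (iii): ordinary interpolation gives operators between fixed spaces, but here one needs a version that is uniform in the parameter $m$ with explicit control of the interpolation constants as $m\to\infty$ (otherwise the gain $m/\log m$ is swallowed), and one must work on cones because the relevant estimates hold only for decreasing sequences and the target space ($\ell_\infty$, or $m_\Psi$-type) is not reflexive. Getting the constants in the Lorentz bound of step (ii) sharp enough that the optimisation in step (iv) actually yields exponent $m/\log m$ — rather than something like $m^{1/2}$ — is the delicate quantitative point, and is presumably why the statement is restricted to $m\ge5$ with the low cases done by hand.
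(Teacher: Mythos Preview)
Your proposal has the right ingredients in the air --- interpolation on cones of decreasing sequences, Lorentz spaces, peeling indices via \eqref{eq: BDS} --- but the interpolation scheme you describe cannot work as stated, and the actual mechanism in the paper is structurally different.

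The gap is in your ``two endpoints''. You propose interpolating between an $\ell_q$ bound (from Theorem~\ref{teo ellq}) and some vaguely identified larger $\ell_{q,s}$ bound coming from ``trading'' factorial savings against logarithmic weights. But this makes no sense: to obtain $\ell_{q,m/\log m}\subset\mon\mathcal P(^m\ell_r)$ by interpolation you would need one endpoint at least as large as $\ell_{q,m/\log m}$ already, and you never produce such an endpoint. A constant gain in $m$ cannot be converted into a change of source space; interpolation moves you \emph{between} two established bounds, it does not manufacture a new one. The even/tetrahedral decomposition of Section~\ref{seccion Hb} is also a red herring here --- the paper does not use it at all for the polynomial case.

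What the paper actually does is fundamentally \emph{multilinear}. It regards the sum as an $m$-linear map in separate variables $z^{(1)},\ldots,z^{(m)}$ and proves (Proposition~\ref{teoremamulti}) a family of endpoint bounds: for each $1\le k\le m-1$,
\[
\sum_{\jj}|c_\jj(P)\,z^{(1)}_{j_1}\cdots z^{(m)*}_{j_m}|\;\le\; C_{m,r}\,\|z^{(k)}\|_{\ell_{q,1}}\prod_{i\neq k}\|z^{(i)}\|_{\ell_{q,\infty}}\,\|P\|,
\]
so one variable sits in $\ell_{q,1}$ and the remaining $m-1$ in $\ell_{q,\infty}$. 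These are genuine endpoints ($\ell_{q,1}$ versus $\ell_{q,\infty}$), and the point is that the position $k$ of the $\ell_{q,1}$ variable can be moved. One then freezes all but two of the variables, views the resulting bilinear piece as a linear map into a dual, and applies cone interpolation (Theorem~\ref{int op conos}) between two such maps with the $\ell_{q,1}$ slot in different positions. Iterating this (Lemma~\ref{induction lemma}) spreads the single $\ell_{q,1}$ requirement across $m-1$ variables, each landing in $\ell_{q,1/\theta}$, while the first variable lands in $\ell_{q,(1-\theta)^{-(m-2)}}$. The exponents $s=2$, $\tfrac{3+\sqrt5}{2}$, $\tfrac{m}{\log m}$ arise from choosing $\theta$ to balance $1/\theta$ against $(1-\theta)^{-(m-2)}$ --- an explicit optimisation, not an asymptotic estimate. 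Your single-variable operator $\Phi_m$ discards precisely the multilinear structure that makes the interpolation possible.
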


We start with  Theorem~\ref{teo ellq}, which proves the case $m=2$ in the previous theorem and also provides an elementary proof of the fact that $\ell_q$ is contained in $\mon \Pp(^m \ell_r)$. We even get a very good estimate for the sums. We will show later in Remark~\ref{rem: hyper} (see also the comments after it) that for $m\ge 3$ something more can be  achieved.
We need first a lemma.

\begin{lemma}\label{lem: alpha(j)}
	Let $r>1$. There exists $C_{r}>0$ such that, for every $m$,
	\[
	\sup \Big\{ \frac{m^{m/r}}{m!} \frac{n_{1}!}{n_{1}^{n_{1}/r}}  \cdots \frac{n_{k}!}{n_{k}^{n_{k}/r}} \colon k \in \mathbb{N}, n_{1}, \ldots , n_{k} \in \mathbb{N}\setminus \{0\} ,  n_{1} + \cdots + n_{k} =m  \Big\}
	\leq C_{r} m^{\frac{e^{\frac{1}{r-1}} - 1}{2}} \,.
	\]
\end{lemma}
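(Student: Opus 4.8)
\textbf{Proof plan for Lemma~\ref{lem: alpha(j)}.}

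The plan is to analyse the function $g(n) := \frac{n!}{n^{n/r}}$ and reduce the supremum to an optimisation problem over partitions of $m$. First I would rewrite the quantity to be bounded as $\frac{m^{m/r}}{m!} \prod_{i=1}^k g(n_i) = \frac{1}{g(m)}\prod_{i=1}^k g(n_i)$, so the claim becomes that $\prod_{i=1}^k g(n_i) \le C_r\, m^{(e^{1/(r-1)}-1)/2}\, g(m)$ whenever $n_1+\dots+n_k = m$. By Stirling, $g(n) = n!\, n^{-n/r}$ behaves like $\sqrt{2\pi n}\, e^{-n}\, n^{n(1-1/r)} = \sqrt{2\pi n}\, e^{-n}\, n^{n/r'}$ up to bounded factors, so $\log g(n) = \frac{n}{r'}\log n - n + \frac12\log n + O(1)$. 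The dominant term is $\frac{n}{r'}\log n$, which is \emph{superadditive} under splitting $m = \sum n_i$ (since $t\mapsto t\log t$ is convex and vanishes at $0$), and this superadditivity is what forces the product of the $g(n_i)$ to be \emph{smaller} than $g(m)$ in the leading order — the potential growth can only come from the lower-order $\sqrt{n}$ factors.

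Next I would make this precise. Writing $c = 1/r' = 1-1/r \in (0,1)$, one has
\[
\log \frac{\prod_{i=1}^k g(n_i)}{g(m)} = c\Big(\sum_i n_i\log n_i - m\log m\Big) + \tfrac12\Big(\sum_i \log n_i - \log m\Big) + O(k).
\]
The first bracket is $\le 0$; in fact $\sum_i n_i \log n_i \le m \log(\max_i n_i) \le m\log m$, but more usefully it is $\le (m - n_{\max})\log n_{\max}$ roughly, giving a genuinely negative contribution unless one part dominates. The term $\frac12\sum_i\log n_i$ is at most $\frac{k}{2}\log(m/k)$ by concavity of $\log$ (AM–GM on the $n_i$), which is maximised over $k$ near $k \approx m/e$, giving a bound of order $\frac{m}{2e}$; but the negative first term must be shown to kill this unless $k$ is small. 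So I would split into cases according to the size of $k$: if $k$ is large (many parts), the superadditive term $c(\sum n_i\log n_i - m\log m)$ is strongly negative (of order $-cm\log(\text{something})$) and dominates everything; if $k$ is small or moderate, then $\frac{k}{2}\log m$ is the governing term and I need $\frac{k}{2}\log m - c\cdot(\text{gap}) \le \frac{e^{1/(r-1)}-1}{2}\log m + O(1)$.

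The crux — and the main obstacle — is identifying the worst-case partition and seeing where the peculiar constant $\frac{e^{1/(r-1)}-1}{2} = \frac{e^{1/c'}-1}{2}$ comes from, where I expect the extremal configuration to be $k-1$ parts equal to some optimal block size $b$ plus one leftover, or all parts equal to $b$ with $m = kb$. Setting $n_i = b$ for all $i$ (so $k = m/b$), the exponent of $m$ works out from balancing
\[
\log\frac{g(b)^{m/b}}{g(m)} \approx c\Big(\tfrac{m}{b}\,b\log b - m\log m\Big) + \tfrac12\Big(\tfrac{m}{b}\log b - \log m\Big) = cm\log\tfrac{b}{m} + \Big(\tfrac{m}{2b}-\tfrac12\Big)\log b + \cdots,
\]
and one optimises over $b$ (treating $m$ as large): the $cm\log(b/m)$ term wants $b$ large, the $\frac{m}{2b}\log b$ term wants $b$ moderate; the trade-off pins down $b$ as a function of $r$ and produces the $m^{(e^{1/(r-1)}-1)/2}$ growth. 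I would carry this out by a Lagrange/derivative computation in the continuous relaxation, verify the critical point, then bound the discrete problem by the continuous one up to the constant $C_r$, handling the finitely many small-$m$ cases by absorbing them into $C_r$. The delicate points will be (i) making the reduction to ``all parts equal'' rigorous — arguing that merging two small parts or splitting off a unit never decreases the objective, a rearrangement/exchange argument on $g$ — and (ii) controlling the $O(k)$ and $\frac12\log n_i$ error terms uniformly so that the stated clean exponent survives.
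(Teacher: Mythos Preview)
Your approach via Stirling and direct optimisation is quite different from the paper's, which proceeds by induction on $m$. The paper splits into two cases on the number of parts $k$: if $k < e^{1/(r-1)}$, a direct Stirling computation bounds the ratio by a constant times $m^{(k-1)/2}$ (using only $\prod n_i^{n_i}\le m^m$ and $\prod n_i\le m^{k}$); if $k \ge e^{1/(r-1)}$, one strips a unit from the smallest part $n_k$ (reducing to a partition of $m-1$) and checks that the multiplicative factor incurred, essentially $e^{1/r}/k^{1/r'}$, is at most $1$ precisely at this threshold, so the induction hypothesis closes the argument. The exponent $\frac{e^{1/(r-1)}-1}{2}$ is thus the value of $\frac{k-1}{2}$ at the cutoff, not the growth rate of an extremal partition.

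This is where your plan runs into trouble. You propose to locate the exponent by optimising over equal-part partitions $n_i \equiv b$ via a Lagrange computation, but for any fixed $k \ge 2$ the equal-part ratio behaves (by the very Stirling expansion you wrote) like $C_k\, m^{(k-1)/2}\, k^{-m/r'}$, which tends to $0$ as $m\to\infty$; letting $k$ grow with $m$ only makes the decay faster. No choice of block size yields the stated power of $m$, because that power is an artefact of the proof threshold rather than an actual extremum of the supremum. Consequently your exchange step ``merging/splitting never decreases the objective'' is aimed at the wrong target, and the $O(k)$ Stirling error (genuinely of order $k$, hence up to $m$) cannot be absorbed by a single global optimisation without exploiting the superadditive term more carefully. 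Your small-$k$/large-$k$ dichotomy is the right instinct---it is essentially the paper's case split---but the large-$k$ case needs a \emph{reduction mechanism} (the paper uses induction) rather than an extremal search.
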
	
\begin{proof}
We proceed by induction on $m$. The statement is trivially satisfied for $m=2$ and  we assume it holds for $m-1$. Fix then $k$ and choose $n_{1}, \ldots , n_{k} \in \mathbb{N}$, all non-zero, such that $n_{1} + \cdots + n_{k} =m$. We may assume $n_{1} \geq \cdots \geq n_{k} \geq 1$. We consider two possible cases. First, if $k < e^{\frac{1}{r-1}}$ Stirling formula and the fact that $n_{j} \leq m$ for every $j$ yield
\begin{multline*}
\frac{m^{m/r}}{m!} \frac{n_{1}!}{n_{1}^{n_{1}/r}}  \cdots \frac{n_{k}!}{n_{k}^{n_{k}/r}}
\leq \frac{1}{\sqrt{2\pi m}} \frac{e^{m}}{m^{m/r'}} \prod_{j=1}^{k} \frac{\sqrt{2\pi n_{j}} n_{j}^{n_{j}/r'} e^{1/(12n_{j})}}{e^{n_{j}}} \\
\leq \big( 2\pi \big)^{\frac{k-1}{2}} e^{\sum_{j=1}^{k} \frac{1}{12n_{j}}} 
\bigg( \frac{n_{1}^{n_{1}} \cdots   n_{k}^{n_{k}}}{m^{m}} \bigg)^{\frac{1}{r'}} 
\bigg( \frac{n_{1} \cdots   n_{k}}{m} \bigg)^{\frac{1}{2}} 
\leq \big( 2\pi \big)^{\frac{k-1}{2}} e^{\sum_{j=1}^{k} \frac{1}{12j}} m^{\frac{k-1}{2}} \\
\leq \big( 2\pi \big)^{\frac{e^{\frac{1}{r-1}} - 1}{2}} e^{\frac{r}{12(r-1)}} m^{\frac{e^{\frac{1}{r-1}}-1}{2}} \,.
\end{multline*}
On the other hand, if $k \geq e^{\frac{1}{r-1}}$ we have
\begin{equation} \label{marianne}
\frac{m^{m/r}}{m!} \frac{n_{1}!}{n_{1}^{n_{1}/r}}  \cdots \frac{n_{k}!}{n_{k}^{n_{k}/r}}
= \Big(\frac{m}{m-1} \Big)^{\frac{m-1}{r}} \frac{1}{m^{1/r'}} \frac{(m-1)^{(m-1)/r}}{(m-1)!} \frac{n_{1}!}{n_{1}^{n_{1}/r}}  \cdots \frac{n_{k-1}!}{n_{k-1}^{n_{k-1}/r}} \frac{n_{k}!}{n_{k}^{n_{k}/r}} \,.
\end{equation}
If $n_{k}=1$ then $n_{1} + \cdots + n_{k-1} =m-1$ and we may use the induction hypothesis and the fact that $k \leq m$ to have
\[
\frac{m^{m/r}}{m!} \frac{n_{1}!}{n_{1}^{n_{1}/r}}  \cdots \frac{n_{k}!}{n_{k}^{n_{k}/r}}
\leq \Big(\frac{m}{m-1} \Big)^{\frac{m-1}{r}} \frac{1}{k^{1/r'}} C_{r} (m-1)^{\frac{e^{\frac{1}{r-1}} - 1}{2}}
\leq C_{r} e^{1/r}  \frac{1}{e^{\frac{1}{(r-1)r'}}}(m-1)^{\frac{e^{\frac{1}{r-1}} - 1}{2}}
\leq  C_{r} m^{\frac{e^{\frac{1}{r-1}} - 1}{2}} \,.
\]
Finally, if $n_{k}>1$ then
\[
\frac{(n_{k}-1)^{\frac{n_{k}-1}{r'}} n_{k}}{n_{k}^{n_{k}/r}}
= \Big( \frac{n_{k}-1}{n_{k}} \Big)^{\frac{n_{k}-1}{r'}} n_{k}^{\frac{1}{r'}}
\leq n_{k}^{\frac{1}{r'}} \,.
\]
We may use again the  induction hypothesis and the fact that $n_{k} \leq m/k$ to obtain from \eqref{marianne}
\[
\frac{m^{m/r}}{m!} \frac{n_{1}!}{n_{1}^{n_{1}/r}}  \cdots \frac{n_{k}!}{n_{k}^{n_{k}/r}}
\leq \Big(\frac{m}{m-1} \Big)^{\frac{m-1}{r}} \Big( \frac{n_{k}}{m} \Big)^{1/r'} C_{r}(m-1)^{\frac{e^{\frac{1}{r-1}} - 1}{2}}
\leq \Big(\frac{m}{m-1} \Big)^{\frac{m-1}{r}} \frac{1}{k^{1/r'}} C_{r} (m-1)^{\frac{e^{\frac{1}{r-1}} - 1}{2}} \,.
\]
From here we conclude as in the previous case.
\end{proof}

\begin{theorem}\label{teo ellq}
For each $1 < r \le 2$, there exists  $d_{r}>1$ such that for each $m$ and $n$, every $P \in \mathcal P(^m \mathbb C^n)$ and all $z \in \mathbb{C}^{n}$
\begin{equation}\label{hyper}
     \sum_{1 \leq j_1 \leq \dots \leq j_{m} \leq n}  \vert c_{\mathbf j}(P) z_{j_1} \dots  z_{j_m} \vert 
     \le m^{d_{r}} \Vert P \Vert_{\mathcal P(^m\ell_r^n)} \Vert z \Vert_{\ell_q^n}^{m} \,,
\end{equation} 
where $q := (mr')'$. In particular
\[
\ell_{q} \subset\mon \Pp(^m \ell_r).
\]
\end{theorem}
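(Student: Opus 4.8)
The plan is to bound the sum $\sum_{\mathbf j \in \mathcal J(m,n)} |c_{\mathbf j}(P) z_{\mathbf j}|$ by iterating the basic estimate~\eqref{eq: BDS} coming from Lemma~\ref{lem: BDS}, peeling off one index at a time. Write $z$ in decreasing order (which we may, since $\|z\|_{\ell_q}=\|z^*\|_{\ell_q}$ and the left-hand side only increases if we replace $z$ by $z^*$ after reindexing, or more simply because the statement is symmetric). Split each $\mathbf j=(\mathbf i,j_m)$ with $\mathbf i\in\mathcal J(m-1,n)$ and apply H\"older with exponents $r'$ and $r$ in the innermost variable:
\[
\sum_{\mathbf j\in\mathcal J(m,n)}|c_{\mathbf j}(P)z_{\mathbf j}|
\le \sum_{\mathbf i\in\mathcal J(m-1,n)}|z_{\mathbf i}|\Big(\sum_{k=j_{m-1}}^n|c_{(\mathbf i,k)}(P)|^{r'}\Big)^{1/r'}\Big(\sum_{k=j_{m-1}}^n|z_k|^r\Big)^{1/r},
\]
and then estimate the middle factor by~\eqref{eq: BDS}, which produces $m\,e^{1+(m-1)/r}|\mathbf i|^{1/r}\|P\|$. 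The key point is that the estimate of Lemma~\ref{lem: BDS} retains the sharp combinatorial constant $\big((m-1)^{m-1}/\alpha(\mathbf i)^{\alpha(\mathbf i)}\big)^{1/r}$ rather than just the crude $e^{m-1}|\mathbf i|^{1/r}$; it is precisely this sharper form that Lemma~\ref{lem: alpha(j)} is designed to exploit.

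Next I would iterate. After one step we are left with $\|P\|$ times a constant times a sum over $\mathcal J(m-1,n)$ of $|z_{\mathbf i}|$ times a combinatorial weight times a tail of $\|z\|_{\ell_r}$-type. Repeating the H\"older/Lemma~\ref{lem: BDS} step at each level $m, m-1, \ldots, 2$, each multi-index $\mathbf j\in\mathcal J(m,n)$ gets decomposed into its distinct blocks: if the associated $\alpha$ has nonzero entries $\alpha_{i_1},\ldots,\alpha_{i_k}$ at positions $i_1<\cdots<i_k$, the iteration produces a factor $\prod_{l=1}^k |z_{i_l}|^{\alpha_{i_l}}$, a product of tail factors controlled by $\|z\|_{\ell_r}$, and a combinatorial constant which, after collecting, is exactly $\frac{m^{m/r}}{m!}\prod_{l}\frac{\alpha_{i_l}!}{\alpha_{i_l}^{\alpha_{i_l}/r}}$ up to harmless powers of $e$ and $m$. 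At this juncture Lemma~\ref{lem: alpha(j)} bounds this combinatorial supremum by $C_r m^{(e^{1/(r-1)}-1)/2}$, a fixed power of $m$ independent of $n$. What remains is to recognize that the product of tail sums, together with the distinct-coordinate structure, reassembles into $\|z\|_{\ell_q}^m$: indeed $q=(mr')'$ is precisely the exponent for which $\ell_r$-tails over $m$ nested index ranges, combined with the extra weight coming from the $|z_{i_l}|$ factors, telescope; concretely, bounding $\big(\sum_{k\ge j}|z_k|^r\big)^{1/r}\le\|z\|_{\ell_r}$ at each of the $m-1$ inner stages and then using that the surviving sum over distinct tuples of distinct coordinates is dominated by $\big(\sum_k |z_k|^q\big)^{m/q}$ (by H\"older, since $m$ single-coordinate slots each carrying a suitable fractional power combine to the $\ell_q$ norm) gives the bound $m^{d_r}\|P\|\|z\|_{\ell_q}^m$ with $d_r$ slightly larger than $(e^{1/(r-1)}-1)/2$ to absorb the $\text{poly}(m)$ and $e^{O(m)}$ factors—wait, one must be careful here: $e^{m/r}$ is \emph{not} $\text{poly}(m)$, so the iteration cannot simply push all $e$-factors into $m^{d_r}$. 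The correct bookkeeping is that the $e^{(m-1)/r}$ from~\eqref{eq: BDS} must be cancelled by the $n_l^{n_l}/n_l!$-type gain in Lemma~\ref{lem: alpha(j)} (via Stirling $n!\sim (n/e)^n$), which is exactly why the lemma is phrased with $\frac{m^{m/r}}{m!}\prod\frac{n_l!}{n_l^{n_l/r}}$ and not a looser quantity—the $e^m$'s are already built in and conspire to cancel, leaving only the stated polynomial factor $m^{(e^{1/(r-1)}-1)/2}$.

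Finally, the inclusion $\ell_q\subset\mon\mathcal P(^m\ell_r)$ follows immediately: given $z\in\ell_q$ and any $P\in\mathcal P(^m\ell_r)$, apply~\eqref{hyper} to the truncations $P|_{\mathbb C^n}$ and let $n\to\infty$; since $m^{d_r}\|P\|_{\mathcal P(^m\ell_r)}\|z\|_{\ell_q}^m<\infty$ is a bound uniform in $n$, the monomial series of $P$ converges absolutely at $z$, hence $z\in\mon\mathcal P(^m\ell_r)$.

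The main obstacle will be the precise bookkeeping in the iteration: tracking how the sharp combinatorial constants from repeated applications of Lemma~\ref{lem: BDS} multiply together to exactly $\frac{m^{m/r}}{m!}\prod_l \frac{\alpha_{i_l}!}{\alpha_{i_l}^{\alpha_{i_l}/r}}$ (so that Lemma~\ref{lem: alpha(j)} applies and, crucially, the exponential-in-$m$ factors cancel), and simultaneously verifying that the nested $\ell_r$-tail factors recombine to give exactly the $\ell_q$-norm to the $m$-th power rather than some larger quantity. Getting the exponent $q=(mr')'$ to come out correctly is the heart of the matter, and it is forced by matching one "free" $\ell_r$ summation (contributing exponent $r$) against $m-1$ weight factors each of size $\sim \|z\|_{m_{\Psi}}$-type—more precisely by the identity $\frac1q=\frac1r-\frac1{r'}\cdot\frac{m-1}{m}=\frac{1}{mr}+\frac1{r'}\cdot\frac1m\cdot 0\ldots$; the cleanest route is probably to avoid $m_{\Psi_r}$ entirely and argue directly, bounding everything by $\ell_r$-tails and then invoking a single clean H\"older step at the end to pass from an $(\ell_r)^{\otimes m}$-type bound on distinct-coordinate tuples to $\|z\|_{\ell_q}^m$, exactly as the exponent arithmetic $q=(mr')'$ dictates.
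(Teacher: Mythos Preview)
Your proposal has the right ingredients (H\"older in the last variable, the sharp form of Lemma~\ref{lem: BDS}, and Lemma~\ref{lem: alpha(j)}), but the execution plan has a genuine gap and differs from the paper's argument in a way that matters.

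First, Lemma~\ref{lem: BDS} cannot be \emph{iterated} as you suggest. It is a statement about the coefficients $c_{(\mathbf i,k)}(P)$ of a polynomial; after one application the factor $\|P\|_{\mathcal P(^m\ell_r^n)}$ has been extracted and what remains is a purely combinatorial sum over $\mathcal J(m-1,n)$ with no polynomial left to which the lemma could be re-applied. The paper indeed applies \eqref{BDS modif} exactly once.

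Second, and more importantly, your plan for producing $\|z\|_{\ell_q}^m$ does not work as written. Bounding each tail $\big(\sum_{j\ge j_{m-1}}|z_j|^r\big)^{1/r}\le\|z\|_{\ell_r}$ throws away the dependence on $j_{m-1}$ and yields powers of $\|z\|_{\ell_r}$, not $\|z\|_{\ell_q}$; the vague ``clean H\"older step at the end'' you invoke does not recover the loss. The paper's key trick (which you do not mention) is that for decreasing $z$ and $q\le r$,
\[
\Big(\sum_{j\ge k}|z_j|^r\Big)^{1/r}=\Big(\sum_{j\ge k}|z_j|^{r-q}|z_j|^q\Big)^{1/r}\le |z_k|^{1-q/r}\Big(\sum_{j\ge k}|z_j|^q\Big)^{1/r}\le |z_k|^{1-q/r}\|z\|_{\ell_q}^{q/r}.
\]
This is what converts the $\ell_r$-tail into an $\ell_q$-contribution. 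The remaining inner sum $\sum_{\mathbf i\in\mathcal J(m-2,k)}|z_{\mathbf i}|\big(\frac{(m-2)^{m-2}}{\alpha(\mathbf i)^{\alpha(\mathbf i)}}\big)^{1/r}$ is handled not by further iteration but in one shot: multiply and divide by $|\mathbf i|$, apply Lemma~\ref{lem: alpha(j)}, and use the multinomial identity to get $C_r(m-2)^{a_r}\big(\sum_{i\le k}|z_i|\big)^{m-2}\le C_r(m-2)^{a_r}\|z\|_{\ell_q}^{m-2}k^{(m-2)/q'}$. Collecting leaves $\sum_k |z_k|^{2-q/r}k^{(m-2)/q'}=\|z\|_{\ell_{q,\,2-q/r}}^{2-q/r}$, which is at most $\|z\|_{\ell_q}^{2-q/r}$ precisely because $2-\tfrac{q}{r}\ge q$ for $m\ge 2$. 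The exponents then add up to $m$. Your own diagnosis that ``$e^{m/r}$ is not $\mathrm{poly}(m)$'' is correct and is exactly why this single-step route, which avoids accumulating exponential factors, is needed; Lemma~\ref{lem: alpha(j)} is applied once to the full $(m-2)$-index block, not built up through an $m$-fold iteration.
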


\begin{proof}
Clearly it is enough to show  \eqref{hyper} and, by \eqref{basta la desi para z*} (see also \cite[Lemma~10.15]{defant2019libro}), we may assume without loss of generality $z=z^*$. First of all, by
H\"older inequality we have
\begin{multline*}
\sum_{1 \leq j_1 \leq \dots \leq j_{m} \leq n}  \vert c_{\mathbf j}(P) z_{j_1} \dots z_{j_{m-1}} z_{j_m} \vert = \sum_{1 \leq j_1 \leq \dots \leq j_{m-1} \leq n} \vert  z_{j_1} \dots z_{j_{m-1}} \vert \sum_{j_m=j_{m-1}}^n \vert c_{\mathbf j}(P) z_{j_m}\vert  \\
 \leq \sum_{1 \leq j_1 \leq \dots \leq j_{m-1} \leq n} \vert  z_{j_1} \dots z_{j_{m-1}} \vert \bigg(\sum_{j_m=j_{m-1}}^n  \vert c_{\mathbf j}(P) \vert^{r'}\bigg)^{\frac{1}{r'}} \bigg(\sum_{j_m=j_{m-1}}^n \vert z_{j_m}^r \vert \bigg)^{\frac{1}{r}} 
\end{multline*}
Using Lemma~\ref{lem: BDS} together with the fact that for every $(\mathbf i,k)\in \mathcal J(m-1,n)$ we have $\big(\frac{(m-1)^{m-1}}{\alpha(\mathbf i,k)^{\alpha(\mathbf i,k)}}\big)\le e(m-1)\big(\frac{(m-2)^{m-2}}{\alpha(\mathbf i)^{\alpha(\mathbf i)}}\big)$ we obtain
\begin{multline*}
 \sum_{\mathbf j\in\mathcal J(m,n)}  \vert c_{\mathbf j}(P) z_{\mathbf j} \vert  \\
 \le e^{1+\frac{1}{r}}(m-1)^{\frac{1}{r}}m \Vert P \Vert_{\mathcal P(^m\ell_r^n)} \sum_{ j_{m-1}=1}^n |z_{j_{m-1}}| \sum_{\mathbf i\in\mathcal J(m-2,j_{m-1})} \vert  z_{\mathbf i}\vert  \bigg(\frac{(m-2)^{m-2}}{\alpha(\mathbf i)^{\alpha(\mathbf i)}}\bigg)^{\frac{1}{r}} \bigg(\sum_{j_m=j_{m-1}}^n \vert z_{j_m} \vert^r \bigg)^{\frac{1}{r}} 
\end{multline*}
For each fixed $1 \leq k \leq n$ we have, using \eqref{strauss}  and Lemma~\ref{lem: alpha(j)} (we write $a_{r} = \frac{e^{\frac{1}{r-1}} - 1}{2}$) and the fact that $q \leq r$
\begin{multline*}
|z_{k}| \sum_{\mathbf i\in\mathcal J(m-2,k)} \vert  z_{\mathbf i}\vert \bigg(\frac{(m-2)^{m-2}}{\alpha(\mathbf i)^{\alpha(\mathbf i)}}\bigg)^{\frac{1}{r}} \bigg(\sum_{j=k}^n \vert z_{j} \vert^r \bigg)^{\frac{1}{r}}
\leq |z_{k}| \sum_{\mathbf i\in\mathcal J(m-2,k)} \vert  z_{\mathbf i}\vert |\mathbf i|  \frac{(m-2)^{(m-2)/r}}{\alpha(\mathbf i)^{\alpha(\mathbf i)/r}|\mathbf i|} \Big( \vert z_{k} \vert^{r-q} \sum_{j=k}^n |z_{j}|^q  \Big)^{\frac{1}{r}} \\
=  C_{r}(m-2)^{a_{r}}  |z_{k}|^{2-\frac{q}{r}} \sum_{i_{1}, \ldots, i_{m-2} =1}^{k} \vert z_{i_{1}} \cdots z_{i_{m-2}} \vert \Big( \sum_{j=k}^n |z_{j}|^q  \Big)^{\frac{1}{r}} 
=  C_{r}(m-2)^{a_{r}} \Vert z \Vert_{\ell_{q}}^{\frac{q}{r}}  |z_{k}|^{2-\frac{q}{r}} \Big( \sum_{i=1}^{k} \vert z_{i}  \vert \Big)^{m-2} \\
\leq C_{r}(m-2)^{a_{r}} \Vert z \Vert_{\ell_{q}^{n}}^{\frac{q}{r} + m-2}  |z_{k}|^{2-\frac{q}{r}} k^{\frac{m-2}{q'}} \,.
\end{multline*}
Now
\[
\sum_{k=1}^{n} |z_{k}|^{2-\frac{q}{r}} k^{\frac{m-2}{q'}}
=\Vert z \Vert_{\ell_{q,2-q/r}^n}^{2-\frac{q}{r}}
\leq \Vert z \Vert_{\ell_{q}^n}^{2-\frac{q}{r}}
\]
because $2-\frac{q}{r}\ge q$ for $m\ge 2$. This altogether gives
\[
\sum_{\mathbf j\in\mathcal J(m,n)}  \vert c_{\mathbf j}(P) z_{\mathbf j} \vert  
\leq K_{r}  m (m-1)^{\frac{1}{r}} (m-2)^{a_{r}}  \Vert P \Vert_{\mathcal P(^m\ell_r^n)} \Vert z \Vert_{\ell_{q}^n}^{m} \,. \qedhere
\]
\end{proof}

This gives the case $m=2$ in Theorem~\ref{teo: main poli}. We face now the problem of getting the result for other $m$'s.
The general philosophy is always  to try to get a bound as that in \eqref{hyper}, where in the right-hand-side we have some constants that depend on $r$ and $m$ (but not on $n$, the number of variables), the norm of the polynomial and the norm of $z$ in some space $X$. This then implies $X \subset \mon \mathcal{P}(^{m} \ell_{r})$. What we do is to take the sum as depending on $m$ different variables; that is, for each polynomial $P$ we consider
\begin{equation} \label{goldberg}
  \sum_{1 \leq j_1 \leq \cdots \leq j_{m} \leq n}  \vert c_{\mathbf j}(P) z_{j_1}^{(1)} \dots  z_{j_m}^{(m)} \vert 
\end{equation}
with $z^{(1)} , \ldots , z^{(m)} \in \mathbb{C}^{n}$ and then try to get an estimate that involves 
the norms of the $z^{(j)}$ in (possibly) different spaces. This then gives that the smallest of these spaces is contained in the set of monomial convergence (see Remark~\ref{manel}). We do this
(giving the proof of Theorem~\ref{teo: main poli}) in two stages (that we present in the following two subsections). First we give an estimate for the sum that involves both $\ell_{q,1}$ and 
$\ell_{q,\infty}$ norms (the precise statement is given in Proposition~\ref{teoremamulti}). Then we interpret this inequality as operators from $\ell_{q,\infty}\times\dots\times\ell_{q,\infty}\times \ell_{q,1}\times\ell_{q,\infty}\times\dots\times \ell_{q,\infty}$ to $\ell_1(\mathcal J (m,n))$
and use interpolation techniques to improve the $\ell_{q,1}$-norm (by weakening the $\ell_{q, \infty}$-norm). This is done in Theorem~\ref{teoremamulti 2}. What happens here is that, since in
the estimate in Proposition~\ref{teoremamulti} some of the variables have to be decreasing, we cannot use general multilinear interpolation, but interpolation in cones (a more detailed explanation is given in Section~\ref{sec:interp conos}).

\subsection{First bound for the sum}
As we announced, our first step towards the proof of Theorem~\ref{teo: main poli} is to get a bound for a sum like that in \eqref{goldberg}. This becomes the main result of this section.

\begin{proposition}\label{teoremamulti}
Let $1 < r \leq 2$ and $m \geq 2$. Define  $q:=(mr')'$. There exists $C_{m,r} >1$ so that for every $n \in \mathbb{N}$, every $P \in \mathcal{P}(^{m} \mathbb{C}^{n})$, every $z^{(1)}, \ldots , z^{(m)} \in \mathbb{C}^{n}$ and $1 \leq k \leq m-1$ we have
\[
 \sum_{1 \leq j_1 \leq \dots \leq j_{m} \leq n} \big\vert c_{\mathbf j}(P) z_{j_1}^{(1)} \cdots  z_{j_{k}}^{(k)} z_{j_{k+1}}^{(k+1)*} \cdots  z_{j_m}^{(m)*} \big\vert 
\leq C_{m,r} \Vert z^{(k)} \Vert_{\ell_{q,1}} \prod_{i\neq k} \Vert z^{(i)} \Vert_{\ell_{q , \infty}} \|P\|_{\mathcal P(^m\ell_r^n)} \,.
\]
\end{proposition}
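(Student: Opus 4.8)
The strategy is to follow the peeling scheme of the proof of Theorem~\ref{teo ellq} — split off one index at a time, absorb the multinomial/combinatorial weight into a constant via Lemma~\ref{lem: alpha(j)}, and estimate what is left by $\ell_{q,\infty}$/$\ell_{q,1}$ norms — but with one extra piece of bookkeeping that drives everything: among the $m$ slots, only the $k$-th one is allowed to absorb a weight of the form $j^{-1/q'}$, because an $\ell_{q,\infty}$-variable only gives $z_j\lesssim j^{-1/q}$ and $j^{-1/q}j^{-1/q'}=j^{-1}$ is logarithmically divergent. First I would reduce to the case $z^{(i)}\ge 0$ for all $i$ and $z^{(i)}=z^{(i)*}$ (decreasing) for $i>k$, which is legitimate since $\|\cdot\|_{\ell_{q,1}}$ and $\|\cdot\|_{\ell_{q,\infty}}$ are rearrangement invariant and exactly $z^{(k+1)*},\dots,z^{(m)*}$ occur; note $q=(mr')'\in(1,r]$ (as $mr'\ge 2$), so $q'<\infty$.

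\noindent\textbf{Peeling the top index and discarding the combinatorial weight.} Splitting off the largest index $j_m$ and applying H\"older's inequality in $j_m$ with exponents $(r',r)$ separates $c_\jj(P)$ from $z^{(m)}_{j_m}$, and Lemma~\ref{lem: BDS} bounds $\big(\sum_{j_m\ge j_{m-1}}|c_{(\ii,j_m)}(P)|^{r'}\big)^{1/r'}$ by $em\,\big((m-1)^{m-1}/\alpha(\ii)^{\alpha(\ii)}\big)^{1/r}\|P\|_{\Pp(^m\ell_r^n)}$ for $\ii\in\Jj(m-1,n)$. Then, writing $\big((m-1)^{m-1}/\alpha^{\alpha}\big)^{1/r}=|[\alpha]|\cdot\tfrac{(m-1)^{(m-1)/r}}{(m-1)!}\tfrac{\alpha!}{\alpha^{\alpha/r}}$, Lemma~\ref{lem: alpha(j)} (with $m$ replaced by $m-1$) together with the trivial bound $|[\alpha]|\le(m-1)!$ absorbs the whole combinatorial weight into a constant $C_{m,r}$. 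Since $C_{m,r}$ is allowed to depend arbitrarily on $m$ — the sharp $m$-dependence being recovered later by interpolation — this crude step is harmless, and we are reduced to estimating
\[
\Sigma:=\sum_{1\le j_1\le\cdots\le j_{m-1}\le n}\Big(\prod_{l=1}^{m-1}z^{(l)}_{j_l}\Big)\Big(\sum_{j_m\ge j_{m-1}}z^{(m)}_{j_m}{}^{r}\Big)^{1/r}.
\]

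\noindent\textbf{Propagating the decay.} I would use the elementary facts $r/q=r-\tfrac{r-1}{m}>1$ (for $m\ge2$), $\tfrac1{q'}=\tfrac{r-1}{mr}>0$, and $\tfrac1q+\tfrac{i}{q'}=1+\tfrac{i-1}{q'}$. Since $z^{(m)}$ is decreasing, $z^{(m)}_j\le\|z^{(m)}\|_{\ell_{q,\infty}}j^{-1/q}$, so summing the tail gives $\big(\sum_{j_m\ge j_{m-1}}z^{(m)}_{j_m}{}^{r}\big)^{1/r}\le C_{m,r}\|z^{(m)}\|_{\ell_{q,\infty}}\,j_{m-1}^{-(m-1)/q'}$. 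Now peel $j_{m-1},j_{m-2},\dots,j_{k+1}$ from the outside in: if the current outermost weight is $j_i^{-i/q'}$, then because $z^{(i)}$ is decreasing ($i>k$),
\[
\sum_{j_i\ge j_{i-1}}z^{(i)}_{j_i}\,j_i^{-i/q'}\le\|z^{(i)}\|_{\ell_{q,\infty}}\sum_{j_i\ge j_{i-1}}j_i^{-(1+(i-1)/q')}\le C_{m,r}\,\|z^{(i)}\|_{\ell_{q,\infty}}\,j_{i-1}^{-(i-1)/q'},
\]
so the weight reproduces itself one level down while each decreasing variable surrenders its $\ell_{q,\infty}$ norm (the exponent $1+(i-1)/q'$ exceeds $1$ for every $i\ge k+1\ge 2$, which is why we must stop at the $k$-th slot). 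After these $m-1-k$ steps, $\prod_{i=k+1}^{m}\|z^{(i)}\|_{\ell_{q,\infty}}$ has been extracted and we are left with $\sum_{1\le j_1\le\cdots\le j_k\le n}\big(\prod_{l=1}^{k}z^{(l)}_{j_l}\big)\,j_k^{-k/q'}$.

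\noindent\textbf{The first $k$ slots.} Here $z^{(1)},\dots,z^{(k)}$ are arbitrary. Fixing $j_k$, I drop the ordering constraint on $j_1,\dots,j_{k-1}$ (all terms are nonnegative) and use $\sum_{j\le j_k}z^{(l)}_j\le\sum_{j\le j_k}(z^{(l)})^*_j\le(1+q')\|z^{(l)}\|_{\ell_{q,\infty}}\,j_k^{1/q'}$ for each $l<k$; this turns the power of $j_k$ into $j_k^{-1/q'}$, and the Hardy--Littlewood rearrangement inequality (the weight $j\mapsto j^{-1/q'}$ is decreasing) gives $\sum_{j_k}z^{(k)}_{j_k}\,j_k^{-1/q'}\le\sum_j(z^{(k)})^*_j\,j^{-1/q'}=\|z^{(k)}\|_{\ell_{q,1}}$. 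Combining the three stages yields the asserted inequality. I expect the only real difficulty — and the point where the choice of norms $\ell_{q,1}$ in slot $k$ and $\ell_{q,\infty}$ elsewhere is forced — to be the bookkeeping of the last two stages: one must check that every weight produced along the way is summable and that the residual $j^{-1/q'}$ factor is handed to the one slot ($k$) that can absorb it.
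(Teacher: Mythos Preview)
Your proof is correct and follows essentially the same route as the paper: H\"older plus Lemma~\ref{lem: BDS} to separate $P$, absorb the combinatorial weight into a constant, then peel the decreasing variables $j_{m-1},\dots,j_{k+1}$ from the outside in (this is the paper's Lemma~\ref{segundos}), handle $j_1,\dots,j_{k-1}$ via $\sum_{j\le N}z_j\le\sum_{j\le N}z_j^*\lesssim\|z\|_{\ell_{q,\infty}}N^{1/q'}$ (the paper's Lemma~\ref{primeros}), and finish with Hardy--Littlewood on the $k$-th slot exactly as in Lemma~\ref{desp-acot-cj}. The only cosmetic difference is that you invoke Lemma~\ref{lem: alpha(j)} to kill the weight $\big((m-1)^{m-1}/\alpha^\alpha\big)^{1/r}$ whereas the paper uses the cruder \eqref{eq: BDS} directly together with $|\ii|\le(m-1)!$; both give a constant depending only on $m,r$.
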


The proof requires some work, that we prepare with a few lemmas. But before let us make a couple of elementary comments. First of all, by definition, 
\begin{equation} \label{drum}
z_{k}^{*} \leq \Vert z \Vert_{\ell_{q, \infty}} \frac{1}{k^{1/q}}
\end{equation} 
for every $z \in \mathbb{C}^{n}$ and, then
\begin{equation} \label{blakey}
\sum_{k=N}^{M} z_{k}^{*} \leq \Vert z \Vert_{\ell_{q, \infty}} \sum_{k=N}^{M}  \frac{1}{k^{1/q}} \,.
\end{equation}
Also, for $1 \neq \alpha < 0$,
\begin{equation} \label{messengers}
\sum_{k=N}^{M} n^{\alpha} = N^{\alpha} + \sum_{k=N+1}^{M} n^{\alpha}
\leq  N^{\alpha} + \int_{N}^{M} x^{\alpha} dx 
=  N^{\alpha} +\frac{1}{\alpha + 1} \big( M^{\alpha + 1} - N^{\alpha + 1} \big) \,.
\end{equation}

\begin{lemma} \label{primeros}
Let $n, k \geq 1$ and $1 \leq q < \infty$. Then for every $z^{(1)}, \ldots, z^{(k)} \in \mathbb{C}^{n}$ and $1 \leq j \leq n$ we have
\[
\sum_{1 \leq j_1 \leq \dots \leq j_{k} \leq j} \vert z^{(1)}_{j_1} \dots z^{(k)}_{j_{k}} \vert 
\leq (q')^{k} j^{\frac{k}{q'}} \prod_{1 \leq i \leq k} \Vert z^{(i)} \Vert_{\ell_{q,\infty}}.
\]
\end{lemma}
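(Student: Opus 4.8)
The plan is to bound the left-hand side by iterating one variable at a time, peeling off the sums from the inside out. Observe that for decreasing arguments $z^{(1)},\dots,z^{(k)}$ (which we may assume, since replacing each $z^{(i)}$ by its decreasing rearrangement only increases the left-hand side while leaving the $\ell_{q,\infty}$-norms unchanged) we have $\vert z^{(i)}_{j_i}\vert = (z^{(i)})^*_{j_i} \le \Vert z^{(i)}\Vert_{\ell_{q,\infty}} j_i^{-1/q}$ by \eqref{drum}, and the ordering constraint gives $j_i \ge i$, hence $j_i^{-1/q}\le$ (constant). Actually the cleaner route: fix the innermost variable $j_k$ running from $j_{k-1}$ to $j$; pulling out $\Vert z^{(k)}\Vert_{\ell_{q,\infty}}$ and summing $\sum_{j_k=j_{k-1}}^{j} j_k^{-1/q}$, which by \eqref{messengers} with $\alpha=-1/q$ is bounded by $j_{k-1}^{-1/q} + q'\,(j^{1/q'}-j_{k-1}^{1/q'}) \le q'\, j^{1/q'}$ (using $1/q' = 1-1/q$ and $q'\ge 1$). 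This removes $j_k$ at the cost of a factor $q'\, j^{1/q'}\,\Vert z^{(k)}\Vert_{\ell_{q,\infty}}$ — but crucially it does \emph{not} depend on $j_{k-1}$ anymore, so the remaining sum over $1\le j_1\le\cdots\le j_{k-1}\le j$ has exactly the same shape with $k$ replaced by $k-1$.

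First I would set this up as an induction on $k$. The base case $k=1$ is $\sum_{j_1=1}^{j}\vert z^{(1)}_{j_1}\vert \le \Vert z^{(1)}\Vert_{\ell_{q,\infty}}\sum_{j_1=1}^{j} j_1^{-1/q}$, and by \eqref{messengers} this last sum is $\le 1 + q'(j^{1/q'}-1)\le q'\, j^{1/q'}$, giving the claimed bound with constant $q'$. For the inductive step, as sketched above, I peel off $j_k$ using \eqref{blakey}/\eqref{messengers} to get a factor $q'\,j^{1/q'}\,\Vert z^{(k)}\Vert_{\ell_{q,\infty}}$, and then apply the induction hypothesis to $\sum_{1\le j_1\le\cdots\le j_{k-1}\le j}\vert z^{(1)}_{j_1}\cdots z^{(k-1)}_{j_{k-1}}\vert \le (q')^{k-1} j^{(k-1)/q'}\prod_{i=1}^{k-1}\Vert z^{(i)}\Vert_{\ell_{q,\infty}}$. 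Multiplying the two gives $(q')^k\, j^{k/q'}\prod_{i=1}^{k}\Vert z^{(i)}\Vert_{\ell_{q,\infty}}$, as desired.

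One subtlety to be careful about: in the inductive step the innermost variable $j_k$ starts at $j_{k-1}$ rather than at $1$, so the estimate $\sum_{j_k=j_{k-1}}^{j} j_k^{-1/q}\le q'\, j^{1/q'}$ must be uniform in the lower endpoint $j_{k-1}\ge 1$; this is exactly what \eqref{messengers} delivers, since $N^{\alpha}+\frac{1}{\alpha+1}(M^{\alpha+1}-N^{\alpha+1}) = N^{-1/q} + q'(j^{1/q'}-N^{1/q'})\le N^{-1/q}+q' j^{1/q'}\le (1+q')j^{1/q'}$ — so one may need the slightly larger constant $(1+q')$ or simply absorb it by noting $N^{-1/q}\le 1\le j^{1/q'}$. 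I do not anticipate a genuine obstacle here; the only thing requiring care is bookkeeping of the constant so that it telescopes to exactly $(q')^k$ (or a harmless variant thereof), and making the reduction to decreasing $z^{(i)}$ explicit at the start.
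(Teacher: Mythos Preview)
Your inductive strategy is the same as the paper's, and your base case together with the constant bookkeeping are correct. However, the reduction step ``we may assume each $z^{(i)}$ is decreasing, since replacing by the decreasing rearrangement only increases the left-hand side'' is \emph{false}. Take $k=2$, $n=j=3$, $z^{(1)}=(1,1,0)$ (already decreasing) and $z^{(2)}=(0,0,1)$. The left-hand side equals
\[
|z^{(1)}_1|\big(|z^{(2)}_1|+|z^{(2)}_2|+|z^{(2)}_3|\big)+|z^{(1)}_2|\big(|z^{(2)}_2|+|z^{(2)}_3|\big)+|z^{(1)}_3||z^{(2)}_3|=1+1+0=2,
\]
whereas after replacing $z^{(2)}$ by $(z^{(2)})^*=(1,0,0)$ one gets $1+0+0=1$. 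So passing to decreasing rearrangements can strictly \emph{decrease} the sum, and you cannot invoke $|z^{(k)}_{j_k}|\le \Vert z^{(k)}\Vert_{\ell_{q,\infty}}\, j_k^{-1/q}$ pointwise.

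The fix is immediate and is precisely what the paper does: do not attempt the reduction at all. In the inductive step bound the inner sum trivially by extending its range,
\[
\sum_{j_k=j_{k-1}}^{j}|z^{(k)}_{j_k}|\;\le\;\sum_{j_k=1}^{j}|z^{(k)}_{j_k}|\;\le\;\sum_{j_k=1}^{j}(z^{(k)})^*_{j_k}\;\le\;q'\,j^{1/q'}\,\Vert z^{(k)}\Vert_{\ell_{q,\infty}},
\]
where the last inequality is exactly your (correct) $k=1$ computation. This bound is independent of $j_{k-1}$, so your induction closes with the stated constant $(q')^k$. Equivalently --- and this is how the paper organises it --- write $j_k$ as the outer summation variable, apply the induction hypothesis to $\sum_{1\le j_1\le\cdots\le j_{k-1}\le j_k}$ to get the factor $(q')^{k-1}j_k^{(k-1)/q'}$, bound $j_k^{(k-1)/q'}\le j^{(k-1)/q'}$, and then sum $|z^{(k)}_{j_k}|$ from $1$ to $j$.
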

\begin{proof}
We proceed by induction on $k$. For $k=1$ the statement is a straightforward consequence of \eqref{blakey} and \eqref{messengers}. Assume that the result holds for $k-1$. Then
\begin{multline*}
 \sum_{1 \leq j_1 \leq \dots \leq j_{k} \leq j} \vert z^{(1)}_{j_1} \cdots z^{(k)}_{j_{l}} \vert   
= \sum_{ j_k =1}^{j} \vert z^{(k)}_{j_k}\vert   \,
\Big( \sum_{1 \leq j_1 \leq \dots \leq j_{k-1} \leq j_{k}} \vert z^{(1)}_{j_1} \dots z^{(k-1)}_{j_{k-1}} \vert \Big) \\
\leq (q')^{k-1}  \prod_{1 \leq i \leq k-1} \Vert z^{(i)} \Vert_{\ell_{q,\infty}}
j_k^{\frac{k-1}{q'}} \sum_{ j_k =1}^{j} \vert z^{(k)}_{j_k}\vert 
 \le (q')^{k} j^{\frac{k-1}{q'}} j^{\frac{1}{q'}} 
\prod_{1 \leq i \le k} \Vert z^{(i)} \Vert_{\ell_{q,\infty}}    \,,
\end{multline*}
 which concludes the proof.
\end{proof}

\begin{lemma}\label{segundos}
Let $1 < r \leq 2$, $m\ge 3$ and $n \in \mathbb{N}$. Fix $q:=(mr')'$ and  $1 \leq k \leq m-2$. For every $z^{(i_1)},\ldots,z^{(i_k)} \in \mathbb{C}^{n}$ and $1 \leq t \leq n$ we have
\[
\sum_{t \leq j_{1} \leq \dots \leq j_{k} \leq n} \vert z^{(i_1)*}_{j_{1}} \dots z^{(i_k)*}_{j_{k}} \vert j_{k}^{\frac{1}{r} - \frac{1}{q}} 
\leq \Big( \prod_{1 \le l \leq k} \big(\frac{mr'}{m-l-1}+\frac1{t} \big) \Big) 
t^{\frac{k+1}{q'}-\frac{1}{r'}} \Big( \prod_{1 \le l \leq k} \Vert z^{(i_l)} \Vert_{\ell_{q,\infty}} \Big).
\]
\end{lemma}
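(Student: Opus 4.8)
The strategy is to prove the inequality by induction on $k$, reducing the summation over $k$ decreasing-rearrangement indices to one over $k-1$ such indices. First I would treat the base case $k=1$: here we must bound $\sum_{t \le j_1 \le n} z^{(i_1)*}_{j_1} j_1^{\frac{1}{r}-\frac{1}{q}}$. Using \eqref{drum} to control $z^{(i_1)*}_{j_1} \le \Vert z^{(i_1)} \Vert_{\ell_{q,\infty}} j_1^{-1/q}$, this reduces to estimating $\sum_{j_1=t}^n j_1^{\frac{1}{r}-\frac{2}{q}}$. The exponent $\frac{1}{r}-\frac{2}{q}$ should be negative (this is where the hypotheses $m \ge 3$, $q=(mr')'$ enter — one checks $\frac{1}{r} - \frac{2}{q} < 0$), so \eqref{messengers} applies and yields a bound of the shape $\big(\text{const} + \frac{1}{t}\big) t^{\frac{1}{r}-\frac{2}{q}+1}$; after rewriting exponents via $\frac{1}{q}+\frac{1}{q'}=1$ and $\frac{1}{r}+\frac{1}{r'}=1$ this matches the claimed $t^{\frac{2}{q'}-\frac{1}{r'}}$ with the factor $\big(\frac{mr'}{m-2}+\frac1t\big)$ coming from $\frac{1}{\alpha+1}$ in \eqref{messengers}.

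For the inductive step, assuming the statement for $k-1$, I would peel off the outermost variable $j_k$:
\[
\sum_{t \le j_1 \le \dots \le j_k \le n} \vert z^{(i_1)*}_{j_1} \cdots z^{(i_k)*}_{j_k}\vert j_k^{\frac1r-\frac1q}
= \sum_{j_k=t}^n \vert z^{(i_k)*}_{j_k}\vert j_k^{\frac1r-\frac1q} \Big(\sum_{t \le j_1 \le \dots \le j_{k-1}\le j_k} \vert z^{(i_1)*}_{j_1}\cdots z^{(i_{k-1})*}_{j_{k-1}}\vert\Big).
\]
The inner sum over $k-1$ \emph{decreasing} indices running up to $j_k$ is controlled by Lemma~\ref{primeros} (which, note, works for the decreasing rearrangements since $\Vert z^* \Vert_{\ell_{q,\infty}} = \Vert z \Vert_{\ell_{q,\infty}}$), giving a factor $(q')^{k-1} j_k^{\frac{k-1}{q'}}\prod_{l<k}\Vert z^{(i_l)}\Vert_{\ell_{q,\infty}}$. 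Then, after again using \eqref{drum} on $z^{(i_k)*}_{j_k}$, we are left with $\sum_{j_k=t}^n j_k^{\frac{k-1}{q'}+\frac1r-\frac2q}$, and \eqref{messengers} finishes the job provided the exponent is still negative. Collecting the constants, $(q')^{k-1}$ combined with the $\frac{1}{\alpha+1}$ term should reassemble into $\prod_{1\le l\le k}\big(\frac{mr'}{m-l-1}+\frac1t\big)$ — this bookkeeping is the part that needs care, matching the index shift $l \mapsto l$ in the product with the exponent $\frac{k-1}{q'}+\frac1r-\frac2q+1$ that must equal $\frac{k+1}{q'}-\frac1r'$.

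\textbf{Main obstacle.} The delicate point is verifying that the relevant exponents stay negative throughout the induction — i.e. that $\frac{k-1}{q'}+\frac1r-\frac2q < 0$ (equivalently $\frac{mr'}{m-k-1} > 0$ with the denominator positive) for all $1 \le k \le m-2$; the worst case $k=m-2$ forces exactly the constraint $q = (mr')'$ was designed to give, and the condition $m \ge 3$ is what makes $k$ range nonemptily. A secondary nuisance is tracking the exact form of the multiplicative constant $\prod_{l}\big(\frac{mr'}{m-l-1}+\frac1t\big)$: one has to identify the coefficient $\frac{1}{\alpha+1}$ appearing at the $l$-th stage, where $\alpha = \frac{l-1}{q'}+\frac1r-\frac2q-1$, and check algebraically that $\alpha+1 = \frac{m-l-1}{mr'}$ using $q'=\frac{mr'}{(m-1)r'-1}\cdot(\cdots)$; this is routine but must be done cleanly so the stated constant is exactly reproduced rather than merely bounded.
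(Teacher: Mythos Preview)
Your base case is correct and matches the paper. The inductive step, however, has a structural mismatch: by peeling off the \emph{outermost} index $j_k$, the inner sum $\sum_{t\le j_1\le\dots\le j_{k-1}\le j_k}|z^{(i_1)*}_{j_1}\cdots z^{(i_{k-1})*}_{j_{k-1}}|$ carries no weight $j_{k-1}^{\frac1r-\frac1q}$, so the induction hypothesis does not apply to it. You fall back on Lemma~\ref{primeros}, which is fine as an upper bound, but then your argument is not inductive at all and a \emph{single} application of \eqref{messengers} produces the constant $(q')^{k-1}\big(\tfrac{mr'}{m-k-1}+\tfrac1t\big)$, not the product $\prod_{l=1}^k\big(\tfrac{mr'}{m-l-1}+\tfrac1t\big)$ that the lemma claims. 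Your remark about ``the $l$-th stage'' and $\alpha+1$ describes a genuinely iterated argument, and is inconsistent with the one-shot approach you actually outlined.

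The paper instead peels off the \emph{innermost} variable $j_1$:
\[
\sum_{t\le j_1\le\dots\le j_k\le n}(\cdots)
= \sum_{j_1=t}^n |z^{(i_1)*}_{j_1}|\, \sum_{j_1\le j_2\le\dots\le j_k\le n} |z^{(i_2)*}_{j_2}\cdots z^{(i_k)*}_{j_k}|\,j_k^{\frac1r-\frac1q}.
\]
The inner sum now has exactly the shape of the statement for $k-1$ (with $t$ replaced by $j_1$), so the induction hypothesis delivers $\prod_{l=1}^{k-1}\big(\tfrac{mr'}{m-l-1}+\tfrac1{j_1}\big)\,j_1^{\frac{k}{q'}-\frac1{r'}}\prod_{l\ge 2}\Vert z^{(i_l)}\Vert_{\ell_{q,\infty}}$. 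Bounding $\tfrac1{j_1}\le\tfrac1t$ and then handling $\sum_{j_1=t}^n|z^{(i_1)*}_{j_1}|\,j_1^{\frac{k}{q'}-\frac1{r'}}$ via \eqref{drum} and \eqref{messengers} contributes exactly the missing factor $\big(\tfrac{mr'}{m-k-1}+\tfrac1t\big)$. Your route does yield a valid inequality with the correct power $t^{\frac{k+1}{q'}-\frac1{r'}}$ --- and since the only downstream use (Lemma~\ref{desp-acot-cj}) crudely bounds the constant by $(q'+1)^{m-2}$ anyway, it would suffice for the paper's purposes --- but it does not reproduce the constant as stated, and the two constants are in general incomparable.
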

\begin{proof}
First of all let us note that a simple computation shows that $\frac{s}{q'} - \frac{1}{r'} \leq - \frac{1}{mr'} < 0$ for every $1 \leq s \leq  m-1$. We now proceed by induction on $k$. For $k=1$
we use \eqref{blakey} and \eqref{messengers} to have
\[
\sum_{j=t}^{n} \vert z_{j}^{*} \vert j^{\frac{1}{r} - \frac{1}{q}}
\leq \Vert z \Vert_{\ell_{q, \infty}} \sum_{j=t}^{n} j^{\frac{2}{q'} - \frac{1}{r'} -1}
\leq  \| z \|_{\ell_{q,\infty}} 
\Big(t^{\frac{2}{q'}-\frac{1}{r'}} - (\frac{2}{q'} - \frac{1}{r'})^{-1} 
t^{\frac{2}{q'}-\frac{1}{r'} + 1}\Big)
= \big( \frac{r'm}{m-2}+\frac1{t} \big) t^{\frac{2}{q'} - \frac{1}{r'}} \| z \|_{\ell_{q,\infty}}.
\]
Let us suppose now that the statement holds for $k-1$ and prove it for $k$. 
\begin{align*}
 \sum_{t \leq j_{1} \leq \dots \leq j_{k} \leq n}  & \vert z^{(i_1)*}_{j_{1}}  \cdots z^{(i_k)*}_{j_{k}} \vert j_{k}^{\frac{1}{r} - \frac{1}{q}} \\ 
	& = \sum_{j_{1}=t}^n \vert z^{(i_1)*}_{j_{1}} \vert \sum_{j_{1} \leq j_{2} \leq \dots \leq j_{k} \leq n} \vert z^{(i_2)*}_{j_{2}} \dots z^{(i_k)*}_{j_{k}} \vert j_{k}^{\frac{1}{r} - \frac{1}{q}} \\
& \leq \sum_{j_{1}=t}^n \vert z^{(i_1)*}_{j_{1}} \vert 
 \Big( \prod_{1 \le l \leq k-1} \big(\frac{mr'}{m-l-1}+\frac1{j_{1}} \big) \Big) 
j_{1}^{\frac{k}{q'}-\frac{1}{r'}} \Big( \prod_{2 \le l \leq k} \Vert z^{(i_l)} \Vert_{\ell_{q,\infty}} \Big) \\
& \leq  \Big( \prod_{1 \le l \leq k-1} \big(\frac{mr'}{m-l-1}+\frac1{t} \big) \Big)  \Big( \prod_{2 \le l \leq k} \Vert z^{(i_l)} \Vert_{\ell_{q,\infty}} \Big)
 \sum_{j_{1}=t}^n \vert z^{(i_1)*}_{j_{1}} \vert j_{1}^{\frac{k}{q'}-\frac{1}{r'}}  \\
& \leq  \Big( \prod_{1 \le l \leq k-1} \big(\frac{mr'}{m-l-1}+\frac1{t} \big) \Big)  \Big( \prod_{1 \le l \leq k} \Vert z^{(i_l)} \Vert_{\ell_{q,\infty}} \Big)
 \sum_{j_{1}=t}^n  j_{1}^{\frac{k+1}{q'}-\frac{1}{r'}-1}  \\
& \leq  \Big( \prod_{1 \le l \leq k-1} \big(\frac{mr'}{m-l-1}+\frac1{t} \big) \Big)  \Big( \prod_{1 \le l \leq k} \Vert z^{(i_l)} \Vert_{\ell_{q,\infty}} \Big)
t^{\frac{k+1}{q'} - \frac{1}{r'}}
\Big( \frac{1}{t} - \big( \frac{k+1}{q'} - \frac{1}{r'} \big)^{-1}  \Big) \\
& =  \Big( \prod_{1 \le l \leq k-1} \big(\frac{mr'}{m-l-1}+\frac1{t} \big) \Big)  \Big( \prod_{1 \le l \leq k} \Vert z^{(i_l)} \Vert_{\ell_{q,\infty}} \Big)
t^{\frac{k+1}{q'} - \frac{1}{r'}}
\Big( \frac{1}{t} + \frac{mr'}{m-k-1}  \Big) \,. \qedhere
\end{align*}
\end{proof}

For the following next we need the following well known Hardy-Littlewood rearrangement inequality (see for example \cite[Section~10.2, Theorem~368]{hardy1952inequalities}).

\begin{lemma} \label{reordenamiento}
Let $(a_k)_{k\in \mathbb N}$ and $(b_k)_{k\in \mathbb N}$ two non-increasing sequences of non-negative real numbers. Then, for every $ m\in \mathbb N$ and every injection $\sigma: \mathbb N \to \mathbb N$ we have
$$ \sum_{k=1}^m a_{\sigma(k)}  b_{k} \leq  \sum_{k=1}^m a_{k}  b_{k}.$$
\end{lemma}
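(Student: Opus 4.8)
The plan is to prove the inequality in two elementary steps, each reducing to a monotonicity consideration. Throughout, write $S := \{\sigma(1), \dots, \sigma(m)\} \subset \mathbb{N}$; since $\sigma$ is injective this set has exactly $m$ elements, which I list in increasing order as $t_1 < t_2 < \cdots < t_m$. Note that $(a_{t_1}, \dots, a_{t_m})$ is then a rearrangement of $(a_{\sigma(1)}, \dots, a_{\sigma(m)})$, and it is itself non-increasing because $(a_j)$ is.

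\emph{First step (a finite rearrangement inequality):} I claim $\sum_{k=1}^m a_{\sigma(k)} b_k \le \sum_{k=1}^m a_{t_k} b_k$. This is the classical fact that $\sum_k x_{\pi(k)} b_k$, over permutations $\pi$ of a fixed finite family, is maximised by the permutation that orders the $x$'s in the same (here: non-increasing) way as the $b$'s. It can be proved by the standard adjacent-transposition argument: if the current arrangement is not order-matched there are indices $k < l$ with $a_{\sigma(k)} < a_{\sigma(l)}$, and swapping the two values changes the sum by $(a_{\sigma(l)} - a_{\sigma(k)})(b_k - b_l) \ge 0$ since $(b_k)$ is non-increasing; iterating this (bubble sort) reaches the order-matched arrangement $(a_{t_1}, \dots, a_{t_m})$ without ever decreasing the sum.

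\emph{Second step (index comparison):} because $t_1 < t_2 < \cdots < t_m$ are distinct positive integers, $t_k \ge k$ for every $k$, hence $a_{t_k} \le a_k$ by the monotonicity of $(a_j)$. Since each $b_k \ge 0$ this gives
\[
\sum_{k=1}^m a_{t_k} b_k \le \sum_{k=1}^m a_k b_k,
\]
and chaining with the first step yields the assertion.

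I do not expect a real obstacle here: the only point needing care is that $\sigma$ is an injection of all of $\mathbb{N}$, not a permutation of $\{1,\dots,m\}$, so its image may contain arbitrarily large indices — but that is exactly what the second step absorbs, using that large indices contribute small $a$-values. An alternative, transposition-free route is the layer-cake method: write $a_j = \int_0^\infty \mathbf{1}[a_j > \lambda]\,d\lambda$, observe that $\{j : a_j > \lambda\} = \{1, \dots, L(\lambda)\}$ is an initial segment by monotonicity of $(a_j)$, so that $\#\{k \le m : a_{\sigma(k)} > \lambda\} \le \min(m, L(\lambda))$ by injectivity of $\sigma$; then apply $\sum_{k \in A} b_k \le \sum_{k=1}^{|A|} b_k$ for $A \subseteq \{1,\dots,m\}$ (non-negativity and monotonicity of $(b_k)$) and integrate in $\lambda$ to conclude in one line.
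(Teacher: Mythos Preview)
Your proof is correct. Both the two-step argument (finite rearrangement inequality via adjacent transpositions, followed by the observation $t_k \ge k$) and the alternative layer-cake sketch are valid, and you have correctly handled the subtlety that $\sigma$ is an injection of $\mathbb{N}$ rather than a permutation of $\{1,\dots,m\}$.

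As for comparison with the paper: there is nothing to compare. The paper does not prove this lemma at all; it simply states it as the well-known Hardy--Littlewood rearrangement inequality and cites \cite[Section~10.2, Theorem~368]{hardy1952inequalities}. Your write-up therefore supplies a self-contained argument where the paper only gives a reference.
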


\begin{lemma}\label{desp-acot-cj}
Let $1 < r \leq 2$, $m \geq 3$. Fix $q:=(mr')'$ and  $1 \leq k \leq m-2$. For every $z^{(1)},\ldots,z^{(k)} \in \mathbb{C}^{n}$ we have
\[
\sum_{1 \leq j_1 \leq \dots \leq j_{m-1} \leq n} \vert z^{(1)}_{j_1} \cdots z^{(k)}_{j_{k}}  z^{(k+1)*}_{j_{k+1}} \cdots z^{(m-1)*}_{j_{m-1}} \vert j_{m-1}^{\frac{1}{r} - \frac{1}{q}}  \leq (q'+1)^{m-2}
\Vert z^{(k)} \Vert_{\ell_{q,1}} \prod_{\substack{1 \leq i \leq m-1 \\ i  \neq k}} \Vert z^{(i)} \Vert_{\ell_{q,\infty}}\,.
\]
\end{lemma}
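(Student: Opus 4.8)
The plan is to split off the single ``$\ell_{q,1}$-variable'' $z^{(k)}$ and the exponentiated last index $j_{m-1}^{1/r-1/q}$ and handle the rest with the two estimates already prepared. First I would order the summation so that $j_{k}$ is summed last among the first $k$ indices but before the decreasing block: group the sum as
\[
\sum_{j_k=1}^{n} |z^{(k)}_{j_k}| \Bigg( \sum_{1\le j_1\le\cdots\le j_{k-1}\le j_k} |z^{(1)}_{j_1}\cdots z^{(k-1)}_{j_{k-1}}| \Bigg)\Bigg( \sum_{j_k\le j_{k+1}\le\cdots\le j_{m-1}\le n} |z^{(k+1)*}_{j_{k+1}}\cdots z^{(m-1)*}_{j_{m-1}}|\, j_{m-1}^{\frac1r-\frac1q} \Bigg).
\]
Here there are $m-1-k\ge 1$ decreasing factors, so Lemma~\ref{segundos} (applied with its parameter $k$ replaced by $m-1-k$ and $t=j_k$) bounds the inner decreasing sum by a constant times $j_k^{\frac{m-k}{q'}-\frac1{r'}}\prod_{i=k+1}^{m-1}\|z^{(i)}\|_{\ell_{q,\infty}}$, where the constant $\prod_{1\le l\le m-1-k}\big(\frac{mr'}{m-l-1}+\frac1{j_k}\big)$ is uniformly bounded (e.g. by $(q'+1)^{m-1-k}$ after noting $\frac{mr'}{m-l-1}\le mr'=q'$ and $\frac1{j_k}\le 1$). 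Meanwhile Lemma~\ref{primeros} (with its $k$ replaced by $k-1$, $j=j_k$) bounds the first parenthesis by $(q')^{k-1} j_k^{\frac{k-1}{q'}}\prod_{i=1}^{k-1}\|z^{(i)}\|_{\ell_{q,\infty}}$.

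Collecting the two bounds, the whole expression is at most a constant (of the form $(q'+1)^{m-2}$) times $\prod_{i\ne k}\|z^{(i)}\|_{\ell_{q,\infty}}$ multiplied by
\[
\sum_{j_k=1}^{n} |z^{(k)}_{j_k}|\, j_k^{\frac{k-1}{q'}+\frac{m-k}{q'}-\frac1{r'}} = \sum_{j_k=1}^{n} |z^{(k)}_{j_k}|\, j_k^{\frac{m-1}{q'}-\frac1{r'}}.
\]
The key arithmetic check is that the exponent equals $-\frac1q$: since $q=(mr')'$ we have $\frac1{q'}=\frac1{mr'}$ and $\frac1{r'}-\frac1{q'}=\frac{m-1}{mr'}=\frac{m-1}{q'}$, so $\frac{m-1}{q'}-\frac1{r'}=-\frac1{q'}=\frac1q-1$, hence $j_k^{\frac{m-1}{q'}-\frac1{r'}}=j_k^{1/q-1}$. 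Therefore $\sum_{j_k} |z^{(k)}_{j_k}|\, j_k^{1/q-1}$ is exactly (dominated by, after passing to the decreasing rearrangement via Lemma~\ref{reordenamiento}, which only increases the sum since $(j^{1/q-1})_j$ is decreasing) $\|z^{(k)}\|_{\ell_{q,1}}$ up to the harmless normalization between $\|\cdot\|_{\ell_{q,1}}$ and its starred version.

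The main obstacle I anticipate is bookkeeping rather than conceptual: one must make sure the summation order is legitimate (the constraint $1\le j_1\le\cdots\le j_{m-1}$ is respected when we peel off $j_k$ in the middle), track the two index-shifts in Lemmas~\ref{primeros} and \ref{segundos} correctly, and verify that all the $j_k$-dependent constants produced by Lemma~\ref{segundos} are bounded uniformly in $j_k$ so they can be absorbed into $(q'+1)^{m-2}$. The passage from $\sum_j |z_j| j^{1/q-1}$ to $\|z^{(k)}\|_{\ell_{q,1}}$ needs the rearrangement inequality of Lemma~\ref{reordenamiento} applied to $a_j=|z^{(k)}_j|$ and $b_j=j^{1/q-1}$; this is where decreasingness of the weight is used, and it is precisely why only the one variable $z^{(k)}$ appears with the $\ell_{q,1}$-norm and not its rearrangement. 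Everything else is Hölder and integral comparison, already isolated in the preliminary lemmas.
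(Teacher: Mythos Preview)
Your proposal is correct and follows essentially the same route as the paper's proof: you isolate $j_k$, apply Lemma~\ref{primeros} to the first $k-1$ indices and Lemma~\ref{segundos} to the decreasing block, combine the powers of $j_k$ via the identity $\frac{m-1}{q'}-\frac{1}{r'}=\frac{1}{q}-1$, and finish with the Hardy--Littlewood rearrangement inequality (Lemma~\ref{reordenamiento}) to get $\Vert z^{(k)}\Vert_{\ell_{q,1}}$. The only cosmetic difference is that no ``starred'' normalization is needed at the end: by definition $\Vert z\Vert_{\ell_{q,1}}=\sum_j z_j^{*}\,j^{1/q-1}$, so after the rearrangement step the sum is exactly $\Vert z^{(k)}\Vert_{\ell_{q,1}}$.
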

\begin{proof}
We begin by splitting the sum in a convenient way
\begin{multline*}
\sum_{1 \leq j_1 \leq \dots \leq j_{m-1} \leq n} \vert z^{(1)}_{j_1} \cdots z^{(k)}_{j_{k}}  z^{(k+1)*}_{j_{k+1}} \cdots z^{(m-1)*}_{j_{m-1}} \vert j_{m-1}^{\frac{1}{r} - \frac{1}{q}} \\
= \sum_{j_k=1}^n \vert z^{(k)}_{j_k}\vert 
\Big(  \sum_{j_k \leq j_{k+1} \leq \dots \leq j_{m-1} \leq n} \vert z^{(k+1)*}_{j_{k+1}} \dots z^{(m-1)*}_{j_{m-1}} \vert j_{m-1}^{\frac{1}{r} - \frac{1}{q}} \Big) 
\Big(  \sum_{1 \leq j_1 \leq \dots \leq j_{k-1} \leq j_k} \vert z^{(1)}_{j_1} \dots z^{(k-1)}_{j_{k-1}} \vert \Big) \,.
\end{multline*}
We fix $j_{k} $ and bound the first block using Lemma~\ref{segundos}, taking into account that we have now $m-k-1$ $z$'s and that $\frac{1}{j_{k}} + \frac{mr'}{m-l-1} \leq q'+1$ for every  $1 \leq l \leq m-k-1$,
\begin{multline*}
\sum_{j_k \leq j_{k+1} \leq \dots \leq j_{m-1} \leq n} \vert z^{(k+1)*}_{j_{k+1}} \dots z^{(m-1)*}_{j_{m-1}} \vert j_{m-1}^{\frac{1}{r} - \frac{1}{q}} \\
\leq j_{k}^{\frac{m-k}{q'} - \frac{1}{r'}} \Big(\prod_{1 \leq l \leq m-k-1}\frac{1}{j_{k}} + \frac{mr'}{m-l-1} \Big)
\Big( \prod_{k+1 \leq i \leq m-1} \Vert z^{(i)} \Vert_{\ell_{q,\infty}}  \Big) \\
\leq  j_{k}^{\frac{m-k}{q'} - \frac{1}{r'}} (q'+1)^{m-k-1} \prod_{k+1 \leq i \leq m-1} \Vert z^{(i)} \Vert_{\ell_{q,\infty}}  \,.
\end{multline*}
With this, and bounding the second block using Lemma~\ref{primeros} we get
\[
\sum_{1 \leq j_1 \leq \dots \leq j_{m-1} \leq n} \vert z^{(1)}_{j_1} \cdots z^{(k)}_{j_{k}}  z^{(k+1)*}_{j_{k+1}} \cdots z^{(m-1)*}_{j_{m-1}} \vert j_{m-1}^{\frac{1}{r} - \frac{1}{q}}
\leq (q'+1)^{m-2} \prod_{i \neq k} \Vert z^{(i)} \Vert_{\ell_{q,\infty}}  \sum_{j_k=1}^n \vert z^{(k)}_{j_k}\vert j_{k}^{\frac{k-1}{q'}+\frac{m-k}{q'}-\frac{1}{r'}} .
\]
It easy to see that $\frac{k-1}{q'}+\frac{m-k}{q'}-\frac{1}{r'}=\frac{1}{q}-1$. Therefore, using Lemma~\ref{reordenamiento} we have 
\[
\sum_{j_k=1}^n \vert z^{(k)}_{j_k} \vert j_k^{\frac{1}{q}-1} \leq \sum_{j_k=1}^n \vert (z^{(k)})^*_{j_k} \vert j_k^{\frac{1}{q}-1} = \Vert z^{(k)} \Vert_{\ell_{q,1}}. \qedhere
\]
\end{proof}

As it was the case for the study of holomorphic functions, Lemma~\ref{lem: BDS} (in fact \eqref{eq: BDS},  which is \cite[Lemma~3.5]{bayart2016monomial}) is a crucial tool for the proof of Proposition~\ref{teoremamulti}.

\begin{proof}[Proof of Proposition~\ref{teoremamulti}]
We begin by using H\"older's inequality and \eqref{eq: BDS} (noting that $\vert \mathbf{i} \vert \leq (m-1)!$ for every $\mathbf{i} \in \mathcal{J}(m-1,n)$) and \eqref{drum} to have
\begin{align*}
\sum_{1 \leq j_1 \leq \dots \leq j_{m} \leq n} & \big\vert c_{\mathbf j}(P) z_{j_1}^{(1)} \cdots  z_{j_{k}}^{(k)} z_{j_{k+1}}^{(k+1)*} \cdots  z_{j_m}^{(m)*} \big\vert 
= \sum_{1 \leq j_1 \leq \dots \leq j_{m-1} \leq n} \vert  z^{(1)}_{j_1} \dots z^{(m-1)*}_{j_{m-1}} \vert \sum_{j_m=j_{m-1}}^n \vert c_{\mathbf j}(P) z^{(m)*}_{j_m}\vert \\
& \leq \sum_{1 \leq j_1 \leq \dots \leq j_{m-1} \leq n} \vert  z^{(1)}_{j_1} \dots z^{(m-1)*}_{j_{m-1}} \vert 
\Big(\sum_{j_m=j_{m-1}}^n   c_{\mathbf j}(P)^{r'}\Big)^{\frac{1}{r'}} \Big(\sum_{j_m=j_{m-1}}^n \vert z^{(m)*}_{j_m} \vert^r \Big)^{\frac{1}{r}} \\
& \leq (m-1)!^{\frac{1}{r}}me^{1+\frac{m-1}{r}} \Vert P \Vert_{\mathcal P(^m\ell_r^n)} \Vert z^{(m)} \Vert_{\ell_{q,\infty}}
\sum_{1 \leq j_1 \leq \dots \leq j_{m-1} \leq n} \vert  z^{(1)}_{j_1} \dots z^{(m-1)*}_{j_{m-1}} \vert
\Big(\sum_{j_m=j_{m-1}}^n j_{m}^{-\frac{r}{q}}  \Big)^{\frac{1}{r}} \,.
\end{align*}
Observe now that, for each $N \in \mathbb{N}$ we have $N^{-r/q} \leq 2^{r/q} x^{-r/q}$ for every $N \leq x < N+1$. Then
\[
\sum_{j_m=j_{m-1}}^n j_{m}^{-\frac{r}{q}} \leq 2^{\frac{r}{q}} \int_{j_{m-1}}^{n}  x^{-\frac{r}{q}} dx
\leq 2^{\frac{r}{q}} \frac{q}{r-q} j_{m-1}^{1-\frac{r}{q}}\,.
\]
The proof now finishes with a straightforward application of Lemma~\ref{desp-acot-cj}.
\end{proof}

\subsection{Real interpolation on cones.}

\label{sec:interp conos}

What we are going to do now is to look at the inequalities for sums like in \eqref{strauss} from the point of view of multilinear mappings. We fix a polynomial $P \in \mathcal{P}(^{m} \mathbb{C}^{n})$ and consider the mapping $\mathbb{C}^{n} \times \cdots \times \mathbb{C}^{n} \to \ell_1(\mathcal J (m,n))$,
given by
\begin{equation} \label{niza}
(z^{(1)}, \dots, z^{(m)}) \mapsto \big( c_{\mathbf j}(P) z^{(1)}_{j_1} \dots z^{(m)}_{ j_{m}} \big)_{\mathbf j \in \mathcal J(m,n)}\,.
\end{equation}
Note that, since everything here is finite dimensional, the mapping is well defined. The idea is, then, to consider norms on the domain spaces so that the norm of this mapping is bounded by
a term involving the norm of the polynomial and some constant independent of $n$. Since the inequality that we get in Proposition~\ref{teoremamulti} requires some variables to be decreasing 
we have to restrict ourselves to cones of decreasing sequences. To be more precise, if we denote $\ell_{q, s}^{d} := \{ z \in \ell_{q,s} \colon \vert z \vert = z^{*}  \}$ for $1 \leq s \leq \infty$, Proposition~\ref{teoremamulti} tells us that there is a constant $C_{m,r}>1$ (independent of $P$ and $n$) such that, for every $1 \leq k \leq m-1$, the  mapping 
\begin{equation}\label{1079}
T_k: \underbrace{\ell_{q,\infty}^{n} \times \dots \times \ell_{q,\infty}^{n}}_{k-1} \times \ell_{q,1}^{n} \times \underbrace{(\ell_{q,\infty}^{n})^d \times \dots \times  (\ell_{q,\infty}^{n})^d}_{m-k} \to \ell_1(\mathcal J (m,n)),
\end{equation}
given by \eqref{niza} satisfies
\begin{equation} \label{voltaraser}
\Vert T_{k} \Vert \leq C_{m,r} \|P\|_{\mathcal P(^m\ell_r^n)}.
\end{equation}
All these mappings have the same defining formula (which is $m$-linear), so it is tempting to apply multilinear interpolation. But, since we need to restrict ourselves to the cone of non-increasing sequences in the last $m-k$ variables, we are not able to directly apply the classical multilinear interpolation results, but interpolation in cones. \\
For the general theory of interpolation we follow (and refer the reader to) \cite{BerLof76}. Since (as we have already explained) we have to consider linear operators on cones, we use the $K$-method of 
interpolation for operators on the cone of non-increasing sequences, as presented in \cite{cerda1996interpolation}. Then the main result of this section, from which Theorem~\ref{teo: main poli}, follows is the following.

\begin{theorem}\label{teoremamulti 2}
Let $1 < r \leq 2$ and $m \geq 3$. Define $q:=(mr')'$ and
\[
s = \begin{cases}
2 & \text{ if } m=3 \\
\frac{3+\sqrt{5}}{2}& \text{ if } m=4 \\
\frac{m}{\log(m)}& \text{ if } m \geq 5
\end{cases}
\]
There exists a constant $C_{m,r}\geq 1$ such that, for every $P \in \mathcal P(^m \mathbb{C}^n)$ the $m$-linear mapping
\[
T: \underbrace{(\ell_{q,s}^{n})^d \times \dots \times (\ell_{q,s}^{n})^d}_{m-1}\times (\ell_{q,\infty}^{n})^d \to \ell_1(\mathcal J (m,n))
\]
given by
\[
(z^{(1)}, \dots, z^{(m)}) \mapsto \big( c_{\mathbf j}(P) z^{(1)}_{j_1} \dots z^{(m)}_{ j_{m}} \big)_{\mathbf j \in \mathcal J(m,n)}
\]
satisfies
\[
\Vert T \Vert \leq C_{m,r} \|P\|_{\mathcal P(^m\ell_r^n)}\,.
\]
\end{theorem}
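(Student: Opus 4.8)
The plan is to derive Theorem~\ref{teoremamulti 2} from Proposition~\ref{teoremamulti} by viewing the collection of bounds \eqref{voltaraser} as a statement about a single $m$-linear map and interpolating. Fix $P\in\mathcal P(^m\mathbb C^n)$ and consider the $m$-linear map $T$ defined by \eqref{niza}. Proposition~\ref{teoremamulti} gives, for each $1\le k\le m-1$, a bound of norm $\le C_{m,r}\|P\|_{\mathcal P(^m\ell_r^n)}$ when the $k$-th slot carries the $\ell_{q,1}$-norm and every other slot carries the $\ell_{q,\infty}$-norm (and, crucially, all slots from the $(k+1)$-st onward are restricted to the cone $(\ell_{q,\infty}^n)^d$ of non-increasing sequences, while slots $1,\dots,k-1$ are unrestricted; note the map is symmetric in a suitable sense, so one may always reorder so that the $\ell_{q,1}$-slot sits where convenient). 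The first step is therefore to record these $m-1$ (equivalently, after symmetrization, $m$) ``vertex'' estimates in a form to which the cone interpolation machinery of \cite{cerda1996interpolation} applies: each $T_k$ is a \emph{linear} operator in its free variable when the other $m-1$ variables are frozen, and the frozen variables live in cones of non-increasing sequences.

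The core of the argument is an iterated real interpolation on cones, done one variable at a time. The idea is: freeze variables $2,\dots,m$ and interpolate in the first variable between the estimate that is ``good'' there (the one with $\ell_{q,1}$ in slot $1$) and the estimates that are ``bad'' there (those with $\ell_{q,\infty}$ in slot $1$); since $(\ell_{q,1},\ell_{q,\infty})_{\theta,s}=\ell_{q,s}$ with $\frac1s$ corresponding to $\theta$, a single interpolation step upgrades slot $1$ from $\ell_{q,\infty}$ to $\ell_{q,s_1}$ for some finite $s_1$, at the cost of still having $\ell_{q,\infty}$ in the remaining slots. Repeating this in slots $2,3,\dots,m-1$ (using at each stage the already-interpolated estimates produced by the previous stages as the endpoints) progressively improves all but the last slot; the last slot is kept at $\ell_{q,\infty}$. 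The bookkeeping of the interpolation parameters is what produces the peculiar values $s=2$ for $m=3$, $s=\tfrac{3+\sqrt5}{2}$ for $m=4$, and $s=\tfrac{m}{\log m}$ for $m\ge5$: at each step one balances the exponent gained against the exponents one is forced to spend, and solving the resulting recursion (a fixed-point/golden-ratio type relation for small $m$, an asymptotic $\log m$ estimate for large $m$) yields exactly these $s$. Throughout, the constants coming out of the $K$-functional estimates depend only on $m$ and $r$ (through $q$, $q'$, and the number of interpolation steps), never on $n$, so the final bound reads $\|T\|\le C_{m,r}\|P\|_{\mathcal P(^m\ell_r^n)}$ as claimed.

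Two technical points have to be handled with care. First, the classical multilinear interpolation theorem does \emph{not} apply directly, precisely because slots $k{+}1,\dots,m$ are confined to the cone of non-increasing sequences, which is not a linear subspace; this is why one must use the cone-$K$-method of \cite{cerda1996interpolation}, which interpolates a linear operator defined on such a cone and identifies $(\ell_{q,p_0}^d,\ell_{q,p_1}^d)_{\theta,s}$ with $\ell_{q,s}^d$ (the decreasing-rearrangement structure makes the $K$-functional of a fixed decreasing sequence computable, essentially $K(t,z)\sim\big(\sum_{k\le g(t)} (z_k^* k^{1/q})^{p_0}\,\tfrac1k\big)^{1/p_0}+\dots$). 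One must verify the endpoint spaces appearing at each stage of the iteration are genuinely of the form needed to feed the next stage — i.e.\ that interpolating in one cone variable while the others stay fixed in their cones is a legitimate linear-operator-on-a-cone situation, which it is, since for fixed values of the other variables the map is linear in the active one and maps the cone into $\ell_1(\mathcal J(m,n))$. Second, one needs the reindexing/symmetry observation that allows the ``good'' slot in Proposition~\ref{teoremamulti} to be moved: since $c_{\mathbf j}(P)z^{(1)}_{j_1}\cdots z^{(m)}_{j_m}$ is symmetric under permuting the pairs (slot, index), the estimate with the $\ell_{q,1}$-norm in slot $k$ can be converted into one with it in slot $1$, at the price of permuting which of the later slots are required to be decreasing — and one checks that this permutation is compatible with the order in which the iterated interpolation processes the slots.

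The main obstacle I expect is the combinatorial/analytic bookkeeping of the interpolation exponents: setting up the recursion that tracks, after $j$ steps, which $\ell_{q,s}$-norm each processed slot carries and what endpoint data remains available, then solving it to extract the exact values of $s$ for $m=3,4$ and the asymptotic $s\sim m/\log m$ for $m\ge 5$ (including checking that the constants stay finite and $n$-independent at every step). The interpolation theory itself, once one commits to the cone $K$-method, is standard; the delicate part is organizing the iteration so that the endpoints needed at step $j$ are exactly the estimates produced at steps $<j$, and squeezing the parameters optimally. A secondary nuisance is making sure the cone restriction is preserved correctly through the iteration — one cannot interpolate a slot and then later demand that slot be decreasing, so the order of processing must respect which slots Proposition~\ref{teoremamulti} forces to be decreasing, and the symmetrization step above is what makes this consistent.
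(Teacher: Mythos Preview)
Your overall strategy—iterated real interpolation on cones, starting from the $m-1$ endpoint estimates of Proposition~\ref{teoremamulti}—is exactly the paper's approach, and you correctly identify that the cone $K$-method of \cite{cerda1996interpolation} is forced on us by the decreasing-sequence restriction. However, two points in your execution would fail as written.

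First, the map is \emph{not} symmetric in any useful sense. The sum runs over $j_1\le\cdots\le j_m$, so slot $i$ is tied to the $i$-th smallest index; permuting slots destroys this. Proposition~\ref{teoremamulti} places the $\ell_{q,1}$-norm at slot $k$ and forces slots $k{+}1,\dots,m$ to lie in the decreasing cone \emph{precisely because} its proof exploits the ordering $j_k\le j_{k+1}\le\cdots$. You cannot freely relocate the $\ell_{q,1}$-slot.

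Second, and more seriously, the ``freeze variables $2,\dots,m$ and interpolate in the first'' step does not give the right bound. With $z^{(2)},\dots,z^{(m)}$ fixed, the two endpoint norms of the resulting linear operator carry $\|z^{(2)}\|_{\ell_{q,\infty}}$ and $\|z^{(2)}\|_{\ell_{q,1}}$ respectively; interpolating with parameter $\theta$ produces a factor $\|z^{(2)}\|_{\ell_{q,\infty}}^{1-\theta}\|z^{(2)}\|_{\ell_{q,1}}^{\theta}$, and this is \emph{not} bounded by $C\|z^{(2)}\|_{\ell_{q,1/\theta}}$ with $C$ independent of $n$ (take $z^{(2)*}_k=k^{-1/q}$ on a short block and tiny elsewhere to see the failure). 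So one interpolation step processes only one slot, and the iteration stalls.

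The paper's fix is a dualization trick: freeze all but \emph{two} slots together with a functional $\varphi\in\ell_1(\mathcal J(m,n))'$, and regard the resulting object as a linear map $T_v$ from one free slot into the \emph{dual} of the other, $T_v:X_j\to X_k'$ (this is \eqref{carminho}). Now both endpoints are honest operator norms of the same $T_v$, with domain a cone and codomain an ordinary Banach space; Theorem~\ref{int op conos} plus the identification $(\ell_{q,p_0}',\ell_{q,p_1}')_{\theta,p}=\ell_{q,p}'$ then upgrades \emph{both} active slots simultaneously. Iterating this (Lemma~\ref{induction lemma}) yields $T^k(\theta)$ with slot $1$ in $\ell_{q,(1-\theta)^{-k}}^d$, slots $2,\dots,k{+}1$ in $\ell_{q,1/\theta}^d$, and the remaining slots in $\ell_{q,\infty}^d$; taking $k=m-2$ and choosing $\theta$ so that $(1-\theta)^{-(m-2)}$ and $1/\theta$ are both at least $s$ is what gives $s=2$ ($m=3$, $\theta=\tfrac12$), $s=\tfrac{3+\sqrt5}{2}$ ($m=4$, $\theta=\tfrac{3-\sqrt5}{2}$), and $s=\tfrac{m}{\log m}$ for $m\ge5$. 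Your instinct about the ``fixed-point/golden-ratio'' bookkeeping is right, but it only kicks in once the dualization has been set up correctly.
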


\begin{remark} \label{manel}
If we take $z^{(1)} = \ldots = z^{(m)} = z$ and observe that $\Vert z \Vert_{\ell_{q,\infty}} \leq \Vert z \Vert_{\ell_{q,s}}$, Theorem~\ref{teoremamulti 2} gives
\[
  \sum_{1 \leq j_1 \leq \cdots \leq j_{m} \leq n}  \vert c_{\mathbf j}(P) z_{j_1}^{*} \cdots  z_{j_m}^{*} \vert \leq C_{m,r} \Vert z \Vert_{\ell_{q,s}}^{m} \|P\|_{\mathcal P(^m\ell_r^n)}
\]
for every $P \in \mathcal P(^m \mathbb{C}^n)$ and $z \in \mathbb{C}^{n}$. A standard argument shows that $z^{*} \in \mon \mathcal{P}(^{m}\ell_{r})$ for every $z \in \ell_{q,s}$ and,
then, Corollary~\ref{cor: familias de reordenamiento} implies $\ell_{q,s} \subset \mon  \mathcal{P}(^{m}\ell_{r})$. This gives Theorem~\ref{teo: main poli}.
\end{remark}

Before we proceed, let us fix some notation. Given a Banach function lattice $X$ (in particular a sequence space or a finite dimensional Banach space, on which we are mainly interested), we write $X^d$ for the cone of non-increasing functions in $X$. If $Y$ is any Banach space and $S: X \to Y$ is a linear operator we can restrict it to the cone and denote
\begin{equation} \label{copland}
\Vert S : X^{d} \to Y \Vert = \inf \{ \Vert S(x) \Vert_{Y} \colon x \in X^{d} , \, \Vert x \Vert< 1  \} \,.
\end{equation}
Clearly neither is $X^{d}$ a vector space, nor is $\Vert S \Vert$ a norm. We will later use an analogous notation for $m$-linear mappings. We are now ready to state our main tool to interpolate in cones. It  is a direct corollary of  \cite[Theorem~1--(b)]{cerda1996interpolation} (recall that we are using the notation as introduced there).
\begin{theorem}\label{int op conos}
Given a pair of quasi-Banach function lattices $(X_0,X_1)$, a pair of quasi-Banach spaces $(Y_0,Y_1)$ and a linear operator $S$ defined both $X_{0} \to Y_{0}$ and $X_{1} \to Y_{1}$ with
\[
 \| S : X_0^d \longrightarrow Y_0 \|  \le M_0 \quad \text{ and } \quad
 \| S : X_1^d \longrightarrow Y_1 \|  \le M_1 \,.
\]
Then for every $0 < \theta < 1$ the operator $ S : (X_0^d,X_1^d)_{\theta,a} \longrightarrow (Y_0,Y_1)_{\theta,a} $ is well defined and
\[
\| S : (X_0^d,X_1^d)_{\theta,a} \longrightarrow (Y_0,Y_1)_{\theta,a} \| \le M_0^{1-\theta} M_1^\theta.
\]
\end{theorem}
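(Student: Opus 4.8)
The plan is to prove Theorem~\ref{teoremamulti 2} by iterating the one-variable interpolation result Theorem~\ref{int op conos}, starting from the family of bounds \eqref{voltaraser} for the $m$-linear maps $T_k$ given in Proposition~\ref{teoremamulti}. Fix $P\in\mathcal P(^m\mathbb C^n)$ and recall that for each $1\le k\le m-1$ the $m$-linear map defined by \eqref{niza} satisfies
\[
\|T_k:\underbrace{\ell_{q,\infty}^n\times\dots\times\ell_{q,\infty}^n}_{k-1}\times\ell_{q,1}^n\times\underbrace{(\ell_{q,\infty}^n)^d\times\dots\times(\ell_{q,\infty}^n)^d}_{m-k}\to\ell_1(\mathcal J(m,n))\|\le C_{m,r}\|P\|_{\mathcal P(^m\ell_r^n)}.
\]
The strategy is: first freeze all variables but one, apply Theorem~\ref{int op conos} to the resulting linear operator between $\ell_{q,1}$ and $\ell_{q,\infty}$ (the cone condition is automatic because $\ell_{q,1}\subset\ell_{q,s}$ sits inside the cone when the remaining variables are non-increasing, and one uses the identification $(\ell_{q,1},\ell_{q,\infty})_{\theta,s}=\ell_{q,s}$ with $1/s$ matching $\theta$ appropriately), thereby upgrading the $\ell_{q,1}$-slot of $T_k$ to an $\ell_{q,s_k}$-slot at the cost of downgrading the other $\ell_{q,\infty}$ slots. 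Then one multiplies/combines the resulting estimates over $k=1,\dots,m-1$: since each $T_k$ controls the sum with the $k$-th variable in the "good" space $\ell_{q,1}$ and all others in $\ell_{q,\infty}$, an interpolation in each coordinate successively (a telescoping or geometric-mean argument over the $m-1$ non-increasing variables) produces the symmetric estimate with every one of the first $m-1$ variables in $\ell_{q,s}$ and the last in $(\ell_{q,\infty}^n)^d$. The value of $s$ — $2$ for $m=3$, $\tfrac{3+\sqrt5}{2}$ for $m=4$, $\tfrac{m}{\log m}$ for $m\ge5$ — will emerge from optimizing the interpolation parameters $\theta_k$ subject to the constraint that the total "loss" from downgrading $\ell_{q,1}\to\ell_{q,s}$ in $m-1$ slots is balanced against the "gain" from the $m-1$ maps $T_1,\dots,T_{m-1}$.

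More concretely, I would set this up as follows. For a fixed $1\le k\le m-1$, view $T_k$ as a linear operator in its $k$-th argument with the other $m-1$ arguments held as fixed unit vectors of $\ell_{q,\infty}^n$ (those in positions $>k$ taken non-increasing). The two endpoint bounds are: the trivial one $\|T_k(\cdot):\ell_{q,\infty}^n\to\ell_1\|\le$ (something), obtained by re-indexing — i.e. the map $T_{k'}$ for another index — and the good one from Proposition~\ref{teoremamulti}, $\|T_k(\cdot):\ell_{q,1}^n\to\ell_1\|\le C_{m,r}\|P\|$. Real interpolation with parameter $\theta$ and second index $s$ gives, via $(\ell_{q,1},\ell_{q,\infty})_{\theta,s}=\ell_{q,s}$ for a suitable relation between $\theta$ and the other parameters, a bound
\[
\|T_k(\cdot):(\ell_{q,s}^n)^d\to\ell_1\|\le C_{m,r}\|P\|^{1-\theta}\cdot(\text{other endpoint})^{\theta}.
\]
The cone restriction is exactly what Theorem~\ref{int op conos} is built to handle: the interpolated spaces are $(\ell_{q,1}^d,\ell_{q,\infty}^d)_{\theta,s}$, which by the Cerdà–Martín–Milman description coincides with $(\ell_{q,s}^n)^d$ up to equivalence of quasi-norms. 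Running this coordinate by coordinate — first lift variable $1$, then variable $2$ with the updated norm in slot $1$, and so on — and keeping careful track of which maps $T_k$ are invoked at each stage, produces the claimed $m$-linear estimate. The constants multiply but stay independent of $n$, which is all we need.

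The main obstacle I anticipate is twofold. First, bookkeeping: at each coordinate lift one needs a genuine pair of endpoint estimates for the relevant linear slice, and producing the "other endpoint" (the one that is NOT coming from $T_k$ with the $\ell_{q,1}$ in position $k$) requires invoking a different $T_{k'}$ or a crude Hölder bound, and one must verify the interpolation couples are compatible (same ambient space, correct density/cone structure). Getting the combinatorics of which $\theta_k$ to choose so that the final second-index is exactly $s$ — and checking that the resulting $s$ is the stated closed-form value, including the quadratic that yields $\tfrac{3+\sqrt5}{2}$ for $m=4$ — is the delicate computational heart. Second, one must confirm that the hypotheses of Theorem~\ref{int op conos} (quasi-Banach function lattices, the $K$-functional description of $(X_0^d,X_1^d)_{\theta,a}$) genuinely apply to the finite-dimensional truncations $\ell_{q,s}^n$ uniformly in $n$; this is where citing \cite{cerda1996interpolation} precisely and noting that all constants there are dimension-free becomes essential. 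Once these points are settled, Remark~\ref{manel} immediately converts the $m$-linear bound into the inclusion $\ell_{q,s}\subset\mon\mathcal P(^m\ell_r)$, which is Theorem~\ref{teo: main poli}.
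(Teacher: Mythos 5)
Your proposal does not address the statement you were asked to prove. The statement is Theorem~\ref{int op conos} itself: the abstract assertion that a linear operator $S$ which is bounded only on the cones, $\| S : X_0^d \to Y_0\| \le M_0$ and $\| S : X_1^d \to Y_1\| \le M_1$, extends to a bounded map $(X_0^d,X_1^d)_{\theta,a} \to (Y_0,Y_1)_{\theta,a}$ with norm at most $M_0^{1-\theta}M_1^{\theta}$. Your text instead takes this theorem as a black box ("apply Theorem~\ref{int op conos} to the resulting linear operator\dots") and sketches how to iterate it to obtain Theorem~\ref{teoremamulti 2} and hence Theorem~\ref{teo: main poli} --- that is, you outline the downstream application (essentially the content of Lemma~\ref{induction lemma} and the proof of Theorem~\ref{teoremamulti 2} in the paper), but you give no argument whatsoever for the interpolation theorem on cones itself. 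Nothing in your proposal explains why boundedness on the cones alone suffices, what the quasi-norm of $(X_0^d,X_1^d)_{\theta,a}$ is, or where the geometric-mean bound $M_0^{1-\theta}M_1^{\theta}$ comes from. That is the entire content of the statement, so the gap is total.

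For comparison, the paper does not prove this result from scratch either: it is quoted as a direct corollary of \cite[Theorem~1--(b)]{cerda1996interpolation}, i.e.\ of the $K$-method for operators on the cone of non-increasing functions. If you wanted a self-contained argument, the route would be the standard one adapted to cones: for $x$ in the cone couple and any decomposition $x = x_0 + x_1$ with $x_i \in X_i^d$, linearity of $S$ and the cone bounds give
\[
K\bigl(t, Sx; Y_0, Y_1\bigr) \;\le\; M_0 \|x_0\|_{X_0} + t\, M_1 \|x_1\|_{X_1},
\]
and taking the infimum over such \emph{cone} decompositions (which is exactly how the $K$-functional of the couple $(X_0^d,X_1^d)$ is defined in \cite{cerda1996interpolation}) yields $K(t,Sx;Y_0,Y_1) \le M_0\, K\bigl(t M_1/M_0, x; X_0^d,X_1^d\bigr)$; inserting this into the $(\theta,a)$-functional then produces the factor $M_0^{1-\theta}M_1^{\theta}$ by the usual change of variables. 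The delicate point, which your proposal never confronts, is precisely that only cone decompositions are available, which is why one must invoke the Cerdà--Mart\'in framework rather than classical real interpolation.
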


We are going to apply this to Lorentz sequence spaces. In this case, it was proved in  \cite{sagher1972application} (see also \cite[Theorem~4]{cerda1996interpolation}) that
\[
(\ell^d_{q,p_0},\ell^d_{q,p_1} )_{\theta,a} = (\ell_{q,p_0},\ell_{q,p_1} )_{\theta,a}^{d} \,.
\]
On the other hand, it is known (see for example \cite[Theorem~5.3.1]{BerLof76}) that  whenever $\frac{1}{p} = \frac{1-\theta}{p_0}+\frac{\theta}{p_1}$ we have
\[
(\ell_{q,p_0},\ell_{q,p_1} )_{\theta,p} = \ell_{q,p},
\]
and therefore
\begin{equation}\label{int lorentz conos}
(\ell^d_{q,p_0},\ell^d_{q,p_1} )_{\theta,p} = \ell^d_{q,p}.
\end{equation}
Finally \cite[Theorem~3.7.1]{BerLof76} gives that (if $p_{0}, p_{1}, p$ are related as before)
\begin{equation}\label{int lorentz dual}
(\ell_{q,p_0}',\ell_{q,p_1}' )_{\theta,p} = (\ell_{q,p_0},\ell_{q,p_1} )_{\theta,p}'   = \ell_{q,p}' \,.
\end{equation}

The idea now is to use Theorem~\ref{int op conos} to interpolate multilinear mappings. Let us explain how we are going to do this. Let $X_{1}, \ldots , X_{m}$ be Banach function lattices (in our case they will always be finite dimensional Lorentz spaces), $Y$ some Banach space ($\ell_{1}(\mathcal{J}(m,n)$ for us) and some continuous $m$-linear $T: X_{1} \times \cdots \times X_{m} \to Y$ (for us given by \eqref{niza}). Now we fix $1 \leq j \neq k \leq m$ and, for each $i \neq j,k$ pick $z^{(i)} \in X_{i}$ and $\varphi \in Y'$ and consider $v= (z^{(1)} , \ldots , z^{(m)}, \varphi)$. Now we define
\begin{equation} \label{carminho}
T_{v} : X_{j} \to X_{k}' \quad \text{ by } \quad \big(T_v(z^{(j)}) \big)(z^{(k)}) = \varphi (T (z^{(1)} , \ldots , z^{(m)})).
\end{equation}
An easy computation shows that
\begin{equation} \label{apalachian}
\Vert T_{v} \Vert \leq \Vert \varphi \Vert \, \Vert T \Vert \prod_{i \neq j,k} \Vert z^{(i)} \Vert  \,.
\end{equation}
Observe that in this procedure we may consider $X_{i}^{d}$ for every $i$ except for $i=k$, getting the same estimate for the norm (defining the ``norm'' for multilinear mappings on cones with the same idea as in \eqref{copland}). We are now ready to present the main technical tool for the proof of Theorem~\ref{teoremamulti 2}.

\begin{lemma}\label{induction lemma}
Let $m \ge 3$, $1 < r \leq 2$, define $q:=(mr')'$ and let $C_{m,r}$ be the constant from Proposition~\ref{teoremamulti}. For each  $0 < \theta < 1$, every $P \in \mathcal P(^m \mathbb{C}^n)$ and all $1 \le k \le m-2$ the $m$-linear mapping
\[
T^k(\theta): \big(\ell_{q, ( \frac{1}{1-\theta} )^{k}}^n \big)^d \times 
\underbrace{\big(\ell_{q,\frac{1}{\theta}}^n\big)^d \times \cdots \times \big(\ell_{q,\frac{1}{\theta}}^n\big)^d}_{k} \times 
\underbrace{\big(\ell_{q,\infty}^n\big)^d \times \cdots \times \big(\ell_{q,\infty}^n\big)^d}_{m-k-1}  
\to \ell_1(\mathcal J (m,n))
\]
given by \eqref{niza} satisfies
\[
\Vert T^{k} (\theta) \Vert \leq C_{m,r} \|P\|_{\mathcal P(^m\ell_r^n)}.
\]
\end{lemma}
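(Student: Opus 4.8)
The plan is to prove Lemma~\ref{induction lemma} by induction on $k$, using Proposition~\ref{teoremamulti} as the base case $k=1$ and Theorem~\ref{int op conos} (interpolation on cones) for the inductive step. For the base case, observe that $T^1(\theta)$ has the form
\[
\big(\ell_{q,\frac{1}{1-\theta}}^n\big)^d \times \big(\ell_{q,\frac{1}{\theta}}^n\big)^d \times \big(\ell_{q,\infty}^n\big)^d \times \cdots \times \big(\ell_{q,\infty}^n\big)^d \to \ell_1(\mathcal J(m,n)),
\]
and this should be obtained by interpolating the two extreme mappings provided by Proposition~\ref{teoremamulti}: one with the $\ell_{q,1}$-norm sitting in the first variable (that is, $T_1$ in \eqref{1079}) and one with the $\ell_{q,1}$-norm in the second variable ($T_2$), both of which have norm bounded by $C_{m,r}\|P\|_{\mathcal P(^m\ell_r^n)}$. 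Fixing all variables except the first two, passing to the bilinear-to-linear reformulation \eqref{carminho}--\eqref{apalachian}, and interpolating between $\ell_{q,1}^d \to \ell_{q,\infty}'$ and $\ell_{q,\infty}^d \to \ell_{q,1}'$ with parameter $\theta$ yields, via \eqref{int lorentz conos} and \eqref{int lorentz dual}, an operator $\ell_{q,\frac{1}{1-\theta}}^d \to \ell_{q,\frac{1}{\theta}}'$ of the required norm; undoing the reformulation gives $T^1(\theta)$.

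For the inductive step, suppose the statement holds for some $k$ with $1 \le k \le m-3$, so that $T^k(\theta)$ has norm $\le C_{m,r}\|P\|$ for all $\theta$. I would produce $T^{k+1}(\theta)$ by interpolating two instances of the inductive hypothesis against each other. The point is that in $T^k(\theta)$ the first variable carries an $\ell_{q,(\frac{1}{1-\theta})^k}$-norm while the variables in positions $2,\dots,k+1$ carry $\ell_{q,\frac{1}{\theta}}$-norms; I want to trade the $\ell_{q,\infty}$-norm in position $k+2$ for a finite Lorentz exponent at the cost of slightly weakening the first variable's exponent. Concretely, fix all variables except the first one and the one in position $k+2$, reformulate as a linear operator from the first variable's space into the dual of the $(k+2)$-nd variable's space using \eqref{carminho}, and interpolate. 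On one side the inductive hypothesis (applied with the roles of the first and $(k+2)$-nd slots arranged so that $\ell_{q,1}$ sits in position $k+2$ — this is again just Proposition~\ref{teoremamulti} composed with the already-established interpolations) gives a bound into $\ell_{q,\infty}'$; on the other side $T^k(\theta)$ itself gives a bound from $\ell_{q,(\frac{1}{1-\theta})^k}^d$ into $\ell_{q,\infty}' $. Interpolating these with a fresh parameter and using $(\ell^d_{q,p_0},\ell^d_{q,p_1})_{\theta,p} = \ell^d_{q,p}$ produces the exponent $(\frac{1}{1-\theta})^{k+1}$ in the first slot and $\frac{1}{\theta}$ in slot $k+2$, which is exactly the shape of $T^{k+1}(\theta)$; undoing the reformulation for all choices of the fixed variables and using \eqref{apalachian} to control the constant completes the step. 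Throughout, the norm bound is preserved because $M_0^{1-\theta}M_1^\theta \le C_{m,r}\|P\|$ whenever $M_0, M_1 \le C_{m,r}\|P\|$.

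The bookkeeping that needs care, and the main obstacle, is tracking exactly which Lorentz exponents appear in which slots after each interpolation and verifying that the reciprocal-exponent identity $\frac{1}{p} = \frac{1-\eta}{p_0} + \frac{\eta}{p_1}$ used in \eqref{int lorentz conos} and \eqref{int lorentz dual} produces $(\frac{1}{1-\theta})^{k}$ and $\frac{1}{\theta}$ at each stage with a consistent choice of the interpolation parameter $\eta$ (which will generally differ from $\theta$ and will depend on $k$). One must also be careful that the cone restriction is imposed on all slots except the one playing the role of ``$X_k$'' in \eqref{carminho} at each stage, and check that when we eventually set all variables equal in Remark~\ref{manel} the cone condition is harmless since a decreasing sequence equals its own rearrangement. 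A secondary subtlety is that the spaces $\ell_{q,s}$ for $s>q$ are only quasi-Banach, so one should invoke the quasi-Banach versions of the interpolation results (which is why Theorem~\ref{int op conos} is stated for quasi-Banach function lattices); this affects constants but not the structure of the argument. Finally, once Lemma~\ref{induction lemma} is in hand, Theorem~\ref{teoremamulti 2} follows by choosing, for the given $m$, the optimal $\theta$: one takes $k = m-2$ in the lemma, so the first variable sits in $\ell_{q,(\frac{1}{1-\theta})^{m-2}}^d$ and variables $2,\dots,m-1$ sit in $\ell_{q,\frac{1}{\theta}}^d$, then optimizes $\theta$ to make $\max\{(\frac{1}{1-\theta})^{m-2}, \frac{1}{\theta}\}$ as small as possible (after first symmetrizing the exponents, if needed, by one more interpolation among the first $m-1$ slots), which yields $s=2$ for $m=3$, $s=\frac{3+\sqrt5}{2}$ for $m=4$, and the asymptotic value $s=\frac{m}{\log m}$ for $m\ge 5$.
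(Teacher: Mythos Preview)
Your plan is the paper's: induction on $k$, base case by interpolating $T_1$ and $T_2$ via the device \eqref{carminho}, inductive step by interpolating the hypothesis against the raw mapping $T_{k+1}$ from Proposition~\ref{teoremamulti}. Two points need correcting.

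First, the interpolation parameter is not ``fresh'' and does not depend on $k$. In the paper's orientation one takes slot $k+1$ as the \emph{domain} and the dual of slot $1$ as the \emph{codomain}; then the parameter is exactly $\theta$ at every stage, and the identity $\tfrac{1-\theta}{(1/(1-\theta))^{k-1}} + \tfrac{\theta}{\infty} = (1-\theta)^k$ on the codomain side is what makes the first-slot exponent grow geometrically as $(\tfrac{1}{1-\theta})^k$. Your sentence claiming that both sides map ``into $\ell_{q,\infty}'$'' is also garbled: one side maps $(\ell_{q,\infty}^n)^d\to(\ell_{q,(1/(1-\theta))^{k-1}}^n)'$ and the other $(\ell_{q,1}^n)^d\to(\ell_{q,\infty}^n)'$.

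Second, and this is the real gap, your reversed orientation (domain $=$ slot $1$, codomain $=$ dual of slot $k+2$) collides with the cone. In $T^k(\theta)$ the $(k+2)$-nd slot is $(\ell_{q,\infty}^n)^d$, so \eqref{apalachian} only controls $T_v(z^{(1)})$ evaluated on \emph{decreasing} $z^{(k+2)}$; you cannot conclude $T_v(z^{(1)})\in(\ell_{q,\infty}^n)'$ with the stated bound, and hence cannot feed it into \eqref{int lorentz dual}. You yourself flag that ``the cone restriction is imposed on all slots except the one playing the role of $X_k$ in \eqref{carminho}'', but then choose for $X_k$ a slot that \emph{does} carry the cone. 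The paper sidesteps this by proving throughout the induction a slightly stronger statement in which slot $1$ carries \emph{no} cone (note $T^{k-1}(\theta):\ell_{q,(1/(1-\theta))^{k-1}}^n\times\cdots$ in the proof, with no superscript $d$), and always dualises that slot. Reversing your orientation to match the paper's fixes both issues at once.
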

\begin{proof}
We proceed by induction on $k$ and begin with the case $k=1$. We consider the mappings (see \eqref{1079})
\begin{gather*}
T_1:  \ell_{q,1}^{n} \times \underbrace{(\ell_{q,\infty}^{n})^d \times (\ell_{q,\infty}^{n})^d \times\dots \times  (\ell_{q,\infty}^{n})^d}_{m-1} \to \ell_1(\mathcal J (m,n)) \\
T_2: \ell_{q,\infty}^{n} \times \ell_{q,1}^{n} \times \underbrace{(\ell_{q,\infty}^{n})^d \times \dots \times  (\ell_{q,\infty}^{n})^d}_{m-2} \to \ell_1(\mathcal J (m,n)).
\end{gather*}
We fix $z^{(3)} , \ldots , z^{(m)} \in (\ell_{\infty}^n)^d$ and $\varphi \in \Big( \ell_1(\Jj (m,n)) \Big)'$ and writing $v = (z^{(3)} , \ldots , z^{(m)}, \varphi)$ define, following \eqref{carminho}, two linear operators
\[
(T_1)_v : \big(\ell_{q,\infty}^n \big)^d  \to \big( \ell_{q,1}^{n} \big)' \quad \text{ and } \quad
(T_2)_v : \big(\ell_{q,1}^n \big)^d  \to \big( \ell_{q,\infty}^{n} \big)' 
\]
that, by \eqref{voltaraser} and \eqref{apalachian}, satisfy (for $i=1,2$)
\[
\| (T_i)_v \| \le C_{m,r} \Vert P \Vert_{\mathcal{P}(^{m} \ell_{r}^{n})} \| z^{(3)} \|_{\ell_{q,\infty}}  \cdots \| z^{(m)} \|_{\ell_{q,\infty}} \| \varphi \|_{\ell_1(\Jj (m,n))'} \,.
\]
Now we interpolate, using Theorem~\ref{int op conos} and equations \eqref{int lorentz conos} and \eqref{int lorentz dual}, to have
\[
\Big\Vert \big( T^1(\theta) \big)_v : \big( \ell_{q,\frac{1}{\theta}}^{n} \big)^d \to \big( \ell_{q,\frac{1}{1-\theta}}^{n} \big)' \Big\Vert 
\le C_{m,r} \Vert P \Vert_{\mathcal{P}(^{m} \ell_{r}^{n})} \| z^{(3)} \|_{\ell_{q,\infty}}  \cdots \| z^{(m)} \|_{\ell_{q,\infty}} \| \varphi \|_{\ell_1(\Jj (m,n))'}
\]
for every $0 < \theta < 1$. This immediately gives (just taking supremums)
\[
\big\Vert T^{1}(\theta): \ell_{q,\frac{1}{1-\theta}}^{n} 
\times \big( \ell_{q,\frac{1}{\theta}}^{n} \big)^d \times  
\underbrace{\big(\ell_{q,\infty}^{n} \big)^d \times \dots \times \big( \ell_{q,\infty}^{n} \big)^d}_{m-2} \to \ell_1(\mathcal J (m,n)) \big\Vert 
\le C_{m,r} \Vert P \Vert_{\mathcal{P}(^{m} \ell_{r}^{n})} \,.
\]
Now let us assume that, for $1 \leq k \leq m-2$, 
\[
T^{k-1}(\theta): \ell_{q, \left( \frac{1}{1-\theta} \right)^{k-1}}^n \times \underbrace{(\ell_{q,\frac{1}{\theta}}^n)^d \times \dots \times (\ell_{q,\frac{1}{\theta}}^n)^d}_{k-1} \times \underbrace{(\ell_{q,\infty}^n)^d \times\dots \times (\ell_{q,\infty}^n)^d}_{m-k}  \to \ell_1(\mathcal J (m,n))
\]
has norm $\le C_{m,r} \Vert P \Vert_{\mathcal{P}(^{m} \ell_{r}^{n})} $. On the other hand consider the mapping defined by Theorem~\ref{teoremamulti} (see \eqref{1079})
\[
T_{k+1} :   \underbrace{\ell_{q,\infty}^n \times \dots \times \ell_{q,\infty}^n}_{k} \times \ell_{q,1}^n \times \underbrace{(\ell_{q,\infty}^n)^d \times \dots \times  (\ell_{q,\infty}^n)^d}_{m-k-1} \to \ell_1(\mathcal J (m,n))
\]
that (recall \eqref{apalachian}) also has norm $\le C_{m,r} \Vert P \Vert_{\mathcal{P}(^{m} \ell_{r}^{n})} $. Since $\Vert \ell_{q, \frac{1}{\theta}}^n \hookrightarrow \ell_{q,\infty}^n \Vert =1$ we have (recall \eqref{copland})
\[
T_{k+1} : \ell_{q,\infty}^n  \times \underbrace{(\ell_{q, \frac{1}{\theta} }^n)^d \times \dots \times (\ell_{q, \frac{1}{\theta} }^n)^d}_{k-1} \times \ell_{q,1} \times \underbrace{(\ell_{q,\infty}^n)^d \times \dots \times  (\ell_{q,\infty}^n)^d}_{m-k-1} \to \ell_1(\mathcal J (m,n))
\]
has again norm bounded by $C_{m,r} \Vert P \Vert_{\mathcal{P}(^{m} \ell_{r}^{n})} $. We fix $\varphi \in \big( \ell_1(\Jj (m,n)) \big)'$ and $z^{(i)} \in (\mathbb{C}^n)^d$ for $i \neq 1,k$ and, taking $v=( z^{(2)}, \ldots, z^{(k)}, z^{(k+2)}, \ldots , z^{(m)}, \varphi)$ we have, by \eqref{carminho} and \eqref{apalachian}
\begin{multline*}
\big\Vert (T^{k-1}(\theta))_v  : (\ell_{q,\infty}^n)^d  \to \big( \ell_{q, ( \frac{1}{1-\theta} )^{k-1}}^n \big)' \big\Vert \\
 \leq C_{m,r} \Vert P \Vert_{\mathcal{P}(^{m} \ell_{r}^{n})}\| \varphi \|_{\ell_1(\Jj (m,n))'}  
\| z^{(2)} \|_{\ell_{q,\frac{1}{\theta}}} \cdots \| z^{(k)} \|_{\ell_{q,\frac{1}{\theta}}}\| z^{(k+2)} \|_{\ell_{q,\infty}} \cdots \| z^{(m)} \|_{\ell_{q,\infty}} 
\end{multline*}
and
\begin{multline*}
\big\Vert  ({T_{k+1}})_v  : (\ell_{q,1}^n)^d  \to \big( \ell_{q,\infty}^n \big)' \big\Vert \\
 \leq C_{m,r} \Vert P \Vert_{\mathcal{P}(^{m} \ell_{r}^{n})}\| \varphi \|_{\ell_1(\Jj (m,n))'}  
\| z^{(2)} \|_{\ell_{q,\frac{1}{\theta}}} \cdots \| z^{(k)} \|_{\ell_{q,\frac{1}{\theta}}}\| z^{(k+2)} \|_{\ell_{q,\infty}} \cdots \| z^{(m)} \|_{\ell_{q,\infty}} \,.
\end{multline*}
Once again, we may interpolate using Theorem~\ref{int op conos}, \eqref{int lorentz conos} and \eqref{int lorentz dual} to have
\begin{multline*}
\big\Vert  ( T^{k}(\theta)  v : (\ell_{q,\frac{1}{\theta}}^n)^d \to \big( \ell_{q,\frac{1}{(1-\theta)^k}}^n \big)' \big\Vert \\
\leq C_{m,r} \Vert P \Vert_{\mathcal{P}(^{m} \ell_{r}^{n})}\| \varphi \|_{\ell_1(\Jj (m,n))'}  
\| z^{(2)} \|_{\ell_{q,\frac{1}{\theta}}} \cdots \| z^{(k)} \|_{\ell_{q,\frac{1}{\theta}}}\| z^{(k+2)} \|_{\ell_{q,\infty}} \cdots \| z^{(m)} \|_{\ell_{q,\infty}}
\end{multline*}
for every $0 < \theta < 1$. Taking supremum as before this gives
\[
\big\Vert T^{k}(\theta) :  \ell_{q, \left( \frac{1}{1-\theta} \right)^{k}}^n \times \underbrace{(\ell_{q,\frac{1}{\theta}}^n)^d \times \dots \times (\ell_{q,\frac{1}{\theta}}^n)^d}_{k} \times \underbrace{(\ell_{q,\infty}^n)^d \times\dots \times (\ell_{q,\infty}^n)^d}_{m-k-1}  \to \ell_1(\mathcal J (m,n)) \big\Vert 
\leq C_{m,r} \Vert P \Vert_{\mathcal{P}(^{m} \ell_{r}^{n})} \,. \qedhere
\]
\end{proof}

\begin{proof}[Proof of Theorem~\ref{teoremamulti 2}.]
For $m\ge 5$, we choose $\theta = \frac{\log(m+\frac{3}{2})}{m-1+\log(m+\frac{3}{2})}$.
Then $\frac{1}{\theta}\ge \frac{m}{\log(m)}$ and
\[
\left( \frac{1}{1-\theta} \right)^{k} 
	 = \Big( 1 + \frac{\log(m+\frac{3}{2})}{m-1} \Big)^{m-2}\\
	 \ge \frac{m}{\log{m}}.
\]
Therefore $\Vert \ell_{q, \left( \frac{1}{1-\theta} \right)^{k}}^{n} \hookrightarrow \ell_{q,\frac{m}{\log(m)}}^{n} \Vert = \Vert \ell_{q,  \frac{1}{\theta} }^{n} \hookrightarrow \ell_{q,\frac{m}{\log(m)}}^{n} \| = 1$. Using Lemma~\ref{induction lemma} with $k = m-2$ the result follows.
For $m=3$ and $m=4$ just take $\theta =\frac12$ and $\theta =\frac{3}2-\frac{\sqrt{5}}{2}$ in Lemma~\ref{induction lemma}, respectively.
\end{proof}

We finish this section with some comments on the hypercontractivity  of the inclusion of $\ell_{q,s}$ in $\mon \Pp(^m \ell_r)$. For the $\ell_\infty$ case it is known (see \cite[Theorem~2.1]{bayart2017multipliers}) that the inclusion $\ell_{\frac{2m}{m-1},\infty}$ in $\mon \Pp(^m \ell_\infty)$ is hypercontractive in the sense that there exists a constant $C>0$ such for every $P\in \Pp(^m \ell_\infty)$, 
\begin{equation*}
    \sum_{\mathbf j \in \mathcal J(m,n)} \vert c_{\mathbf j}(P) z_{\mathbf j}  \vert \leq C^m \Vert z \Vert_{\ell_{\frac{2m}{m-1},\infty}}^m \Vert P \Vert_{P(^m \ell_\infty)}.
\end{equation*}

For $1<r\le 2$, although we do not know if $\ell_{q,\infty}$ lies in the set $\mon \Pp(^m \ell_r)$ it is easy to see that we cannot expect to have a hypercontractive inequality as above. 

\begin{remark}
Proceeding as in the proof of the upper inclusion in Theorem~\ref{thm: main theorem hb} (see  \eqref{desigBayart}) with $m=\lfloor \log(n+1)\rfloor$  we would have that  $$ \frac{1}{\Vert z \Vert_{\ell_{q,\log m}}\log(n+1)^{1-\frac{1}{r}}} \sum_{j = 1}^n |z_j^*| $$ is bounded independently of $n$ for every $z\in \ell_{q,\log m}$. Take now $z=(j^{-1/q}\log(j)^{-2/\log(m)})_j$. Then $\|z\|_{\ell_{q,\log m}}\le \left(\sum_{j=1}^\infty \frac1{j\log^2(j)}\right)^{\frac1{\log m}}.$ But,
\begin{multline*}
 \frac{1}{\Vert z \Vert_{\ell_{q,\log m}}\log(n+1)^{1-\frac{1}{r}}} \sum_{j = 1}^n |z_j^*|  \gg  \frac1{\log(n+1)^{1-\frac{1}{r}}} \sum_{j = 1}^n \frac1{j^{1/q}\log(j)^{\frac2{\log{m}}}}  \\  \gg  \frac{e^2}{c\log(n+1)^{1-\frac{1}{r}}}\sum_{j = 1}^n \frac1{j^{1/q}} 
 \ge \frac{e^2}{c\log(n+1)^{1-\frac{1}{r}}} n^{1/q'}q'.
 \end{multline*}
 Since $q'=mr'=\lfloor \log(n+1)\rfloor r'$, the last expression is $\gg \log(n)^{\frac1{r}}$.
This shows that there exists no constant $C>0$ such that for every $n$ and $m$ and all $P\in\Pp(^m \mathbb{C}^{n})$ we have
\begin{equation*}
    \sum_{\mathbf j \in \mathcal J(m,n)} \vert c_{\mathbf j}(P) z_{\mathbf j}  \vert \leq C^m \Vert z \Vert_{\ell_{q,\log m}}^m \Vert P \Vert_{\Pp(^m \ell_r^n)}.
\end{equation*}
\end{remark} 

On the other hand, applying carefully   the ideas developed in this section, it is possible to obtain hypercontractive inequalities in some cases.

\begin{remark}\label{rem: hyper}
Given $\varepsilon>0$, there exists a constant $C>0$ such that for every $m\ge 3$, $n\in \mathbb N$ and every $P \in \mathcal P(^m \mathbb C^n)$
 \[
     \sum_{\jj \in \Jj(m,n)}  \vert c_{\mathbf j}(P) z_{\jj} \vert \le C(1+\varepsilon)^m \Vert P \Vert_{\mathcal P(^m\ell_r^n)} \Vert z \Vert_{\ell_{q,2}^n}^{m}.
\]
To see this fix $1<r\le 2$, $m\ge 3$, and take $z$, $z^{(m-2)}$, $z^{(m-1)}$, $w\in \mathbb C^n$ such that 
 $z^{(m-1)}=z^{(m-1)*}$ and $w=w^*$. Then we have, using Lemma~\ref{lem: BDS} (see also \eqref{BDS modif}) and Lemma~\ref{lem: alpha(j)},
\begin{align*}
\sum_{\jj \in \Jj(m,n)} & | c_\jj(P) z_{j_1} \dots z_{j_{m-3}}z^{(m-2)}_{j_{m-2}}z^{(m-1)}_{j_{m-1}}w_{j_m} | 
\\    &  \le em \Vert P \Vert_{\mathcal P(^m\ell_r^n)} \sum_{\jj \in \Jj(m-1,n)} \vert  z_{j_1} \dots z_{j_{m-3}}z^{(m-2)}_{j_{m-2}}z^{(m-1)}_{j_{m-1}} \vert \cdot \Big(\frac{(m-1)^{m-1}}{\alpha(\mathbf j)^{\alpha(\mathbf j)}}\Big)^{1/r} \big(\sum_{j_m=j_{m-1}}^n w_{j_m}^r \big)^{1/r} \\
	& \le em^{3} Cm^{e^{r'-1}} \Vert P \Vert_{\mathcal P(^m\ell_r^n)} \sum_{j_{m-2}=1}^n|z^{(m-2)}_{j_{m-2}}| \Big(\sum_{\jj\in \Jj(m-3,j_{m-2})} |\jj| \vert  z_{\jj} \vert  \Big)\sum_{j_{m-1}=j_{m-2}}^n|z^{(m-1)}_{j_{m-1}} | \big(\sum_{j_m=j_{m-1}}^n w_{j_m}^r \big)^{1/r} \\
	& \le Cm^{e^{r'}}\|w\|_{\ell_{q,\infty}} \Vert P \Vert_{\mathcal P(^m\ell_r^n)} \sum_{j_{m-2}=1}^n|z^{(m-2)}_{j_{m-2}}| \Big(\sum_{l=1}^{j_{m-2}} \vert  z_{l} \vert \Big)^{m-3}\sum_{j_{m-1}=j_{m-2}}^n|z^{(m-1)}_{j_{m-1}} |j_{m-1}^{\frac1{r}-\frac1{q}}   \\
		& \le Cm^{e^{r'}}\|w\|_{\ell_{q,\infty}} \Vert P \Vert_{\mathcal P(^m\ell_r^n)} \sum_{j_{m-2}=1}^n|z^{(m-2)}_{j_{m-2}}| \Big(({j_{m-2}})^{1-\frac1{q}} \Vert  z \Vert_{\ell_{q,\infty}} \Big)^{m-3}\|z^{(m-1)}\|_{\ell_{q,\infty}}(r'+1)j_{m-2}^{\frac2{q'}-\frac1{r'}}   \\
		&  \le (r'+1)Cm^{e^{r'}}\|w\|_{\ell_{q,\infty}}\|z\|_{\ell_{q,\infty}}^{m-3}\|z^{(m-2)}\|_{\ell_{q,1}}\|z^{(m-1)} \|_{\ell_{q,\infty}} \Vert P \Vert_{\mathcal P(^m\ell_r^n)},
		\end{align*}
where in the penultimate inequality we used the bound of the identity from $\ell_1^k$ to $\ell_{q,\infty}^k$ that may be found for example in \cite[Lemma~22]{defant2006norms}.
On the other hand, we also have,
\begin{align*}
\sum_{\jj \in \Jj(m,n)} & | c_\jj(P) z_{j_1} \dots z_{j_{m-3}}z^{(m-2)}_{j_{m-2}}z^{(m-1)}_{j_{m-1}}w_{j_m} | 
\\    &  \le em \Vert P \Vert_{\mathcal P(^m\ell_r^n)} \sum_{\jj \in \Jj(m-1,n)} \vert  z_{j_1} \dots z_{j_{m-3}}z^{(m-2)}_{j_{m-2}}z^{(m-1)}_{j_{m-1}} \vert \cdot \Big(\frac{(m-1)^{m-1}}{\alpha(\mathbf j)^{\alpha(\mathbf j)}}\Big)^{1/r} \big(\sum_{j_m=j_{m-1}}^n w_{j_m}^r \big)^{1/r} \\
	& \le em^{3} Cm^{e^{r'-1}} \Vert P \Vert_{\mathcal P(^m\ell_r^n)} \sum_{j_{m-1}=1}^n|z^{(m-1)}_{j_{m-1}}| \Big(\sum_{\jj\in \Jj(m-3,j_{m-2})}|\jj| \vert  z_{\jj} \vert  \Big)\sum_{j_{m-2}=1}^{j_{m-1}}|z^{(m-2)}_{j_{m-2}} | \big(\sum_{j_m=j_{m-1}}^n w_{j_m}^r \big)^{1/r} \\
	& \le Cm^{e^{r'}}\|w\|_{\ell_{q,\infty}} \Vert P \Vert_{\mathcal P(^m\ell_r^n)} \sum_{j_{m-1}=1}^n|z^{(m-1)}_{j_{m-1}}| \Big(\sum_{l=1}^{j_{m-2}} \vert  z_{l} \vert \Big)^{m-3}\sum_{j_{m-2}=1}^{j_{m-1}}|z^{(m-2)}_{j_{m-2}} |j_{m-1}^{\frac1{r}-\frac1{q}}   \\
	& \le Cm^{e^{r'}}\|w\|_{\ell_{q,\infty}} \Vert P \Vert_{\mathcal P(^m\ell_r^n)} \sum_{j_{m-1}=1}^n|z^{(m-1)}_{j_{m-1}}| \Big(({j_{m-2}})^{1-\frac1{q}} \Vert  z \Vert_{\ell_{q,\infty}} \Big)^{m-3}j_{m-1}^{1-\frac1{q}}\|z^{(m-2)}\|_{\ell_{q,\infty}}j_{m-1}^{\frac1{r}-\frac1{q}}   \\
		&  =Cm^{e^{r'}}\|w\|_{\ell_{q,\infty}}\|z\|_{\ell_{q,\infty}}^{m-3}\|z^{(m-2)}\|_{\ell_{q,\infty}}\|z^{(m-1)}
\|_{\ell_{q,1}} \Vert P \Vert_{\mathcal P(^m\ell_r^n)}.
\end{align*}
Thus, proceeding as in Lemma~\ref{induction lemma} we may construct an operator
which is bounded from  $\ell_{q,\infty}^d$ to $\big(\ell_{q,1}\big)'$ and also from  $\ell_{q,1}^d$ to $\big(\ell_{q,\infty}\big)'$. Applying the $K$-interpolation  method restricted to the cone of non-increasing sequences to this operator we can conclude that for any $z=z^*$, 
\begin{equation*}
\sum_{\jj \in \Jj(m,n)}  | c_\jj(P) z_{\jj} | \le \sqrt{(1+r')}Cm^{e^{r'}}\|z\|_{\ell_{q,\infty}}^{m-2}\|z\|_{\ell_{q,2}}^2 \Vert P \Vert_{\mathcal P(^m\ell_r^n)}  \le C(1+\varepsilon)^m\Vert P \Vert_{\mathcal P(^m\ell_r^n)} \Vert z \Vert_{\ell_q^n}^{m}.
\end{equation*}
Therefore, by \eqref{basta la desi para z*}, we have proved our claim. \\

With some extra work it can proved, in a similar way, that given any $s\ge1$ and $\varepsilon>0$, there exist some $m_0$
and some $C>0$ such that for every $n\in \mathbb N$, all $m\ge m_0$ and every polynomial $P \in \mathcal P(^m \mathbb C^n)$ we have
 \begin{equation*}
     \sum_{\jj \in \Jj(m,n)}  \vert c_{\mathbf j}(P) z_{\jj} \vert \le C(1+\varepsilon)^m \Vert P \Vert_{\mathcal P(^m\ell_r^n)} \Vert z \Vert_{\ell_{q,s}^n}^{m}.
\end{equation*}
\end{remark}

\section{Some consequences}\label{aplicaciones}
We now provide several consequences of the results obtained in the previous sections.

\subsection{Mixed unconditionality for spaces of $\boldsymbol{m}$-homogeneous polynomials}
Let us recall that, if  $(P_i)_{i \in \Lambda}$ is a Schauder basis of $\Pp(^m \CC^n)$, for $ 1 \leq  r,s \leq \infty$ and $ n,m \in \NN$, the mixed unconditional basis constant $ \chi_{r,s}((P_i)_{i \in \Lambda})$ is defined as the best constant $C > 0$ such that 
\[
\| \sum_{ i \in \Lambda} \theta_i c_i  P_i \|_{\mathcal{P}(^m \ell_s^n)} \leq C \| \sum_{ i \in \Lambda}  c_i P_i \|_{\mathcal{P}(^m \ell_r^n)},
\]
for every $P = \displaystyle\sum_{ i \in \Lambda}  c_i P_i \in \mathcal{P}(^m \mathbb{C}^n)$ and every choice of complex numbers $(\theta_i)_{i \in \Lambda}$ of modulus one. Once we have this, the $(r,s)$-mixed unconditional constant of $\mathcal{P}(^m \mathbb{C}^n)$ is defined as
\[
\chi_{r,s}(\mathcal{P}(^m \mathbb{C}^n)) := \inf\{\chi_{p,q}((P_i)_{i \in \Lambda}) :  (P_i)_{i \in \Lambda} \text{ basis for } \mathcal{P}(^m \mathbb{C}^n)\}.
\]
This notion was introduced by Defant, Maestre and Prengel in \cite[Section 5]{defant2009domains}.\\

In \cite{galicer2016sup} the exact asymptotic asymptotic growth of the mixed-$(r,s)$ unconditional constant as $n$ tends to infinity was computed for many values of $p$ and $q$'s.
To achieve this the authors  proved that 
\[
\chi_{r,s}(\mathcal{P}(^m \mathbb{C}^n)) \sim \chi_{r,s}\big((z_{\mathbf j})_{\mathbf j \in \mathcal J(m,n)} \big).
\]
We complete the result  given in \cite[Theorem~ 3.4]{galicer2016sup} by providing the exact asymptotic asymptotic growth for the remaining cases. In this way we have the behaviour of $\chi_{p,q}(\mathcal{P}(^m \mathbb{C}^n))$ for \emph{every} $1 \leq p, q \leq \infty$.

\begin{theorem}\label{cte incond mon}
For each $m \in \mathbb N$ we have
\[
\begin{cases}  \; \chi_{r,s}(\mathcal{P}(^m \mathbb{C}^n)) \sim 1 & \text{ for } (I): \; [\frac{1}{r} + \frac{m-1}{2m} \leq \frac{1}{s} \wedge \frac{1}{r} \leq \frac{1}{2} ] \text{ or } [ \frac{m-1}{m} + \frac{1}{mr} < \frac{1}{s} \wedge \frac{1}{2} \leq \frac{1}{r} ], \\
\;\chi_{r,s}(\mathcal{P}(^m \mathbb{C}^n)) \sim n^{m (\frac{1}{r}-\frac{1}{s}+\frac{1}{2}) - \frac{1}{2}} & \text{ for } (II) \; \; [ \frac{1}{r} + \frac{m-1}{2m} \geq \frac{1}{s} \wedge \frac{1}{r} \leq \frac{1}{2}  ] ,\\
\; \chi_{r,s}(\mathcal{P}(^m \mathbb{C}^n)) \sim n^{(m-1)(1-\frac{1}{s}) + \frac{1}{r}  -\frac{1}{s}} & \text{ for } (III) \; : \; [ 1 - \frac{1}{m} + \frac{1}{mr} \geq \frac{1}{s} \; \wedge \; \frac{1}{2} < \frac{1}{r} <1 ]. \\
\end{cases}
\]
\end{theorem}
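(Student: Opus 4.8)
The starting point is the reduction recorded above, $\chi_{r,s}(\mathcal P(^m\mathbb C^n))\sim\chi_{r,s}\big((z_{\mathbf j})_{\mathbf j\in\mathcal J(m,n)}\big)$ from \cite{galicer2016sup}. Taking the supremum over the unimodular multipliers in the definition of the unconditional constant of the monomial basis turns the right-hand side into
\[
\chi_{r,s}\big((z_{\mathbf j})_{\mathbf j}\big)=\sup_{P\neq 0}\ \frac{1}{\|P\|_{\mathcal P(^m\ell_r^n)}}\sup_{\|w\|_{\ell_s^n}\le 1}\ \sum_{\mathbf j\in\mathcal J(m,n)}|c_{\mathbf j}(P)|\,|w_{\mathbf j}|,
\]
so the theorem amounts to two-sided estimates for this quotient, which I would establish region by region. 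The cases with $\frac1r\le\frac12$ — that is, region $(II)$ together with the first alternative of region $(I)$ — are covered by \cite[Theorem~3.4]{galicer2016sup} (or by the same techniques: Bohnenblust--Hille and Hardy--Littlewood type summability of coefficients for the upper bounds, Kahane--Salem--Zygmund polynomials for the lower ones), so the genuinely new content is the range $1<r<2$, which is exactly region $(III)$ together with the second alternative of region $(I)$. For this range the machinery of Section~\ref{mhomogeneous} does the job, and I would use freely the elementary facts $\sum_{\mathbf j\in\mathcal J(m,n)}|w_{\mathbf j}|^{t}\le\|w\|_{\ell_t^n}^{tm}$, $\|w\|_{\ell_a^n}\le n^{1/a-1/b}\|w\|_{\ell_b^n}$ for $a\ge b$, and $\sum_{\alpha\in\Lambda(m,n)}\frac{m!}{\alpha!}=n^m$.

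For the \emph{upper} estimates when $1<r\le 2$ the key input is \eqref{hyper}: $\sum_{\mathbf j\in\mathcal J(m,n)}|c_{\mathbf j}(P)|\,|w_{\mathbf j}|\le m^{d_r}\|P\|_{\mathcal P(^m\ell_r^n)}\|w\|_{\ell_q^n}^{m}$ for every $w\in\mathbb C^n$, with $q=(mr')'$ and $\frac1q=\frac{m-1}{m}+\frac1{mr}$. In the second alternative of region $(I)$ one has $s<q$, so $\|w\|_{\ell_q^n}\le\|w\|_{\ell_s^n}$ and the quotient is $\le m^{d_r}$, i.e.\ $\chi_{r,s}\sim 1$ (the lower bound $\chi_{r,s}\gtrsim 1$ being trivial from a single monomial). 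In region $(III)$ one has $q\le s$, hence $\|w\|_{\ell_q^n}\le n^{1/q-1/s}\|w\|_{\ell_s^n}$, so $\chi_{r,s}\lesssim_m n^{m(1/q-1/s)}$; and since $m(\frac1q-\frac1s)=(m-1)(1-\frac1s)+\frac1r-\frac1s$ after simplification, this is precisely the exponent claimed in $(III)$.

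For the matching \emph{lower} estimate in region $(III)$ I would test the quotient on the Boas--Bayart polynomial of \eqref{polinomios de Bayart}. For $P=\sum_{\alpha\in\Lambda(m,n)}\varepsilon_\alpha\frac{m!}{\alpha!}z^\alpha$ one has $|c_{\mathbf j}(P)|=\frac{m!}{\alpha(\mathbf j)!}$ and $\|P\|_{\mathcal P(^m\ell_r^n)}\le C_r(\log m\cdot m!)^{1-1/r}n^{1-1/r}$, whereas evaluating the inner sum at $w=n^{-1/s}(1,\dots,1)$ (which has $\|w\|_{\ell_s^n}=1$) gives $\sum_{\mathbf j}|c_{\mathbf j}(P)|\,|w_{\mathbf j}|=n^{-m/s}\sum_{\alpha\in\Lambda(m,n)}\frac{m!}{\alpha!}=n^{m(1-1/s)}$. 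Hence $\chi_{r,s}\gtrsim_m n^{m(1-1/s)-(1-1/r)}=n^{(m-1)(1-1/s)+1/r-1/s}$, which matches the upper bound and settles region $(III)$. A Kahane--Salem--Zygmund polynomial $\sum_{\mathbf j}\varepsilon_{\mathbf j}z_{\mathbf j}$ only satisfies $\|P\|_{\mathcal P(^m\ell_r^n)}\lesssim_m\sqrt n$ for $r\le 2$, and $\sqrt n>n^{1-1/r}$ when $r<2$, so it yields a strictly weaker lower bound here — the Boas--Bayart polynomials are genuinely needed. Finally, the second alternative of region $(I)$ with $1<r<2$ is immediate: the upper bound of the previous paragraph gives $\chi_{r,s}\le m^{d_r}$, and $\chi_{r,s}\gtrsim 1$ by testing on a single monomial.

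I expect the main difficulty to be bookkeeping rather than new ideas: one must verify that regions $(I)$--$(III)$ are covered \emph{exactly} by the tools invoked — that the two alternatives of $(I)$ (the ranges $r\ge 2$ and $1<r\le 2$) together exhaust it, that the boundary hyperplane $\frac1s=\frac1q$ separates the constant regime from the power regime as asserted, and that every exponent matches after simplification — and one must be careful with the normalisation of the monomial basis (the $z_{\mathbf j}$ basis underlying the reduction of \cite{galicer2016sup} versus the multinomially weighted family $\frac{m!}{\alpha!}z_{\mathbf j}$ produced by \eqref{polinomios de Bayart}), using \eqref{basta la desi para z*} and Corollary~\ref{cor: familias de reordenamiento} to pass to decreasing rearrangements where convenient. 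The genuinely delicate corner — and the reason it is cleaner to lean on \cite[Theorem~3.4]{galicer2016sup} for $\frac1r\le\frac12$ — is the sharp upper bound at and near $r=2$: there the coefficients of an $m$-homogeneous polynomial on $\ell_2^n$ are \emph{not} uniformly $\ell_t$-summable for any $t$, so the crude H\"older route is lossy and one must instead exploit the Hilbertian structure of $B_{\ell_2^n}$ more carefully.
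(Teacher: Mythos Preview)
Your argument is correct and, for the one thing the paper actually proves here --- the upper bound in region $(III)$ --- it is verbatim the paper's approach: apply \eqref{hyper} from Theorem~\ref{teo ellq} and then $\|w\|_{\ell_q^n}\le n^{1/q-1/s}\|w\|_{\ell_s^n}$, noting $m(\tfrac1q-\tfrac1s)=(m-1)(1-\tfrac1s)+\tfrac1r-\tfrac1s$. The paper defers all of regions $(I)$ and $(II)$ \emph{and} the lower bound in $(III)$ to \cite[Theorem~3.4]{galicer2016sup}, whereas you supply that lower bound yourself via the Boas--Bayart polynomial of \eqref{polinomios de Bayart} tested at $w=n^{-1/s}(1,\dots,1)$; that computation is correct and is indeed the natural extremiser.
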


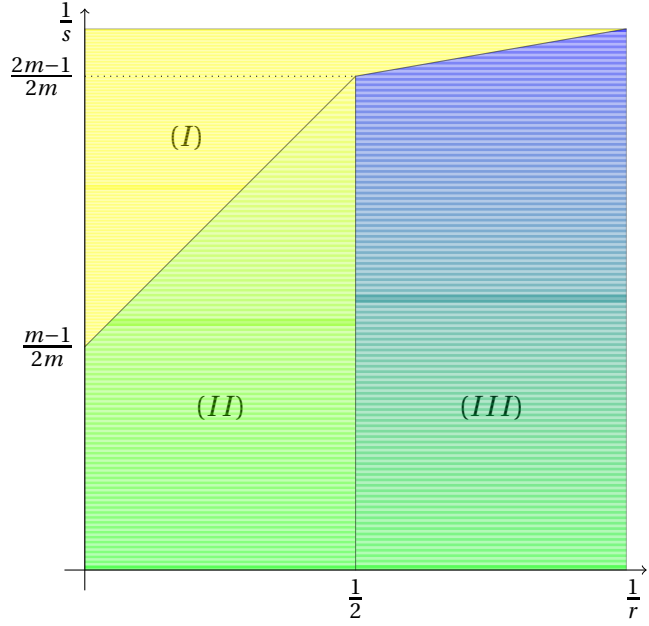
\begin{figure} \label{Figura incondicionalidad}
\begin{center}
\begin{tikzpicture}[scale=0.9]

	\draw (1.5,6.4) node {$(I)$};
  \path[draw, shade, top color=yellow, bottom color=yellow, opacity=.3]
     (4,7.3) node[below] {$ $}  -- (8, 8) -- (0, 8) -- (0,3.3) -- cycle;

    \draw (2,2.4) node {$(II)$};
  \path[draw, shade, top color=yellow, bottom color=green, opacity=.3]
     (0,0) node[below] {$ $}  -- (0, 3.3) -- (4,7.3) -- (4,0) -- cycle;

	\draw (6,2.4) node {$(III)$};
  \path[draw, shade, top color=blue, bottom color=green, opacity=.3]
    (4,7.3) node[below] {$ $} -- (8,8) -- (8,0) node[below] {$ $}
     -- (4, 0) -- cycle;


\draw[dotted] (0,7.3) -- (4,7.3); 

\draw (4,0) node[below] {$\frac12$};
\draw (0,7.3) node[left] {$\frac{2m-1}{2m}$};
\draw (0,3.3) node[left] {$\frac{m-1}{2m}$};

\draw (8.1, 0) node[below] {$\frac1{r} $};
  \draw[->] (-0.3,0) -- (8.3, 0);
\draw (0,8.1) node[left] {$\frac1{s} $};
  \draw[->] (0,-0.3) -- (0, 8.3);
 \end{tikzpicture}
\end{center}

\caption{Graphical overview of the mixed unconditional constant described in Theorem~\ref{cte incond mon}.}

\end{figure}

\begin{proof}
The behaviour of $\chi_{r,s}(\mathcal{P}(^m \mathbb{C}^n))$ in regions $(I)$ and $(II)$ was already given in \cite[Theorem~3.4]{galicer2016sup}. We now deal with $(III)$. By
Theorem~\ref{teo ellq} we know that $\ell_{q} \subset \mon \Pp(^m \ell_r).$ Thus, for every polynomial $P(z)= \sum_{\vert \alpha\vert = m} c_{\alpha} z^{\alpha} \in \Pp(^m \mathbb{C}^{n})$ we have
\begin{equation}\label{desigualdad por el mon}
\sum_{\vert \alpha\vert = m} \vert c_{\alpha} z^{\alpha} \vert \leq C^m \Vert z \Vert_{\ell_{q}}^m \Vert P \Vert_{\Pp(^m \ell_r)},     
\end{equation}
where  $q:=(mr')'$. Since  
\begin{equation} \label{comparacion de las normas}
 \Vert z \Vert_{\ell_{q}} \leq n^{\frac{1}{q}-\frac{1}{s}}\Vert z \Vert_{\ell_{s}}    ,
\end{equation}
combining \eqref{desigualdad por el mon} and \eqref{comparacion de las normas} yields
\begin{align}
\chi_{r,s}(\mathcal{P}(^m \mathbb{C}^n)) \leq  n^{(\frac{1}{q}-\frac{1}{s})m} = n^{(1-\frac{1}{mr'}-\frac{1}{s})m} =  n^{m(1-\frac{1}{s})-\frac{1}{r'}}=n^{(m-1)(1-\frac{1}{s}) + \frac{1}{r}  -\frac{1}{s}}.
\end{align}
\end{proof}
\subsection{Mixed Bohr radius}

Let $K(B_{\ell_p^n},B_{\ell_q^n}) $ be the $n$-dimensional $(p,q)$-Bohr radius for holomorphic functions on $\CC^n$. That is, $K(B_{\ell_p^n},B_{\ell_q^n}) $ denotes the greatest number $r\geq 0$ such that for every entire function $f(z)=\sum_{\alpha} a_{\alpha} z^{\alpha}$ in $n$-complex variables, we have the following (mixed) Bohr-type  inequality
\[
\sup_{z \in  r \cdot B_{\ell_q^n}} \sum_{\alpha} \vert a_{\alpha} z^{\alpha} \vert \leq \sup_{z \in  B_{\ell_p^n}} \vert f(z) \vert.
\]
The exact asymptotic growth of $K(B_{\ell_p^n},B_{\ell_q^n}) $ with $n$ was given in \cite[Theorem~1.2]{galicer2017mixed}. More precisely, $K(B_{\ell_p^n},B_{\ell_1^n})  \sim 1$ for  every $1 \le p \le \infty$, and for $1 \leq p, q \leq \infty$, with $q \neq 1$, 
\[
K(B_{\ell_p^n},B_{\ell_q^n})  \sim \begin{cases}
		1 & \text{ if (I): } 2 \leq p \leq \infty \; \wedge \; \frac{1}{2} + \frac{1}{p} \le \frac{1}{q},\\
		\frac{\sqrt{\log(n)}} {n^{\frac{1}{2} + \frac{1}{p} - \frac{1}{q}}} & \text{ if (II): } 2 \leq p \leq \infty \; \wedge \; \frac{1}{2} + \frac{1}{p} > \frac{1}{q}, \\
		\frac{\log(n)^{1 - \frac{1}{p}}}{n^{1-\frac{1}{q}}}  & \text{ if (III): } 1 \leq p  \leq 2. \\
		\end{cases}
\]

As a consequence of our result we can give an alternative proof of the lower bounds for $K(B_{\ell_p^n},B_{\ell_q^n}) $ for the case $1 \leq p \leq 2$ (and every $1 \leq q \leq \infty$). It should be noted that this is the most complicated part of  \cite[Theorem~1.2]{galicer2017mixed}. Let us see how.\\

By \cite[Theorem~5.1]{defant2009domains} and Lemma~\ref{lem: 1er result H_infty}, there is a constant $C:=C(p)>0$ such that for every polynomial $P$ in $n$ complex variables we have
\begin{equation}\label{desig radio de bohr}
\sum_{\mathbf j\in\mathcal J(m,n)}  \vert c_{\mathbf j}(P) z_{\mathbf j} \vert  \leq C^m \Vert z \Vert_{(m_{\Psi_p})_n}^m \Vert P \Vert_{\mathcal P(^m \ell_p^n)},
\end{equation}
where $ (m_{\Psi_p})_n $ is defined as the quotient space induced by the mapping
\begin{align*}
\pi_n : m_{\Psi_p} &\rightarrow \CC^n \\
		x & \mapsto (x_1, \ldots, x_n).
\end{align*}
Note that there is a constant $D=D(p,q)>0$ such that $\Vert z \Vert_{(m_{\Psi_p})_n} \leq D \frac{n^{1-\frac{1}{q}}}{\log(n)^{1-\frac{1}{p}}} \Vert z \Vert_{\ell_q^n}.$ Therefore, by \eqref{desig radio de bohr} we have
 \[
\sum_{\mathbf j\in\mathcal J(m,n)}  \vert c_{\mathbf j}(P) z_{\mathbf j} \vert  \leq (CD)^m \left(\frac{n^{1-\frac{1}{q}}}{\log(n)^{1-\frac{1}{p}}} \right)^m  \Vert z \Vert_{\ell_q^n}^m \Vert P \Vert_{\mathcal P(^m \ell_p^n)},
\]
This implies that $\chi_{p,q}(\mathcal{P}(^m \mathbb{C}^n))^{1/m} \ll \frac{n^{1-\frac{1}{q}}}{\log(n)^{1-\frac{1}{p}}}$. It should be noted that here is important to have  control of the growth of the $(p,q)$-mixed unconditional constant also in terms of $m$  (the homogeneity degree), contrary to problem treated in the previous subsection. ~
The result now follows using that (see \cite[Lemma~2.2.]{galicer2017mixed}) for every $n \in \zN$ and $1 \le p , q \le \infty$ we have
		\[
		 K(B_{\ell_p^n},B_{\ell_q^n})  \sim \frac{1}{\sup_{ m \ge 1}  \chi_{p,q}(\mathcal{P}(^m \mathbb{C}^n))^{1/m}}.
		\]

\subsection{Multipliers}

A sequence $(a_n)_{n \in \mathbb{N}}$ is a multiplier for $\mon \mathcal{P}(^m \ell_r)$ if
$$
(a_n)_{n \in \mathbb N} \cdot \ell_r \subset \mon \mathcal{P}(^m \ell_r),
$$
where the product $(a_n)_{n \in \mathbb N} \cdot \ell_r$ is just the coordinate-wise multiplication. Let  $p=(p_1,p_2, \dots)$ be the sequence of the prime numbers.
It is well-known  that for $r \ge 2$, the sequence $\frac{1}{
			p^{\frac{m-1}{2m}}
		}$ is a multiplier for  $\mon \mathcal{P}(^m \ell_r)$ (this can be as an immediate consequence of  \cite[Theorem~5.1 (3) ]{bayart2016monomial}).

For  $1 <r< 2$ in \cite[Theorem~5.3.]{bayart2016monomial} prove this  up to an $\varepsilon$, showing that for each $m$ and every $\veps>\frac 1r$
\begin{equation}\label{multiplier primo BDS}
\frac{1}{p^{\sigma_m}\big(\log(p)\big)^{\veps}}\cdot \ell_r\subset \mon\mathcal P(^m\ell_r),    
\end{equation}
where  $\sigma_m=\frac{m-1}{m}\left(1-\frac 1r\right)$.
As a consequence of our results, we can improve this, showing that, for $1 < r \le 2 $, even the sequence $(\frac{1}{n^{\sigma_m}})_{n \in \mathbb N}$ is a multiplier for $\mon \mathcal{P}(^m \ell_r)$. 

\begin{theorem}
For $1 <r< 2$ and  $m\geq 3$ put $\sigma_m=\frac{m-1}{m}\left(1-\frac 1r\right)$. Then, 
\[
\big(\frac{1}{n^{\sigma_m}}\big)_n\cdot \ell_r\subset \mon\mathcal P(^m\ell_r),
\]
and $\sigma_{m}$ is best possible.
\end{theorem}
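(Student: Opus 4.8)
The statement has two parts: the inclusion $\big(\frac{1}{n^{\sigma_m}}\big)_n\cdot \ell_r\subset \mon\mathcal P(^m\ell_r)$, and the optimality of the exponent $\sigma_m$. I would establish the inclusion as a direct consequence of Theorem~\ref{teo: main poli}, which tells us that $\ell_{q,\frac{m}{\log m}} \subset \mon\mathcal P(^m\ell_r)$ for $m \ge 5$ (and the analogous statements $\ell_{q,2}\subset\mon\mathcal P(^3\ell_r)$, $\ell_{q,\frac{3+\sqrt5}{2}}\subset\mon\mathcal P(^4\ell_r)$), where $q=(mr')'$. The key computation is: given $w \in \ell_r$, set $z_n = \frac{w_n}{n^{\sigma_m}}$, and show that $z \in \ell_{q,s}$ for the relevant Lorentz exponent $s$. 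Since $\mon\mathcal P(^m\ell_r)$ is a rearrangement family (Corollary~\ref{cor: familias de reordenamiento}), it suffices to bound the decreasing rearrangement of $z$. Using that $z_n^* \le \frac{w_n^*}{n^{\sigma_m}}$ (the rearrangement of a pointwise product of a sequence and a decreasing positive weight), and Hölder's inequality with exponents $r$ and $r'$, one gets
\[
\sum_{n=1}^N z_n^* n^{\frac{1}{q}-\frac{1}{s'}} \cdots
\]
which after plugging in $\sigma_m = \frac{m-1}{m}(1-\frac1r) = 1 - \frac1q$ (using $\frac1q = \frac1r + \frac{m-1}{m}\cdot\frac1{r'}$, so $1-\frac1q = \frac1{r'} - \frac{m-1}{mr'} = \frac{1}{mr'}$... let me recompute: $\frac1q = 1 - \frac1{mr'}$, so $1-\frac1q = \frac1{mr'}$; meanwhile $\sigma_m = \frac{m-1}{m}\cdot\frac1{r'}$) reveals that $z$ lies in $\ell_{q,r}$ with a uniform norm bound $\|z\|_{\ell_{q,r}} \lesssim \|w\|_{\ell_r}$. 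Since $r \le 2 \le \frac{m}{\log m}$ for $m\ge 5$ (and $r\le 2$, $r\le \frac{3+\sqrt5}{2}$ for $m=4$, $r\le 2$ for $m=3$), the embedding $\ell_{q,r}\hookrightarrow \ell_{q,s}$ holds, giving $z \in \ell_{q,s}\subset\mon\mathcal P(^m\ell_r)$.

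**Optimality.** For the sharpness of $\sigma_m$, I would argue that if $\sigma < \sigma_m$ then $\big(\frac1{n^\sigma}\big)_n\cdot\ell_r \not\subset\mon\mathcal P(^m\ell_r)$. The cleanest route is through the known upper inclusion $\mon\mathcal P(^m\ell_r)\subset\ell_{q,\infty}$ (from \cite[Theorem~5.1]{bayart2016monomial} and \cite[Example~4.6]{defant2009domains}, recalled at the start of Section~\ref{mhomogeneous}). If $\big(\frac1{n^\sigma}\big)_n\cdot\ell_r\subset\mon\mathcal P(^m\ell_r)\subset\ell_{q,\infty}$, then for every $w\in\ell_r$ the sequence $\big(\frac{w_n}{n^\sigma}\big)_n$ must lie in $\ell_{q,\infty}$, i.e. $\sup_n n^{1/q}\big(\frac{w_n}{n^\sigma}\big)^* < \infty$. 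Choosing $w_n = n^{-1/r}(\log(n+1))^{-1}$ (which is in $\ell_r$), the product behaves like $n^{-\sigma - 1/r}(\log n)^{-1}$, whose $\ell_{q,\infty}$-quasinorm is finite iff $\frac1q \le \sigma + \frac1r$, i.e. iff $\sigma \ge \frac1q - \frac1r = \frac1{r'} - \frac{m-1}{mr'} \cdot$... here I must double-check: $\frac1q - \frac1r = 1 - \frac1{mr'} - \frac1r = \frac1{r'} - \frac1{mr'} = \frac{m-1}{mr'} = \sigma_m$. So the threshold is exactly $\sigma_m$, and taking $\sigma<\sigma_m$ with a suitable logarithmic sharpening of the witness sequence $w$ produces a $w \in \ell_r$ with $\big(\frac{w_n}{n^\sigma}\big)_n \notin \ell_{q,\infty}$, hence not in $\mon\mathcal P(^m\ell_r)$.

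**Main obstacle.** The inclusion part is essentially bookkeeping with Lorentz-space exponents and the arithmetic identity $\sigma_m = 1-\frac1q = \frac1q-\frac1r$; the only subtlety is making sure $r$ is small enough relative to the Lorentz exponent $s$ appearing in Theorem~\ref{teo: main poli} so that the embedding $\ell_{q,r}\hookrightarrow\ell_{q,s}$ is available — this is where the hypothesis $1<r\le 2$ is used, together with $m\ge 3$. The genuinely delicate point is the optimality: one has to exhibit, for each $\sigma<\sigma_m$, an explicit $w\in\ell_r$ witnessing failure, and the membership $\mon\mathcal P(^m\ell_r)\subset\ell_{q,\infty}$ only rules out sequences outside $\ell_{q,\infty}$ — if $\big(\frac1{n^\sigma}\big)_n\cdot\ell_r$ happened to sit inside $\ell_{q,\infty}$ for some $\sigma$ slightly below $\sigma_m$ this argument would not suffice, so one must verify that $\big(\frac1{n^\sigma}\big)_n\cdot\ell_r \not\subset\ell_{q,\infty}$ precisely when $\sigma<\sigma_m$, which is the Hölder-sharpness computation sketched above. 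I expect the write-up of the witness sequence and the verification that its product leaves $\ell_{q,\infty}$ (with the right logarithmic factors) to be the one place requiring genuine care.
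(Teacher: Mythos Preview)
Your overall strategy matches the paper's exactly: deduce the inclusion from Theorem~\ref{teo: main poli} by showing that $\big(\tfrac{1}{n^{\sigma_m}}\big)_n\cdot \ell_r \subset \ell_{q,r}$ (and then using $\ell_{q,r}\hookrightarrow \ell_{q,s}$ since $r<2\le s$ for every $m\ge 3$), and deduce optimality from the upper inclusion $\mon\mathcal P(^m\ell_r)\subset \ell_{q,\infty}$ via an explicit witness in $\ell_r$. The optimality argument is fine; your witness $w_n=n^{-1/r}(\log(n+1))^{-1}$ works just as well as the paper's $n^{-1/r}(\log(n+1))^{-2/r}$, and the arithmetic identity $\sigma_m=\tfrac1q-\tfrac1r$ is correct.

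There is, however, one genuine slip in the inclusion step. The pointwise inequality you invoke,
\[
z_n^{*}\le \frac{w_n^{*}}{n^{\sigma_m}}\qquad\text{where } z_n=\frac{w_n}{n^{\sigma_m}},
\]
is \emph{false} in general: take $w=(1,2,0,0,\ldots)$ and $a_n=1/n$, so $z=(1,1,0,\ldots)$, $z^{*}=(1,1,0,\ldots)$, but $w^{*}_n a_n=(2,\tfrac12,0,\ldots)$, and $z^{*}_2=1>\tfrac12$. What \emph{is} true (and is exactly what the paper uses, via Lemma~\ref{reordenamiento}) is the partial-sum version
\[
\sum_{k=1}^{n} z_k^{*}\ \le\ \sum_{k=1}^{n}\frac{w_k^{*}}{k^{\sigma_m}}\qquad(n\ge 1).
\]
This is why the paper passes through the \emph{maximal} Lorentz norm $\Vert\cdot\Vert_{\ell_{q,r}}^{*}$ (which is built from these partial sums and is equivalent to $\Vert\cdot\Vert_{\ell_{q,r}}$, see \cite[Lemma~4.5]{bennett1988interpolation}) rather than the quasi-norm directly: one gets
\[
\Big\Vert\Big(\tfrac{w_n}{n^{\sigma_m}}\Big)_n\Big\Vert_{\ell_{q,r}}^{*}
\ \le\ \Big\Vert\Big(\tfrac{w_n^{*}}{n^{\sigma_m}}\Big)_n\Big\Vert_{\ell_{q,r}}^{*}
\ \sim\ \Big\Vert\Big(\tfrac{w_n^{*}}{n^{\sigma_m}}\Big)_n\Big\Vert_{\ell_{q,r}}
=\Vert w\Vert_{\ell_r},
\]
the last equality because $(w_n^{*}/n^{\sigma_m})_n$ is already decreasing and $\sigma_m=\tfrac1q-\tfrac1r$. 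Once you replace the incorrect pointwise claim by this Hardy--Littlewood partial-sum bound and the maximal norm, your plan goes through verbatim and coincides with the paper's argument.
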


\begin{proof}
As a consequence of Theorem~\ref{teo: main poli} we know that $\ell_{q,r} \subset \mon\mathcal P(^m\ell_r)$, thus 
to prove the result it is sufficient to see that if $z \in \ell_r$ then, $\big(\frac{1}{n^{\sigma_m}}\big)_n \cdot z \in \ell_{q,r}.$ 
Suppose that $z\in  \ell_r$ is an arbitrary element (not necessarily equal to $z^*$).
Since $r>q$ we know that the norm $\Vert \cdot \Vert_{\ell_{q,r}}$ is equivalent to the following maximal norm (see \cite[Lemma~4.5]{bennett1988interpolation})
\[
\Vert w \Vert_{\ell_{q,r}}^* = \left( \sum_{n=1}^\infty n^{\frac{r}{q} -1} \left( \frac{1}{n} \sum_{k=1}^n w_k^* \right)^r  \right)^{1/r}.
\]
Then, if $w= \left(\frac{z_n}{n^{\sigma_m}}\right)_n$, by the Hardy-Littlewood rearrangement inequality (Lemma~\ref{reordenamiento}) it is easy to see that  
\[
\sum_{k=1}^n w^*_k \leq \sum_{k=1}^n z_k^* \frac{1}{k^\sigma}
\]
for every $n \in \mathbb N$. Then
\begin{multline*}
\left\Vert \left(\frac{z_n}{n^{\sigma_m}}\right)_n \right\Vert_{\ell_{q,r}} \sim \left\Vert\left(\frac{z_n}{n^{\sigma_m}}\right)_n \right\Vert_{\ell_{q,r}}^* \leq  \left( \sum_{n=1}^\infty n^{\frac{r}{q} -1} \left(\frac{1}{n} \sum_{k=1}^n z_k^* \frac{1}{k^\sigma} \right)^r  \right)^{1/r} \\
 = \left\Vert \left(\frac{z_n^*}{n^{\sigma_m}}\right)_n \right\Vert_{\ell_{q,r}}^* \sim \left\Vert\left(\frac{z_n^*}{n^{\sigma_m}}\right)_n \right\Vert_{\ell_{q,r}} 
 = \left(\sum_{n=1}^\infty \left((\frac{z_n^*}{n^{\sigma_m}})^* n^{\frac{1}{q}-\frac{1}{r}} \right)^r\right)^{1/r}  = \Vert z \Vert_{\ell_r} < \infty,
\end{multline*}
where, in the last equality, we have used the fact that $\sigma_m = \frac{1}{q}-\frac{1}{r}$.\\
To see that the exponent is optimal take, as always, $q=(mr')'$. Now, if $(z_n)_n=\left(\frac{1}{n^{1/r} \log(n+1)^{2/r}}\right)_n \in \ell_r$ for every $\varepsilon>0$ it is easy to check that the sequence $\big( \frac{z_n}{n^{\sigma_m-\varepsilon}} \big)_n \notin \ell_{q,\infty} \supset \mon \Pp (^m\ell_r).$
\end{proof}

For $m = 2$ we cannot show that the sequence is $\big(\frac{1}{n^{\sigma_2}}\big)_n$ is a multiplier for $\mon\mathcal P(^2\ell_r)$ but using the fact that $\ell_q \subset \mon\mathcal P(^2\ell_r) $, Theorem~\ref{teo: main poli}, it is easy to see that we have the inclusion  
\begin{equation*}
\frac{1}{p^{\sigma_2}\big(\log(p)\big)^{\veps}}\cdot \ell_r\subset \mon\mathcal P \left( ^2\ell_r \right),    
\end{equation*}
for \emph{every} $\varepsilon>0$ extending \cite[Theorem~5.3.]{bayart2016monomial}  (see also \eqref{multiplier primo BDS}). We leave the details for the reader.

\newcommand{\etalchar}[1]{$^{#1}$}


\begin{thebibliography}{DGMPG08}

\bibitem[Bay12]{bayart2012maximum}
Fr{\'e}d{\'e}ric Bayart.
\newblock Maximum modulus of random polynomials.
\newblock {\em The Quarterly Journal of Mathematics}, 63(1):21--39, 2012.

\bibitem[BDF{\etalchar{+}}17]{bayart2017multipliers}
Fr{\'e}d{\'e}ric Bayart, Andreas Defant, Leonhard Frerick, Manuel Maestre, and
  Pablo Sevilla-Peris.
\newblock Multipliers of dirichlet series and monomial series expansions of
  holomorphic functions in infinitely many variables.
\newblock {\em Mathematische Annalen}, 368(1-2):837--876, 2017.

\bibitem[BDS]{bayart2016monomial}
Fr{\'e}d{\'e}ric Bayart, Andreas Defant, and Sunke Schl{\"u}ters.
\newblock Monomial convergence for holomorphic functions on $\ell_r$.
\newblock {\em Journal d'Analyse Math\'ematique, in press}.

\bibitem[BL76]{BerLof76}
J{\"o}ran Bergh and J{\"o}rgen L{\"o}fstr{\"o}m.
\newblock {\em Interpolation spaces. {A}n introduction}.
\newblock Springer-Verlag, Berlin, 1976.
\newblock Grundlehren der Mathematischen Wissenschaften, No. 223.

\bibitem[Boa00]{boas2000majorant}
Harold~P Boas.
\newblock Majorant series.
\newblock {\em J. Korean Math. Soc}, 37(2):321--337, 2000.

\bibitem[BS88]{bennett1988interpolation}
Colin Bennett and Robert~C Sharpley.
\newblock {\em Interpolation of operators}, volume 129.
\newblock Academic press, 1988.

\bibitem[CM96]{cerda1996interpolation}
Joan Cerda and Joaquim Martin.
\newblock Interpolation of operators on decreasing functions.
\newblock {\em Mathematica Scandinavica}, pages 233--245, 1996.

\bibitem[DGMPG08]{defant2008bohr}
Andreas Defant, Domingo Garc{\'\i}a, Manuel Maestre, and David
  P{\'e}rez-Garc{\'\i}a.
\newblock Bohr's strip for vector valued dirichlet series.
\newblock {\em Mathematische Annalen}, 342(3):533--555, 2008.

\bibitem[DGMSP19]{defant2019libro}
Andreas Defant, Domingo Garc\'{i}a, Manuel Maestre, and Pablo Sevilla-Peris.
\newblock {\em Dirichlet Series and Holomorphic Funcions in High Dimensions},
  volume~37 of {\em New Mathematical Monographs}.
\newblock Cambridge University Press, 2019.

\bibitem[Din99]{dineen1999complex}
Se\'{a}n Dineen.
\newblock {\em Complex analysis on infinite dimensional spaces}.
\newblock Springer Monographs in Mathematics. London: Springer, 1999.

\bibitem[DM06]{defant2006norms}
Andreas Defant and Carsten Michels.
\newblock Norms of tensor product identities.
\newblock {\em Note di Matematica}, 25(1):129--166, 2006.

\bibitem[DMP09]{defant2009domains}
Andreas Defant, Manuel Maestre, and Christopher Prengel.
\newblock Domains of convergence for monomial expansions of holomorphic
  functions in infinitely many variables.
\newblock {\em Journal f{\"u}r die reine und angewandte Mathematik (Crelles
  Journal)}, 2009(634):13--49, 2009.

\bibitem[GMMa]{galicer2017mixed}
Daniel Galicer, Mart{\'\i}n Mansilla, and Santiago Muro.
\newblock Mixed bohr radius in several variables.
\newblock {\em Transactions of the AMS, in press}.

\bibitem[GMMb]{galicer2016sup}
Daniel Galicer, Mart{\'\i}n Mansilla, and Santiago Muro.
\newblock The sup-norm vs. the norm of the coefficients: equivalence constants
  for homogeneous polynomials.
\newblock {\em Mathematische Nachrichten, in press}.

\bibitem[HLP52]{hardy1952inequalities}
GH~Hardy, JE~Littlewood, and G~Polya.
\newblock {\em Inequalities}.
\newblock Cambridge Univ Press, Cambridge, 1952.

\bibitem[Lem99]{lempert1999dolbeault}
L{\'a}szl{\'o} Lempert.
\newblock The dolbeault complex in infinite dimensions ii.
\newblock {\em Journal of the American Mathematical Society}, 12(3):775--793,
  1999.

\bibitem[Muj86]{mujica1986}
Jorge Mujica.
\newblock {\em Complex analysis in {B}anach spaces}, volume 120 of {\em
  North-Holland Mathematics Studies}.
\newblock North-Holland Publishing Co., Amsterdam, 1986.
\newblock Holomorphic functions and domains of holomorphy in finite and
  infinite dimensions, Notas de Matem\'{a}tica [Mathematical Notes], 107.

\bibitem[Rya80]{ryan1980applications}
Raymond~A Ryan.
\newblock {\em Applications of topological tensor products to infinite
  dimensional holomorphy}.
\newblock PhD thesis, Trinity College Dublin., 1980.

\bibitem[Sag72]{sagher1972application}
Yoram Sagher.
\newblock An application of interpolation theory to fourier series.
\newblock {\em Studia Mathematica}, 41(2):169--181, 1972.

\bibitem[Sch15]{schluters2015unconditionality}
Sunke Casjen~Uphoff Schl{\"u}ters.
\newblock {\em Unconditionality in spaces of holomorphic functions}.
\newblock PhD thesis, Universit{\"a}t Oldenburg, 2015.

\bibitem[Toe13]{toeplitz1913ueber}
O~Toeplitz.
\newblock Ueber eine bei den dirichletschen reihen auftretende aufgabe aus der
  theorie der potenzreihen von unendlich vielen ver{\"a}nderlichen.
\newblock {\em Nachrichten von der Gesellschaft der Wissenschaften zu
  G{\"o}ttingen, Mathematisch-Physikalische Klasse}, 1913(3):417--432, 1913.

\end{thebibliography}
\end{document}